\newcommand{\dd}{\text{d}}
\def\sm{{\mathcal M}}
\newcommand\R{{\mathbb R}}
	\DeclareMathOperator{\supp}{supp}
	\DeclareMathOperator{\spn}{span}
	\DeclareMathOperator{\proj}{proj}
	\DeclareMathOperator{\ran}{ran}
	\newtheorem{lemma}{Lemma}[section]
	\newtheorem{prop}[lemma]{Proposition}
	\newtheorem{cor}{Corollary}[section]
	\newtheorem{theorem}{Theorem}
	\theoremstyle{remark}
	\newtheorem{remark}{Remark}[section]
\DeclareFontFamily{U}{mathx}{\hyphenchar\font45}
\DeclareFontShape{U}{mathx}{m}{n}{
      <5> <6> <7> <8> <9> <10>
      <10.95> <12> <14.4> <17.28> <20.74> <24.88>
      mathx10
      }{}
\DeclareSymbolFont{mathx}{U}{mathx}{m}{n}
\DeclareMathAccent{\widecheck}{0}{mathx}{"71}
\DeclareMathAccent{\wideparen}{0}{mathx}{"75}
\begin{document}
\title{Fourier decay of fractal measures on hyperboloids}

\author[Barron]{Alex Barron}
\address{Department of Mathematics, University of Illinois, Urbana, IL 61801, U.S.A.}
\email{aabarron@illinois.edu} 

\author[Erdo\u{g}an]{M. Burak Erdo\u{g}an}
\address{Department of Mathematics, University of Illinois, Urbana, IL 61801, U.S.A.}
\email{berdogan@illinois.edu}

\author[Harris]{Terence L.~J.~Harris}
\address{Department of Mathematics, University of Illinois, Urbana, IL 61801, U.S.A.}
\email{terence2@illinois.edu}

\maketitle

\begin{abstract}
	Let $\mu$ be an $\alpha$-dimensional probability measure. We prove new upper and lower bounds on the decay rate of hyperbolic averages of the Fourier transform $\widehat{\mu}$. More precisely, if $\mathbb{H}$ is a truncated hyperbolic paraboloid in $\R^d$ we study the optimal $\beta$ for which $$\int_{\mathbb{H}} |\widehat{\mu}(R\xi)|^2  \, d  \sigma (\xi)\leq C(\alpha, \mu) R^{-\beta}$$ for all $R > 1$. Our estimates for $\beta$ depend on the minimum between the number of positive and negative principal curvatures of $\mathbb{H}$; if this number is as large as possible our estimates are sharp in all dimensions. 
\end{abstract}

\section{Introduction}

 \footnote{2010 \textit{Mathematics Subject Classification}. Primary 42B37
}
Let $\sm_d$ be the space of  non-negative finite Borel measures supported in
 $B(0,1)\subset\R^d$. For $\alpha\in (0,d)$, the  $\alpha$-dimensional energy of $\mu\in\sm_d$ is defined via
\begin{equation}
I_\alpha(\mu):=\iint \frac{\dd\mu(x)\, \dd\mu(y)}{|x-y|^{\alpha}}=
c_{\alpha,d} \int\frac{|\widehat{\mu}(\xi)|^2}{|\xi|^{d-\alpha}} \, \dd\xi\nonumber,
\end{equation}
where $\widehat{\mu}$ is the Fourier transform of the measure $\mu$:
$$
\widehat{\mu}(\xi)=\int e^{-ix\cdot\xi} \, \dd\mu(x).
$$

We are interested in the decay of $\widehat{\mu}(\xi)$ at infinity for measures $\mu$ with finite energy.
Although  $I_\alpha(\mu)<\infty$ does not imply any pointwise decay of
$|\widehat{\mu}(\xi)|$ as $|\xi| \rightarrow \infty$, in general, the averages of $\widehat{\mu}(\xi)$ decay at infinity.
Let $\Gamma$ be a smooth, compact submanifold of $\R^d$. Let 
$\sigma$ be the surface measure on $\Gamma$. 
The following can be considered as a variant of the Fourier restriction problem:  
 
Fix $\alpha \in (0,d)$. For which $\beta>0$ is
\begin{equation}\label{gammaavg}
\int_\Gamma |\widehat{\mu}(R\xi)|^2  \, d  \sigma (\xi)\leq C_\beta R^{-\beta} I_\alpha(\mu), 
\end{equation}
for all $R>1$?  Let $\beta(\alpha,\Gamma)$ denote the supremum of all $\beta$ so that \eqref{gammaavg} holds for all $\mu\in \sm_d$. 
 
This question was first formulated by Mattila for $\Gamma=S^{d-1}$, \cite{mat}, in his  work on Falconer's distance set problem, \cite{fal}, and intersection theory of general sets. When  $\Gamma= S^{d-1}$, the bound \eqref{gammaavg} with $\beta=d-\alpha$ implies that compact sets in $\R^d$ with Hausdorff dimension greater than  $ \alpha$ have positive measure distance sets, \cite{mat}. There are also more recent, improved, applications  to the distance set problem, see \cite{GIOW}, and  to the pinned distance set problem, see \cite{Liu1,Liu2}. See \cite{HI,IL} for applications of averages on elliptic surfaces to the distance set problem with respect to more general metrics.   There are further applications    to the upper bounds on the dimension of the sets on which convergence to the initial data for the Schr\"odinger equation fails, \cite{bbcr,keith}, and to  dispersive estimates for the linear Schr\"odinger evolution associated to an operator $-\Delta+\mu$ in $\mathbb R^d$, $d\geq 3$,   where the potential $\mu$ is a signed measure with sufficiently large fractal 
dimension, \cite{goldmeasure,egg}. 
Recall that the free Schr\"odinger resolvent $R_0(\lambda^2\pm i0)$, $\lambda>0$,  acts by multiplying Fourier transforms
pointwise by the distribution
\begin{equation*}
\text{p.v.}\frac{1}{|\xi|^2 - \lambda^2} 
\pm i\frac{\pi}{\lambda}\,d\sigma_{\lambda S^{d-1}}.
\end{equation*}
on the Fourier side. Therefore, the spherical averages lead  to a uniform in $\lambda$ estimate  for $\|(I + R_0^+(\lambda^2)\mu)^{-1}\|_{L^2_\mu\to L^2_\mu}$, see \cite{egg}, which is crucial in the study of dispersive decay estimates.

When $\Gamma= S^{1}$, the sharp range of $\beta$  was obtained by Wolff in \cite{wolff1}, also see  \cite{mat,sjo,E}. The best known results in higher dimensions are due to Du and Zhang, \cite{DZ}. For other partial results and counterexamples for $\Gamma=S^{d-1}$ or a codimension 1 manifold with positive principal curvatures, see \cite{mat,sjo,bou1,wolff1,mat2,E,E2,bbcrv,keith,DGOWWZ,DZ,D}.   Also see \cite{edo,ChHaLe2,HL} for results when $\Gamma$ is a curve.

The case when $\Gamma$ is  the truncated light cone,  $\Gamma=\{(x,t)\in\R^d\times \R: |x|=t\in[1,2]\}$, was studied in \cite{wolff3,E,ChHaLe1,Ob, H, TH1}. Optimal results are known in dimensions $d=2,3$, see \cite{E, ChHaLe1}.  Also see, \cite[Eq.~3.27]{TH2} for the best known estimates in dimensions $d\geq 4$.
These estimates imply fractal Strichartz inequalities for the wave equation, see \cite{wolff3} (p.1283-1287) and \cite{E,ChHaLe1,R,H,TH1}.  The conic case is also useful for projection theorems.  For example, using conic averages Oberlin and Oberlin, \cite{ObOb}, studied a version of Marstrand's projection theorem  in $\R^3$ concerning the Hausdorff dimension of  projections onto a restricted family of planes defined  by a curve in the cone. This application was further developed in \cite{TH2}. Another interesting application of the conic case was proposed in \cite{R}; the fractal Strichartz inequalities for measures which are the tensor product of an $\alpha$ dimensional measure in $\R^d$ and the Lebesgue measure in $[0,1]$ also imply lower bounds for the Hausdorff dimension of distance sets. 

 In this paper we study  the case when $\Gamma$ is an hyperboloid. 
 Let $M$ be a diagonal $(d-1) \times (d-1)$ matrix with all nonzero entries equal to $\pm 1$ (with at least two entries having opposite sign) and let $\mathbb{H}$ 
  be the surface $$\mathbb{H} = \left\{ (\xi,  \langle M \xi, \xi \rangle ) : \xi \in B^{d-1}(0,1) \right\}.$$ 
Let $p_M$ denote the number of positive entries in $M$ and $n_M$ the number of negative entries in $M$, and let $$m = \min(p_M, n_M).$$  The ordering of signs is unimportant, so assume without loss of generality that along the diagonal $M$ has $(d-1-m)$ positive signs followed by $m$ negative signs, and write $\mathbb{H}^{d-1}_m = \mathbb{H}$. The quantity $m$ is related to the \textit{signature} $s$ of the matrix by the formula $m = \frac{d-1-s}{2}$. 

Finally let $\sigma$ be the associated surface measure.  
Below we prove the following.
 
 \begin{theorem}\label{thm:knownBounds} Let $d \geq 3$. If $d- m - 1 \leq \alpha < d-m$ then 
\begin{equation} \label{upperbound} \beta(\alpha, \mathbb{H}^{d-1}_m)  \leq \alpha - \frac{\alpha - m}{d-2m}. \end{equation}
On the other hand we have, 
\[ \beta(\alpha, \mathbb{H}^{d-1}_1) \geq \frac{\alpha(d-2)}{d-1}, \quad \alpha \in [0, d-1), \]
and for $m > 1$ and $j \in [1,m-1]$, 
\[ \beta(\alpha, \mathbb{H}^{d-1}_m) \geq	\begin{cases} \frac{\alpha(d-j-1)}{d-j}, &\alpha \in \left[ \frac{(j-1)(d-j)}{m-1}, \frac{j(d-j)}{m} \right) \\
\alpha - \frac{\alpha-j}{d-j-m}, &\alpha \in \left[ \frac{j(d-j)}{m}, \frac{j(d-(j+1))}{m-1} \right).\end{cases} \] 
If $m < \frac{d-1}{2}$ then 
\begin{equation} \label{bilinear} \beta(\alpha, \mathbb{H}^{d-1}_m) \geq \frac{\alpha}{2} + \frac{d}{4} - \frac{1}{2}, \quad \alpha \in \left(\frac{d-1}{2}  ,\frac{d}{2} + 1\right). \end{equation}
Additionally, for any $m$ and $d$, 
	 \begin{equation}\label{eq:elementary}
	\begin{aligned} 
\beta(\alpha, \mathbb{H}^{d-1}_m) &= \alpha,\qquad && \alpha < \frac{d-1}{2} \\
\beta(\alpha, \mathbb{H}^{d-1}_m) &\geq \frac{d-1}{2}, \qquad \frac{d-1}{2} \leq &&\alpha \leq \frac{d+1}{2}  \\
\beta(\alpha, \mathbb{H}^{d-1}_m) &\geq \alpha-1,\qquad \frac{d+1}{2} \leq &&\alpha \leq d-m \\
\beta(\alpha, \mathbb{H}^{d-1}_m) &= \alpha-1,\qquad &&\alpha \geq d-m, \end{aligned}
	\end{equation} (note that we always have $m \leq \frac{d-1}{2},$ and therefore $d-m \geq \frac{d+1}{2}$).\end{theorem}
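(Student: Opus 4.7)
The starting point is the standard identity
\[
\int_{\mathbb{H}}|\widehat{\mu}(R\xi)|^{2}\,d\sigma(\xi) \;=\; \iint \widehat{\sigma}\bigl(R(x-y)\bigr)\,d\mu(x)\,d\mu(y),
\]
together with the stationary-phase bound $|\widehat{\sigma}(z)|\lesssim(1+|z|)^{-(d-1)/2}$, valid because every diagonal entry of $M$ is nonzero and the Gaussian curvature of $\mathbb{H}^{d-1}_m$ therefore never vanishes, and the Frostman-type estimate $(\mu\times\mu)\{|x-y|\leq r\}\lesssim r^{\alpha}I_{\alpha}(\mu)$. Combining these with the interpolation $\min(1,t^{-(d-1)/2})\leq t^{-\tau}$ for any $0\leq\tau\leq(d-1)/2$ immediately yields the first two lines of \eqref{eq:elementary}, namely $\beta\geq\alpha$ for $\alpha<(d-1)/2$ and $\beta\geq(d-1)/2$ for $\alpha\in[(d-1)/2,(d+1)/2]$.

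For the third line $\beta\geq\alpha-1$ I would switch to a thickening argument. After rescaling one has
\[
\int_{\mathbb{H}}|\widehat{\mu}(R\xi)|^{2}\,d\sigma(\xi) \;=\; R^{-(d-1)}\int_{R\mathbb{H}}|\widehat{\mu}|^{2}\,d\sigma_{R},
\]
and since $\supp\mu\subset B(0,1)$ implies $|\widehat{\mu}|^{2}$ varies slowly on balls of radius $1$, the right side is comparable to $R^{-(d-1)}\int_{d(\eta,R\mathbb{H})\lesssim 1}|\widehat{\mu}(\eta)|^{2}\,d\eta$. On the shell $|\eta|\sim R$ the Plancherel identity $\int|\widehat{\mu}|^{2}|\eta|^{\alpha-d}\,d\eta\sim I_{\alpha}(\mu)$ then controls this by $R^{-(d-1)}\cdot R^{d-\alpha}I_{\alpha}(\mu)=R^{1-\alpha}I_{\alpha}(\mu)$. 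The matching upper bound $\beta\leq\alpha-1$ for $\alpha\geq d-m$ in the fourth line will follow from the Knapp construction below applied to a single isotropic line in $\mathbb{H}$.

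For the Knapp upper bound \eqref{upperbound} I would use that $\mathbb{H}^{d-1}_{m}$ contains $m$-dimensional affine isotropic subspaces (maximal null subspaces of the quadratic form $\langle M\cdot,\cdot\rangle$, of dimension exactly $m=\min(p_M,n_M)$). Taking $\mu$ to be the product of a Frostman measure of dimension $(\alpha-m)$ on a transverse fractal with the lift of Lebesgue measure along a short piece of an isotropic $m$-plane, thickened at scale $R^{-1}$ in the normal direction, produces an extremal example: the tangent plate of $\mathbb{H}$ sits inside $\supp\widehat{\mu}$ in the isotropic directions, and a direct computation of $I_{\alpha}(\mu)$ against the surface average, optimized in the plate aspect ratio, gives exactly the rate $R^{-\alpha+(\alpha-m)/(d-2m)}$. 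The restriction $d-m-1\leq\alpha<d-m$ is precisely the interval in which the full $m$-plane, rather than a lower-dimensional isotropic subspace, is the optimal Knapp direction.

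For the remaining positive results I would use bilinear adjoint restriction on hyperbolic surfaces. When $m<(d-1)/2$ Lee's bilinear hyperbolic restriction theorem is available at the Stein--Tomas-type endpoint, and combining it with the Tao--Vargas--Vega broad--narrow reduction and the Du--Zhang frequency-localized averaging argument yields \eqref{bilinear}. The piecewise lower bounds for $m\geq 1$ then follow by iterating this bilinear input with a slicing step: freezing $j$ nonisotropic coordinates reduces $\mathbb{H}^{d-1}_m$ to an elliptic paraboloid of dimension $d-j-1$ on which the sharp elliptic fractal average is already known, and balancing the slicing loss against the bilinear gain produces the two candidate rates and the break points in the stated piecewise formula. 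I expect the main technical obstacle to be the degeneration of the bilinear hyperbolic estimate near null or parallel frequency caps: a Whitney-type decomposition of $\mathbb{H}\times\mathbb{H}$ is needed to isolate genuinely transversal bilinear pairs, and the residual near-null contribution must be absorbed either into the slicing induction or handled directly by the Knapp-dual example used in the upper bound.
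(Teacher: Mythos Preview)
Your treatment of \eqref{eq:elementary} is essentially correct and parallels the paper's application of the Sj\"olin--Erdo\u{g}an averaging lemma. There are, however, two substantive gaps.

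For the upper bound \eqref{upperbound}, tensoring Lebesgue measure on an isotropic $m$-plane with a \emph{generic} $(\alpha-m)$-dimensional Frostman measure in the transverse directions, or optimising over a Knapp plate aspect ratio, does not produce the sharp exponent in the range $d-m-1\le\alpha<d-m$. The paper passes to the dual weighted-restriction formulation $s_d(\alpha)=(\alpha-\beta)/2$ and then tensors an explicit extremiser on the $2m$ isotropic variables with the function and measure from Du's sharp \emph{elliptic} counterexample for the Schr\"odinger operator in $\R^{d-2m}$ (Theorem~1.2(a) of \cite{D}). It is that nontrivial external input, not any direct Knapp computation, which supplies the exact rate $(\alpha-m)/(d-2m)$; a pure Knapp example suffices only when $\alpha\ge d-m$, as the paper itself remarks.

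For the piecewise lower bounds, your proposal to freeze $j$ frequency coordinates and reduce to a sharp elliptic estimate cannot work as written: since $j\le m-1<m$, any such slice of $\mathbb{H}^{d-1}_m$ retains at least one direction of negative principal curvature and is never an elliptic paraboloid; moreover the sharp elliptic fractal bound is itself not known in all dimensions. The paper instead runs the Du--Zhang broad-narrow scheme \emph{directly} on $\mathbb{H}^{d-1}_m$: the $(k+1)$-broad case uses Bennett--Carbery--Tao multilinear restriction, while the $k$-narrow case uses the Bourgain--Demeter narrow decoupling for hyperbolic surfaces, which carries an unavoidable loss $K^{m(1/2-1/p)}$ at the critical exponent $p_{m,k}=\frac{2(k-m+1)}{k-m-1}$. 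The integer $j$ in the statement enters via the substitution $k+1=d-j$ when balancing the broad bound $\alpha/(2(k+1))$ against the narrow bound $(k+1-d+\alpha)/(2(k-m+1))$, not through any coordinate slicing of the surface. Your outline for \eqref{bilinear} is closer in spirit, though the paper avoids Whitney decomposition entirely: a linear-algebra argument shows that among any $m+2$ caps in $(m+2)$-broad position, some pair already satisfies Lee's bilinear separation condition $|\langle M(\xi-\eta),\overline{\xi}-\overline{\eta}\rangle|\gtrsim K^{-O(1)}$, and that pair is fed into a weighted version of Lee's bilinear estimate.
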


\noindent This shows that the case $\alpha \geq d-m$ is completely understood. We also have the following corollary. 
\begin{cor} If $d \geq 3$ is odd and $m = \frac{d-1}{2}$, then
\[ \beta(\alpha, \mathbb{H}^{d-1}_m) =  \begin{cases} \alpha &\text{ if } \alpha \in \left( 0, \frac{d-1}{2} \right) \\
\frac{d-1}{2} &\text{ if } \alpha \in \left[ \frac{d-1}{2}, \frac{d+1}{2} \right] \\
\alpha-1 &\text{ if } \alpha \in \left( \frac{d+1}{2}, d \right]. \end{cases} \]
If $d \geq 4$ is even and $m = \frac{d}{2} - 1$, then 
\[ \beta(\alpha, \mathbb{H}^{d-1}_m) =  \begin{cases} \alpha &\text{ if } \alpha \in \left( 0, \frac{d-1}{2} \right) \\
\frac{d-1}{2} &\text{ if } \alpha \in \left[ \frac{d-1}{2}, \frac{d}{2} \right] \\
\frac{\alpha}{2} + \frac{d}{4} - \frac{1}{2} &\text{ if } \alpha \in \left[ \frac{d}{2}, \frac{d}{2}+1 \right] \\
\alpha-1 &\text{ if } \alpha \in \left( \frac{d}{2}+1, d \right]. \end{cases} \]
\end{cor}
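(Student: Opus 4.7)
The plan is to read off the corollary directly from Theorem~\ref{thm:knownBounds} by substituting the specific values $m = (d-1)/2$ (for odd $d$) and $m = d/2 - 1$ (for even $d$), then checking that the resulting upper and lower bounds coincide on every subinterval of $\alpha \in (0,d]$. The only auxiliary fact beyond Theorem~\ref{thm:knownBounds} itself is that $\alpha \mapsto \beta(\alpha, \mathbb{H}^{d-1}_m)$ is monotone non-decreasing. This is immediate: since every $\mu \in \sm_d$ is supported in $B(0,1)$, one has $|x-y|^{-\alpha'} \leq 2^{\alpha-\alpha'}|x-y|^{-\alpha}$ on the support of $\mu \otimes \mu$, so $I_{\alpha'}(\mu) \lesssim I_\alpha(\mu)$ for $\alpha' \leq \alpha$, and any estimate of the form \eqref{gammaavg} at level $\alpha'$ yields the same estimate at level $\alpha$.

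For odd $d$ with $m = (d-1)/2$, I first observe that $d - 2m = 1$, which collapses the upper bound \eqref{upperbound} to $\beta \leq \alpha - (\alpha - m) = (d-1)/2$ on the interval $[d-m-1, d-m) = [(d-1)/2, (d+1)/2)$. Together with the lower bound $\beta \geq (d-1)/2$ from the second line of \eqref{eq:elementary}, this yields the equality $\beta = (d-1)/2$ throughout $[(d-1)/2, (d+1)/2)$. At the right endpoint $\alpha = (d+1)/2 = d - m$, the fourth line of \eqref{eq:elementary} gives $\beta = \alpha - 1 = (d-1)/2$, matching continuously. The remaining ranges $\alpha < (d-1)/2$ and $\alpha > (d+1)/2$ are already exact equalities in \eqref{eq:elementary}.

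For even $d$ with $m = d/2 - 1$, one has $d - 2m = 2$ and $m < (d-1)/2$, so \eqref{upperbound} reduces to $\beta \leq \alpha/2 + d/4 - 1/2$ on $[d/2, d/2+1)$. This matches the bilinear lower bound \eqref{bilinear} exactly, giving $\beta = \alpha/2 + d/4 - 1/2$ on that interval. The right endpoint $\alpha = d/2 + 1 = d - m$ is handled by the fourth line of \eqref{eq:elementary}, which delivers $\beta = d/2$, consistent with the previous formula. For $\alpha \in [(d-1)/2, d/2]$ I would substitute $\alpha = d/2$ in \eqref{upperbound} to obtain $\beta(d/2, \mathbb{H}^{d-1}_m) \leq (d-1)/2$, and then apply the monotonicity discussed above to conclude $\beta(\alpha, \mathbb{H}^{d-1}_m) \leq (d-1)/2$ for all smaller $\alpha$ in this range; combined with the matching lower bound $\beta \geq (d-1)/2$ from the second line of \eqref{eq:elementary}, this gives $\beta = (d-1)/2$.

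The one place requiring care is this use of monotonicity to cover the window $[(d-1)/2, d/2]$ in the even case, since \eqref{upperbound} as stated does not reach below $\alpha = d/2$. Apart from that, the argument is pure substitution and alignment of intervals; no ingredients beyond Theorem~\ref{thm:knownBounds} are needed, and the piecewise formulas in the corollary emerge directly by collecting the matching bounds.
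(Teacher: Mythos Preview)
Your proposal is correct and follows essentially the same approach as the paper: substitute the specific values of $m$ into Theorem~\ref{thm:knownBounds} and verify that the upper and lower bounds coincide on each subinterval. Your treatment is in fact more careful than the paper's own (very terse) explanation, since you explicitly invoke monotonicity of $\alpha \mapsto \beta(\alpha, \mathbb{H}^{d-1}_m)$ to propagate the upper bound from $\alpha = d/2$ down to $\alpha \in [(d-1)/2, d/2)$ in the even case; the paper states this monotonicity in Section~\ref{preliminaries} but does not spell out its use here.
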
 

\noindent In particular note the decay rate is completely determined in the case $d =3$ and $d = 4$ (since we must have $m = 1$). When $d$ is odd this corollary follows from the fact that $\frac{d-1}{2} = d - m -1$ precisely when $m = \frac{d-1}{2}$. Note also that in this case $d - m = \frac{d+1}{2}$. When $d$ is even the corollary follows from \eqref{upperbound}, \eqref{bilinear} and \eqref{eq:elementary}. The second equality also holds when $d=2$, but this reduces to Wolff's theorem for the parabolic or circular averages. 

 As in the elliptic case there is an equivalent formulation of our main problem in terms of weighted restriction estimates for the extension operator $Ef$ associated to $\mathbb{H}_{m}^{d-1}$. In particular if $\mu_{R}$ is an $\alpha$-dimensional measure supported in $B^{d}(0,R)$ then to prove Theorem \ref{thm:knownBounds} it suffices to study optimal $s(\alpha)$ for which $$\frac{\|Ef\|_{L^{2}(\mu_{R})}}{\|f\|_{L^2}} \lesssim R^{s(\alpha)}, \ \ \ \ \ \text{supp}(f) \subset B^{d-1}(0,2). $$ We discuss how to make this dependence precise below in Section \ref{sec:equiv}. After discretizing the measure we can reduce matters to studying estimates of the form \begin{equation}\label{eq:weighted0} \|Ef\|_{L^{2} (X)} \lesssim R^{s(\alpha)}\|f\|_{L^2} \end{equation} where $X$ is an $\alpha$-dimensional union of lattice unit cubes contained in $B^{d}(0,R)$ (see the beginning of Section \ref{three} for the precise definition). These estimates can be studied using recent techniques established by Du and Zhang in \cite{DZ} to study weighted estimates for paraboloids. However new ideas are needed to prove the full range of bounds in Theorem \ref{thm:knownBounds} since a direct application of the Du-Zhang argument to $\mathbb{H}_{m}^{d-1}$ yields sub-optimal results in a range of cases. We briefly give an example that helps explain why this is the case. 
 
 An argument due to Rogers, Vargas, and Vega \cite{RVV} implies that there is a $(d-1)$-dimensional measure $\nu_R$ and a function $f \in L^2$ supported in the unit ball such that $$ \|Ef\|_{L^2 (\nu_R)} \gtrsim R^{\frac{1}{2}} \|f\|_{L^2}.$$ Since one always has $$\|Ef\|_{L^{2}(\nu_R)} \lesssim R^{\frac{1}{2}} \|f\|_{L^{2}}$$ this immediately implies that the optimal $s(\alpha)$ in \eqref{eq:weighted0} is $s(\alpha) = \frac{1}{2}$ when $\alpha \geq d-1$. We will show below in Section \ref{three} that in fact the optimal value is $s(\alpha) = \frac{1}{2}$ for all $\alpha \geq d-m$. Our example is similar to the example found in \cite{RVV}. This contrasts the case of the paraboloid where the case $\alpha = d-1$ is much more difficult and was only recently understood in the case $d = 3$ by Du, Guth, and Li, \cite{DGL}, and in general dimensions by Du and Zhang, \cite{DZ}. Moreover, if $m \geq 1$ we see that we have a large `trivial' range where $\alpha \in [d-m, d],$ and thus we need to focus on the case of smaller $\alpha$ with $\alpha < d-m$. A direct application of the Du-Zhang method does not give optimal results in this range for any $\alpha$, and so we need to augment their approach with some new ideas adapted to the geometry of $\mathbb{H}_{m}^{d-1}$.
 
 After discussing some counterexamples in Section \ref{three} we explain in Section \ref{four} how to modify the Du-Zhang method to obtain better bounds on $\beta(\alpha, \mathbb{H}_{m}^{d-1})$ when $\alpha < d-m$ and $m \geq 1$. The general scheme of the argument is the same as in \cite{DZ}, although we need to optimize between different levels in the `broad' and `narrow' cases depending on $\alpha$ and $m$. In order to make this argument work we need to use $k$-narrow decoupling estimates for $\mathbb{H}_{m}^{d-1}$ which follow from arguments due to Bourgain and Demeter in \cite{BD2}. These are discussed below in Section \ref{sec:narrow}. 
 
 However, this is not sufficient to prove the lower bounds claimed in \eqref{bilinear} in Theorem \ref{thm:knownBounds} (and in particular not sufficient to obtain optimal bounds in the range $\alpha \in [2,3)$ when $d = 4$). These bounds require a bilinear argument that invokes weighted versions of bilinear estimates for $\mathbb{H}_{m}^{d-1}$ due to Lee, \cite{L}, and Vargas, \cite{V}. The weighted estimates are then used in a broad-narrow argument similar to \cite{DZ}, though to justify their use we need to incorporate some observations from \cite{Ba} about the estimates of Lee and Vargas. We carry out this argument and prove \eqref{bilinear} in Section \ref{five}. We remark that the cases considered in Section \ref{five} do not require any decoupling estimates beyond the trivial $L^{\infty}$ decoupling which is a consequence of Cauchy-Schwarz. 
 
 \subsection*{Acknowledgments} The authors thank Jonathan Hickman and Marina Iliopoulou for helpful discussions, from which Section 3 in particular benefited. The authors also thank the anonymous referee for helpful comments. The second author is partially supported by the Simons collaboration grant, 634269.

\section{Preliminaries}
\label{preliminaries}
We summarize and prove some important results which we will use throughout the rest of the paper. First note that $\beta(\alpha,\Gamma)$ is nondecreasing and continuous (see, e.g., Lemma 3.1 in \cite{wolff1}) in $\alpha$. In addition 
 $\beta(\alpha,\Gamma)\leq \alpha$ since, using the invariance of $\beta(\alpha,\Gamma) $ under dilations and rotations,  one can bound the energy integral. 
 
In addition it will be more convenient for us to work with the following equivalent formulation of \eqref{gammaavg}. We let $\widetilde{\beta}(\alpha,\Gamma)$ be the supremum over all $\beta$ such that $$\int_{ \Gamma }|\widehat{\mu}(R\xi)|^2 \, d\sigma(\xi) \lesssim_{\beta} c_{\alpha}(\mu) R^{-\beta}$$ for all Borel probability measures $\mu$ supported in $B^{d}(0,1)$, where  $$c_{\alpha}(\mu) := \sup_{\substack{ B(x, r) \\ r> 0, x\in \R^d } } \frac{\mu(B(x,r))}{r^{\alpha}}.$$
We note that for all $\alpha\in (0,d)$, $\widetilde{\beta}(\alpha,\Gamma) =\beta(\alpha,\Gamma)$. One direction follows from the inequality $I_{\alpha_1}(\mu)\lesssim c_{\alpha_2}(\mu) $ for $\alpha_2>\alpha_1$ and the continuity of $\beta(\alpha, \Gamma)$ in $\alpha$. For the other direction, one can use Lemma 1.5 in \cite{wolff1}.

\subsection{Elementary Positive Results} We recall the following theorem. 

\begin{theorem}[\cite{S},\cite{E}]  Let $\nu$ be a probability measure on $\R^d$ with compact support such that $$|\widehat{\nu}(\xi)| \lesssim |\xi|^{-a} \ \ \text{ and } \ \ \nu(B(x,r)) \lesssim r^{b}, \ \ x \in \R^d$$ for $a, b \in (0,d)$. Then for any $\mu\in \sm$, $$\int |\widehat{\mu}(R\xi)|^2 d\nu(\xi) \lesssim I_{\alpha}(\mu) R^{-\max( \min(a, \alpha), \alpha - d + b  ) }.$$  \end{theorem}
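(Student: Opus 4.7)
The plan is to prove two separate upper bounds and take the better of the two: a bound of the form $R^{-\min(a,\alpha)}I_\alpha(\mu)$ exploiting the pointwise Fourier decay hypothesis on $\nu$, and a bound of the form $R^{-(\alpha-d+b)}I_\alpha(\mu)$ exploiting the ball growth condition. Both start from the Plancherel-type identity
\[
\int |\widehat{\mu}(R\xi)|^2\,d\nu(\xi) \;=\; \iint \widehat{\nu}\!\bigl(R(x-y)\bigr)\,d\mu(x)\,d\mu(y),
\]
together with the elementary mass bound $(\mu\otimes\mu)\{|x-y|\le r\}\le r^{\alpha}I_\alpha(\mu)$, obtained by integrating $|x-y|^{-\alpha}\ge r^{-\alpha}$ over the set in question.

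For the first bound I would combine the hypothesis with the trivial $|\widehat\nu|\le 1$ to get $|\widehat{\nu}(\eta)|\lesssim(1+|\eta|)^{-a}$, and split the $\mu\otimes\mu$ integral at the threshold $|x-y|\sim 1/R$. The near-diagonal piece contributes $R^{-\alpha}I_\alpha(\mu)$ by the mass bound. For the far piece, a dyadic decomposition in $|x-y|$ together with the same mass bound on each shell yields $R^{-a}I_\alpha(\mu)\sum_{k}2^{k(\alpha-a)}$ summed over dyadic scales $2^k$ between $1/R$ and $2$. This sum is $O(1)$ when $\alpha>a$ and of order $R^{a-\alpha}$ when $\alpha<a$, so the far part is bounded by $R^{-\min(a,\alpha)}I_\alpha(\mu)$ in either case.

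For the second bound I would work on the frequency side. Since $\mu$ is supported in $B(0,1)$, one may choose a Schwartz bump $\psi$ identically $1$ on $B(0,1)$ and write $\widehat{\mu}=\widehat{\psi}*\widehat{\mu}$; Cauchy-Schwarz then yields the local-constancy inequality $|\widehat{\mu}(\eta)|^2\lesssim(|\widehat{\psi}|*|\widehat{\mu}|^2)(\eta)$. After rescaling by $R$, integrating against $d\nu$ in $\xi$, and using Fubini, the problem reduces to bounding $\int|\widehat{\mu}(R\xi)|^2(\Phi_R*\nu)(\xi)\,d\xi$, where $\Phi_R(\cdot)=R^d|\widehat{\psi}(R\cdot)|$ is rapidly decaying at scale $1/R$. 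The ball condition $\nu(B(x,r))\lesssim r^b$ together with a dyadic decomposition of the kernel $\Phi_R$ yields the uniform bound $(\Phi_R*\nu)(\xi)\lesssim R^{d-b}$ on a neighborhood of $\supp\nu$, with rapid decay outside. Changing variables $\eta=R\xi$, applying $|\eta|^{d-\alpha}\le(2R)^{d-\alpha}$, and invoking the energy identity $I_\alpha(\mu)=c_{\alpha,d}\int|\widehat{\mu}|^2|\eta|^{\alpha-d}\,d\eta$ then delivers $R^{-(\alpha-d+b)}I_\alpha(\mu)$.

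The main technical subtlety is the mollification step in the second bound: one must verify that choosing $N>b$ in the Schwartz tail bound $|\widehat{\psi}(\eta)|\lesssim(1+|\eta|)^{-N}$ causes the geometric sum from the dyadic decomposition of $\Phi_R*\nu$ to converge uniformly to the rate $R^{d-b}$, with negligible contribution from the tail $|\xi|\gtrsim 1$. Taking the maximum of the two bounds produces the claimed exponent $\max(\min(a,\alpha),\alpha-d+b)$.
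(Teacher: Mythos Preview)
The paper does not give a proof of this theorem; it is quoted from Sj\"olin \cite{S} and Erdo\u{g}an \cite{E} and then immediately applied with $\nu$ the surface measure on $\mathbb{H}_m^{d-1}$. Your argument is correct and is exactly the standard route taken in those references: the bound $R^{-\min(a,\alpha)}$ comes from the Parseval identity $\int|\widehat{\mu}(R\xi)|^2\,d\nu(\xi)=\iint\widehat{\nu}(R(x-y))\,d\mu(x)\,d\mu(y)$ together with a dyadic decomposition in $|x-y|$ and the mass bound $(\mu\otimes\mu)\{|x-y|\le r\}\le r^{\alpha}I_\alpha(\mu)$, while the bound $R^{-(\alpha-d+b)}$ comes from mollifying $\widehat{\mu}$ at unit scale, using the ball condition to get $\Phi_R*\nu\lesssim R^{d-b}$, and inserting the weight $|\eta|^{\alpha-d}$ to recover $I_\alpha(\mu)$. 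The only cosmetic point is that at the borderline $\alpha=a$ your dyadic sum produces a harmless $\log R$ factor, which does not affect the supremum $\beta(\alpha,\Gamma)$ in the paper's application.
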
 

\noindent In our setting we may take $\nu$ to be the surface measure of $\mathbb{H}_{m}^{d-1}$, so that $a = \frac{d-1}{2}$ and $b = d-1$ above.  This implies the lower bounds in \eqref{eq:elementary}.

\subsection{Equivalence Between Decay and Localized Restriction Estimates}\label{sec:equiv} Let $Ef$ denote the Fourier extension operator associated to $\mathbb{H}_{m}^{d-1}$. As in \cite{E2}, \cite{DGOWWZ}, \cite{DZ} we will see that it will suffice to consider certain weighted $L^{2}$ estimates for $Ef$.

Fix $\alpha \in (0, d]$ and $R > 1$. Let $\mu$ be an $\alpha$-dimensional measure supported in the unit ball as above, and let $\mu_R$ be the measure on $B^{d}(0,R)$ defined by $\mu_R (A) = R^{\alpha}\mu(R^{-1}A).$ Note that $$c_{\alpha}(\mu_R) \lesssim c_{\alpha}(\mu) $$ with the implicit constant independent of $R$. We let $s_{d}(\alpha)$ be the infinum over all $s$ such that $$\|Ef\|_{L^2 (d\mu_R; B_{R}(0,R))} \lesssim c_{\alpha}(\mu_R)^{\frac{1}{2}} R^{s} \|f\|_{L^2}, \ \ \ \ \ \ \text{supp}(f) \subset B^{d-1}(0,2)$$ for all $\alpha$-dimensional probability measures $\mu$ supported in $B^d(0,1)$. We will repeatedly use the following relationship between $\beta(\alpha, \mathbb{H}^{d-1}_m)$ and $s_{d}(\alpha)$. 

\begin{prop}\label{prop:equiv} One has $$s_{d}(\alpha) = \frac{\alpha - \beta(\alpha, \mathbb{H}^{d-1}_m) }{2}.$$ 
\end{prop}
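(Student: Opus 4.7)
The plan is to prove the equality by establishing both inequalities via $L^2$ duality. Parametrize $\mathbb{H}^{d-1}_m$ by $\Phi(\xi) = (\xi, \langle M\xi, \xi\rangle)$; then $Ef(x) = \int f(\xi)\, e^{ix\cdot \Phi(\xi)}\,d\xi$ has $L^2$-adjoint $E^*g(\xi) = \mathbf{1}_{B^{d-1}(0,2)}(\xi)\,\widehat{g\,d\mu_R}(\Phi(\xi))$. For the easy inequality $s_d(\alpha) \geq (\alpha - \beta)/2$, I would test the dual bound on $g \equiv 1$. Using $\widehat{d\mu_R}(\eta) = R^\alpha \widehat{\mu}(R\eta)$, $\|1\|^2_{L^2(d\mu_R)} = \mu_R(B(0,R)) \asymp R^\alpha$, and the uniform boundedness of the Jacobian of $\Phi$ on $B^{d-1}(0,2)$, one derives
\[
\int_{\mathbb{H}^{d-1}_m} |\widehat{\mu}(R\omega)|^2\, d\sigma(\omega) \lesssim c_\alpha(\mu)\,R^{2s-\alpha}
\]
for every $s > s_d(\alpha)$, whence $\beta(\alpha, \mathbb{H}^{d-1}_m) \geq \alpha - 2s_d(\alpha)$.

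For the reverse inequality $s_d(\alpha) \leq (\alpha-\beta)/2$, duality reduces it to showing
\[
\int_{\mathbb{H}^{d-1}_m} |\widehat{g\,d\mu_R}(\omega)|^2\, d\sigma(\omega) \lesssim c_\alpha(\mu)\,R^{\alpha-\beta}\,\|g\|_{L^2(d\mu_R)}^2
\]
for $g \in L^2(d\mu_R)$, allowing an $R^{o(1)}$ slack (absorbed into the supremum defining $\beta$). Rescaling by $y = x/R$ pushes the signed measure $g\,d\mu_R$ on $B(0,R)$ forward to a signed measure $\nu_g$ on $B(0,1)$ with $\widehat{\nu_g}(R\omega) = \widehat{g\,d\mu_R}(\omega)$, and a direct change of variables gives
\[
I_\alpha(|\nu_g|) = R^\alpha \iint |x-x'|^{-\alpha}\,|g(x)|\,|g(x')|\,d\mu_R(x)\,d\mu_R(x').
\]
A Schur-test estimate for the Riesz-potential operator $|x|^{-\alpha}*$ acting on $L^2(d\mu_R)$, combined with the Frostman bound $\mu_R(B(\cdot,r)) \lesssim c_\alpha(\mu) r^\alpha$, controls the right-hand side by $\lesssim c_\alpha(\mu)\,R^\alpha(\log R)\,\|g\|_{L^2(d\mu_R)}^2$; this is valid either after unit-scale discretization of $\mu_R$ as in Section~\ref{three}, or via a soft continuity-in-$\alpha$ argument exploiting the continuity of $\beta(\alpha,\Gamma)$. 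Finally, invoking the $I_\alpha$-formulation of the Fourier-decay hypothesis (equivalent to the $c_\alpha$-formulation by Wolff's Lemma~1.5 as noted at the beginning of Section~\ref{preliminaries}) and extending it to signed/complex $\nu_g$ via a standard four-part decomposition into positive real and imaginary components yields the claimed bound, with the logarithmic factor absorbed.

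The main obstacle is the reverse direction: a naive duality argument would try to apply the Fourier-decay hypothesis directly to $\nu_g$, but $\nu_g$ need not be $\alpha$-dimensional in the $c_\alpha$ sense for general $g \in L^2(d\mu_R)$ — for instance, if $g$ concentrates on a small subset of $\operatorname{supp}(\mu_R)$, then $c_\alpha(|\nu_g|)$ can blow up. The resolution is to pass to the equivalent $I_\alpha$ formulation, where the required bound on $I_\alpha(|\nu_g|)$ follows from Schur's test exploiting the $\alpha$-Frostman regularity of $\mu_R$ itself, rather than of $\nu_g$.
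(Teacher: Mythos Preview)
Your argument is correct, but the paper takes a genuinely different route for the hard direction $s_d(\alpha)\le(\alpha-\beta)/2$. You go by straight $L^2$ duality: reduce to bounding $\int_{\mathbb{H}}|\widehat{g\,d\mu_R}|^2\,d\sigma$, then switch from the $c_\alpha$-formulation to the $I_\alpha$-formulation so that a Schur test against the Frostman bound for $\mu_R$ controls $I_{\alpha'}(|\nu_g|)$ for $\alpha'<\alpha$, and finally close by continuity of $\beta$ in $\alpha$. The paper instead stays entirely inside the $c_\alpha$-framework and runs a level-set/weak-type argument in the style of \cite{bbcr,R,H}: it applies the $c_\alpha$-decay hypothesis not to $g\,d\mu_R$ but to the \emph{normalized restriction} of $\mu_R$ to the super-level set $\{|Ef|>\lambda\}$, observes that this restricted probability measure has $c_\alpha$ comparable to $c_\alpha(\mu_R)/\mu_R(\{|Ef|>\lambda\})$, and deduces a weak-type bound $\mu_R(\{|Ef|>\lambda\})\lesssim \lambda^{-2}R^{\alpha-\beta}c_\alpha(\mu_R)$ which is then upgraded to the $L^2$ estimate via the layer-cake formula.

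What each buys: the paper's level-set trick avoids ever leaving the $c_\alpha$-setting and in particular sidesteps the divergence of the Schur integral at small scales (so no discretization or passage to $\alpha'<\alpha$ is needed); your duality approach is conceptually more direct and makes the $L^2$ structure transparent, at the cost of invoking the equivalence $\widetilde\beta=\beta$ and the continuity-in-$\alpha$ machinery already recorded in Section~\ref{preliminaries}. Both the four-part decomposition of the complex measure and the easy direction (testing the dual inequality on $g\equiv 1$) are handled the same way in both arguments.
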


\noindent A proof of this proposition is in the Appendix below.

\subsection{Wave Packet Decomposition}

 Fix a scale $R  > 1$ and suppose $\theta$ is a ball of radius $R^{-\frac{1}{2}}$ in frequency space. We let $G(\theta)$ denote the unit normal to $\mathbb{H}$ above the center of $\theta$. In particular, if $\xi \in \R^{d-1}$ is the center of $\theta$ then we have $$G(\theta) = \frac{1}{|(2\xi, -1)|}(2\xi_1, \dotsc, 2\xi_{d-m-1}, -2\xi_{d-m},\dotsc,-2\xi_{d-1} , - 1).$$ Note that the angle between $G(\theta_1)$ and $G(\theta_2)$ is proportional to the distance between the centers of $\theta_1$ and $\theta_2$. Below we will also let $G_{0}$ be the function $$G_0 (\xi) = (2\xi_1, \dotsc, 2\xi_{d-m-1}, -2\xi_{d-m},\dotsc,-2\xi_{d-1} , - 1). $$

 We recall the scale $R^{\frac{1}{2}}$ wave packet decomposition for $Ef$. We let $\{\theta \}$ be a collection of finitely-overlapping balls of radius $R^{-\frac{1}{2}}$ covering the support of $f$, and let $\{\nu \}$ be a collection of finitely-overlapping balls of radius $R^{\frac{1}{2}}$ covering $B_{x}^{d-1}(0,R).$ Then using a partition of unity we decompose $$f = \sum_{\theta, \nu} f_{\theta, \nu}$$ where $f_{\theta, \nu}$ is supported in a small neighborhood of the $R^{-\frac{1}{2}}$-ball $\theta$, and where $\widehat{f_{\theta, \nu}}$ rapidly decays outside the $R^{\frac{1}{2}}$-ball $\nu$. Then $$Ef = \sum_{\theta, \nu} Ef_{\theta, \nu}$$ and each wave packet $Ef_{\theta, \nu}$ is essentially supported in a $R^{\frac{1}{2} + \delta} \times \dotsm \times R^{\frac{1}{2} + \delta} \times R^{1 + \delta}$ tube $T_{\theta, \nu}$ in $\R^d$ passing through $\nu$ with long direction $G(\theta)$. Here $\delta > 0$ is a small parameter which will be harmless to our estimates, and hence we suppress its role below (we can for example take $\delta = \epsilon^{100}$, where $\epsilon$ is fixed below). For more on this wave packet decomposition see for example \cite{L}, \cite{G}.

\section{Narrow decoupling}\label{sec:narrow}  Fix a scale $K \gg 1$ and decompose the support of the input function $f$ as a union of finitely-overlapping caps $\tau$ of radius $K^{-1}$. Then use a partition of unity to decompose $f = \sum_{\tau}f_{\tau}$ and $Ef = \sum_{\tau} Ef_{\tau}$.
 
We recall the following decoupling result for surfaces with non-zero Gaussian curvature proved by Bourgain and Demeter.

\begin{prop}[\cite{BD2}] \label{prop:narrowDec0} Let $\mathcal{M}$ be a smooth, compact manifold with Gaussian curvature bounded away from 0 and let $m$ denote the minimum between the number of positive and negative principal curvatures of $\mathcal{M}$. Let $E_{\mathcal{M}}f$ denote the Fourier extension operator associated to $\mathcal{M}$. Let $Q$ be a $K^2$-cube and suppose $E_{\mathcal{M}}f = \sum_{\tau} E_{\mathcal{M}}f_{\tau}$, where the $\tau$ are $K^{-1}$-caps. Also suppose $2 \leq p \leq \frac{2(d-m + 1)}{d -m - 1}$. Then for any $\eta > 0$ one has $$\|E_{\mathcal{M}}f\|_{L^p (Q)} \lesssim_{\eta} K^{m( \frac{1}{2} - \frac{1}{p} )  + \eta} \bigg( \sum_{\tau} \|E_{\mathcal{M} }f_{\tau}\|_{L^p (w_Q) }^{2}\bigg)^{\frac{1}{2}}.$$ If $p >  \frac{2(d-m + 1)}{d -m - 1}$ then the above estimate holds with the loss $K^{m( \frac{1}{2} - \frac{1}{p} )}$ replaced by $K^{\frac{d-1}{2} - \frac{d+1}{p}}.$
\end{prop}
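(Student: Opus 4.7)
The plan is to combine Bourgain--Demeter $\ell^2$-decoupling for a lower-dimensional elliptic paraboloid with a local $L^2$-orthogonality step that absorbs the $m$ negatively-curved directions at the cost of $K^{m(1/2-1/p)}$. First I reduce to a model case: using a smooth partition of unity, a local diagonalization of the Hessian, and the $C^2$-stability of $\ell^2$-decoupling constants, I may assume $\mathcal{M}$ is the graph of $h(\xi) = |\xi'|^2 - |\xi''|^2$ with $\xi = (\xi', \xi'') \in \R^{d-m-1} \times \R^m$. Each $K^{-1}$-cap then factors as $\tau = \tau' \times \tau''$, and I set $f_{\tau''} := \sum_{\tau'} f_{\tau' \times \tau''}$.

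For fixed $\tau''$, $E_{\mathcal{M}} f_{\tau''}$ is a fibered integral over $\xi'' \in \tau''$ of extensions for translates of a single $(d-m-1)$-dimensional elliptic paraboloid in $\R^{d-m}$, the translates differing only by affine modulations in $(x'', x_d)$. Applying the classical Bourgain--Demeter $\ell^2$-decoupling for elliptic paraboloids (which is valid with loss $K^\eta$ for $2 \leq p \leq p_0 := \frac{2(d-m+1)}{d-m-1}$) fiber-by-fiber and then integrating in $\xi''$ gives
\begin{equation*}
\|E_{\mathcal{M}} f_{\tau''}\|_{L^p(Q)} \lesssim_\eta K^\eta \bigg(\sum_{\tau'} \|E_{\mathcal{M}} f_{\tau' \times \tau''}\|_{L^p(w_Q)}^2\bigg)^{\!1/2}, \qquad 2 \leq p \leq p_0.
\end{equation*}
Next I sum over $\tau''$: for each fixed $(x', x_d)$ the function $x'' \mapsto E_{\mathcal{M}} f_{\tau''}(x)$ is frequency-supported in the $K^{-1}$-separated ball $\tau''$, so local $L^2$-orthogonality on $Q$ gives the bound $\ell^2_{\tau''}(L^2(Q)) \to L^2(Q)$ with constant $O(1)$, while the triangle inequality followed by Cauchy--Schwarz over the $\sim K^m$ caps gives $\ell^2_{\tau''}(L^\infty(Q)) \to L^\infty(Q)$ with constant $K^{m/2}$. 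Complex interpolation between these endpoint bounds yields the factor $K^{m(1/2-1/p)}$, and chaining this with the slice-decoupling above produces the target inequality for $p \in [2, p_0]$.

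For $p > p_0$, I interpolate the critical-exponent bound against the trivial $L^\infty$ estimate $\|E_{\mathcal{M}} f\|_{L^\infty(Q)} \leq K^{(d-1)/2} \bigl(\sum_\tau \|E_{\mathcal{M}} f_\tau\|_{L^\infty(Q)}^2\bigr)^{1/2}$, and a short arithmetic check (with interpolation parameter $\theta = 1 - p_0/p$) shows that the interpolated loss at $p$ equals exactly $K^{(d-1)/2 - (d+1)/p + \eta}$, matching the claim. The main obstacle I expect is the fiber-decoupling step in the second paragraph: one must justify that applying elliptic $\ell^2$-decoupling uniformly across a continuous family of parallel slices costs only $K^\eta$ rather than picking up a spurious factor from the $\xi''$-integration. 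The cure is to postpone the $\xi''$-integration to the very end, exploiting the affine invariance of the elliptic decoupling constant so that it can be pulled out uniformly, together with careful bookkeeping of the weights $w_Q$ (which differ only by rapidly decaying tails across slices).
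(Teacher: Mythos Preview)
The paper does not supply a proof of this proposition; it is quoted from Bourgain--Demeter \cite{BD2} and then used as a black box in the proof of the narrow variant, Proposition~\ref{prop:narrowDec}. So there is no in-paper proof to compare against. Your overall strategy---flat/interpolated decoupling over the $m$ negatively-curved directions $\xi''$ at cost $K^{m(1/2-1/p)}$, elliptic $\ell^2$-decoupling in the $\xi'$-directions, and interpolation against the trivial $L^\infty$ bound for $p>p_0$---is the correct one and matches \cite{BD2}.

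There is, however, a genuine gap in the fiber-decoupling step (the one you flag yourself), and the cure you propose does not repair it. For fixed $x''$, the Fourier transform of $Ef_{\tau''}(\cdot,x'',\cdot)$ in $(x',x_d)$ is supported where $\xi_d=|\xi'|^2-|\xi''|^2$ for \emph{some} $\xi''\in\tau''$; since $|\xi''|^2$ varies by $O(K^{-1})$ across a $K^{-1}$-cap, this is only an $O(K^{-1})$-neighbourhood of an elliptic paraboloid, so direct elliptic decoupling there produces $K^{-1/2}$-caps, not $K^{-1}$-caps. Your alternative---pulling the $\xi''$-integral outside by Minkowski and decoupling each fiber---does not reconstruct the target norm $\|Ef_{\tau'\times\tau''}\|_{L^p(w_Q)}$ on the right-hand side: you are left with $\int_{\tau''}\|(\text{elliptic extension of }f_{\tau'}(\cdot,\xi''))\|_{L^p}\,d\xi''$, and there is no inequality relating this back to $\|Ef_{\tau'\times\tau''}\|_{L^p}$ in the required direction.

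The standard fix is a Galilean affine change rather than ``affine invariance of the decoupling constant''. If $\xi_0''$ is the centre of $\tau''$, the shear $\xi_d\mapsto\xi_d+2\xi_0''\cdot\xi''-|\xi_0''|^2$ (an $L^p$-isometric change of variables in physical space) replaces the height by $|\xi'|^2-|\xi''-\xi_0''|^2=|\xi'|^2+O(K^{-2})$. After this, the Fourier support of $Ef_{\tau''}$ lies in an $O(K^{-2})$-neighbourhood of the parabolic \emph{cylinder} $\{(\xi',\xi'',|\xi'|^2):\xi''\in\tau''\}$, and now freezing $x''$, applying elliptic decoupling on each $(x',x_d)$-slice, and combining via Fubini and Minkowski (using $L^p_{x''}\ell^2_{\tau'}\hookrightarrow\ell^2_{\tau'}L^p_{x''}$ for $p\ge 2$) gives the desired bound with only a $K^\eta$ loss. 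With this correction your outline is complete.
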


\noindent If $f$ is supported near a lower-dimensional space we can take advantage of the following `narrow decoupling' result for $\mathbb{H}_{m}^{d-1}$, which will be useful for proving lower bounds for $\beta(\alpha, \mathbb{H}^{d-1}_m)$. Given a $K^{-1}$-cap $\tau$, let $\omega_{\tau}$ denote the center of $\tau$. Given a subspace $V$ of $\R^{d}$ we will write $\tau \in V$ to signify that $$\text{Angle}( G(\omega_{\tau}), V) \leq K^{-1}.$$  We say that $Ef$ is concentrated along a $K^{-1}$ neighborhood of $V$ if $$\sum_{\substack{\tau \notin V } }Ef_{\tau}  \ = \ \text{RapDec}(R)\|f\|_{L^2},$$  where RapDec$(R)$ is a term such that for any $N > 1$ $$|\text{RapDec}(R)| \leq C_{N} R^{-N}.$$  

\begin{prop}[\cite{BD2}] \label{prop:narrowDec} Let $Q$ be a $K^2$-cube and suppose $Ef = \sum_{\tau} Ef_{\tau}$ is concentrated along an $O(K^{-1})$ neighborhood of a $k$-dimensional vector space $V$ in $\R^d$ with $k \geq m+1$. Also suppose $2 \leq p \leq \frac{2(k-m + 1)}{k -m - 1}$. Then for any $\eta > 0$ one has
 \begin{equation}\label{eq:narrowDecEq}
\|Ef\|_{L^p (Q)} \lesssim_{\eta} K^{m( \frac{1}{2} - \frac{1}{p} )  + \eta} \bigg( \sum_{\tau} \|Ef_{\tau}\|_{L^p (w_Q) }^{2}\bigg)^{\frac{1}{2}}
\end{equation}
\end{prop}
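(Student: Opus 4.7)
I would derive Proposition \ref{prop:narrowDec} from the full decoupling estimate (Proposition \ref{prop:narrowDec0}) by restricting $\mathbb{H}^{d-1}_m$ to the preimage of $V$ under the Gauss map and observing that this restriction is itself a hyperbolic paraboloid of dimension $k-1$ whose signature-minimum is at most $m$.

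For the geometric reduction, note that since $G_0(\xi) = (2M\xi,-1)$, the set $S_V := \{\xi \in \R^{d-1} : G_0(\xi) \in V\}$ is the preimage of $V$ under an affine injection, hence an affine $(k-1)$-plane (using that the non-vacuity of the concentration hypothesis forces $V \not\subset e_d^\perp$). Parameterize $S_V$ by $\xi = \xi_0 + Lu$ with $L : \R^{k-1} \to \R^{d-1}$ injective linear. Restricted to $S_V$, the phase of the extension operator simplifies as
\begin{equation*}
x \cdot \Phi(\xi_0 + Lu) \ = \ \text{(affine in } x\text{)} \ + \ \tilde y \cdot u \ + \ \tilde t\,\langle L^{*}MLu, u\rangle,
\end{equation*}
where $(\tilde y, \tilde t) := (L^{*}x' + 2 x_d L^{*}M\xi_0,\, x_d)$ is a surjective linear map $\R^d \to \R^k$. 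Consequently $Ef(x) = e^{i\phi(x)}\,\tilde{E}\tilde{f}(\tilde y, \tilde t)$, where $\tilde{E}$ is the Fourier extension operator for the sub-paraboloid $u \mapsto \langle L^{*}MLu, u\rangle$ in $\R^k$. Since $\langle M\cdot,\cdot\rangle$ has only $m$ negative eigenvalues, its restriction to the subspace $L(\R^{k-1})$ has at most $m$ negative eigenvalues; in particular the sub-paraboloid's signature-minimum satisfies $m' \leq m$.

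Next, transfer the estimate. The $(d-k)$-dimensional kernel of $x \mapsto (\tilde y,\tilde t)$ is integrated out trivially by Fubini, giving $\|Ef\|_{L^p(Q)}^p \sim K^{2(d-k)}\,\|\tilde{E}\tilde{f}\|_{L^p(\tilde Q)}^p$ for a comparable $K^2$-cube $\tilde Q \subset \R^k$, and analogously for the weighted norm $L^p(w_Q)$. The $K^{-1}$-caps $\tau$ with centers in the $K^{-1}$-neighborhood of $S_V$ correspond under $u = L^{-1}(\xi - \xi_0)$ to $K^{-1}$-caps $\tilde\tau$ on the sub-paraboloid (with $O(1)$ distortion), and the concentration hypothesis guarantees that all other caps contribute only a RapDec$(R)$ error. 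Applying Proposition \ref{prop:narrowDec0} to the sub-paraboloid gives, for $2 \leq p \leq \frac{2(k - m' + 1)}{k - m' - 1}$,
\begin{equation*}
\|\tilde{E}\tilde{f}\|_{L^p(\tilde Q)} \ \lesssim_\eta \ K^{m'(\frac{1}{2} - \frac{1}{p}) + \eta}\left(\sum_{\tilde\tau}\|\tilde{E}\tilde{f}_{\tilde\tau}\|_{L^p(w_{\tilde Q})}^2\right)^{1/2}.
\end{equation*}
Since $m' \leq m$ and the hypothesized range $p \leq \frac{2(k-m+1)}{k-m-1}$ is contained in $[2,\frac{2(k-m'+1)}{k-m'-1}]$, transferring back to $\R^d$ yields \eqref{eq:narrowDecEq}.

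The principal technical subtlety is the possible degeneracy of $L^{*}ML$, which occurs precisely when $L(\R^{k-1})$ contains a null direction of $\langle M\cdot,\cdot\rangle$. In that case the sub-paraboloid is a lower-dimensional non-degenerate paraboloid extruded along flat directions, and direct application of Proposition \ref{prop:narrowDec0} to the lower-dimensional base may shrink the effective $p$-range. I would handle this by perturbing $V$ by an amount $\ll K^{-1}$ to restore non-degeneracy, absorbing the perturbation into the $K^\eta$ loss, or equivalently by quotienting along the null directions and verifying that the effective exponents continue to dominate those claimed in \eqref{eq:narrowDecEq}.
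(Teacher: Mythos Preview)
Your overall strategy---restrict the hyperboloid to the Gauss preimage of $V$ and apply lower-dimensional decoupling---is the same as the paper's. However, there is a genuine error in your range claim, and your treatment of the degenerate case does not work.

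First, the inclusion of $p$-ranges is backwards. If $m' \leq m$ then $k-m' \geq k-m$, and since $p_{m,k} = 2 + \frac{4}{k-m-1}$ is \emph{increasing} in $m$, one has $p_{m',k} \leq p_{m,k}$. Concretely, with $k=4$, $m=2$, $m'=1$ one gets $p_{m',k}=4 < 6 = p_{m,k}$. So Proposition~\ref{prop:narrowDec0} applied to the sub-paraboloid gives the bound only for $p \leq p_{m',k}$, which does \emph{not} cover the full range $p \leq p_{m,k}$ you need. The paper repairs this by interpolating the decoupling at $p_{m',k}$ (which has the better loss $K^{m'(\frac{1}{2}-\frac{1}{p})}$) against the trivial Cauchy--Schwarz estimate $\|Ef\|_{L^\infty(Q)} \lesssim K^{(k-1)/2}(\sum_\tau \|Ef_\tau\|_{L^\infty(w_Q)}^2)^{1/2}$; a short calculation shows the interpolated exponent at $p_{m,k}$ is exactly $m(\frac{1}{2}-\frac{1}{p})$.

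Second, the perturbation argument for the degenerate (or nearly degenerate) case does not work. Perturbing $V$ by $\ll K^{-1}$ moves the eigenvalues of $L^*ML$ by at most $O(K^{-1})$, so a null (or nearly null) eigenvalue becomes at best $O(K^{-1})$, not $\sim 1$. The constants in Proposition~\ref{prop:narrowDec0} depend on a lower bound for the Gaussian curvature, so this costs an uncontrolled power of $K$, not merely $K^\eta$. Your alternative of ``quotienting along the null directions'' is the right idea, but it must be made quantitative. The paper does this by splitting the eigenvalues of $L^*ML$ into those with $|\lambda_j| \geq 1 - K^{-2}$ and those with $|\lambda_j| < 1 - K^{-2}$; it flat-decouples (Proposition~\ref{prop:flatDecoupling}) in the $r$ near-null directions at cost $K^{r(\frac{1}{2}-\frac{1}{p})}$, and then applies Proposition~\ref{prop:narrowDec0} on the remaining $(k-r-1)$-dimensional slice where curvature is uniformly bounded below. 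The crucial combinatorial input is Lemma~\ref{lem:eigenvalues}, which guarantees $m(H_W) + r(H_W) \leq m$, so the combined loss is at most $K^{m(\frac{1}{2}-\frac{1}{p})}$; when the inequality is strict one again interpolates with Cauchy--Schwarz to recover the full $p$-range as above.
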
 

\noindent Notice that $p_{m,k}:= \frac{2(k-m + 1)}{k -m - 1}$ is decreasing in $k$, hence the range of $L^{p}$ exponents where the loss scales as $K^{m(\frac{1}{2} - \frac{1}{p})}$ increases when $k$ is smaller. We will see below that this leads to improved lower bounds for $\beta(\alpha, \mathbb{H}^{d-1}_m)$ in the interesting range $\alpha < d - m$. We also remark that Proposition \ref{prop:narrowDec} is indeed an improvement over Proposition \ref{prop:narrowDec0} when applicable. For example, in the case $d = 4, m = 1, k = 3$ one can use Proposition \ref{prop:narrowDec} to decouple at $p = 6$ with a loss of $K^{\frac{1}{3}}$. However if one instead uses Proposition \ref{prop:narrowDec0} in this case with $p = 6$ the loss is $K^{\frac{2}{3}}$. 

The proof of Proposition \ref{prop:narrowDec} is implicit in the argument given in Section 3 of \cite{BD2} (in particular in Proposition 3.2 of \cite{BD2}). It is also similar to other `narrow decoupling' arguments (see for example Lemma 9.3 in \cite{G} or Section 2 in \cite{TH1}), although some new issues arise related to the presence of affine subsets of $\mathbb{H}_{m}^{d-1}$ and also the action of the Gauss map associated to $\mathbb{H}_{m}^{d-1}$. We include most of the details for the convenience of the reader, since it is worth illustrating how $\mathbb{H}_{m}^{d-1}$ differs from the case of the (elliptic) paraboloid or cone.

The main idea is the following: even though the intersection of $\mathbb{H}_{m}^{d-1}$ with a $k$-plane may have zero Gaussian curvature, there are limits to the loss of curvature in terms of the parameter $m$. Indeed, the surface $\mathbb{H}_{m}^{d-1}$ can contain affine subsets, but only of dimension less than or equal to $m$. The key quantitative tool is the following lemma.

Let $V_0$ be a $(k-1)$-dimensional subspace of $\R^{d-1}$ and let $W = V_{0} \times \R$. We set $$H_W = \mathbb{H}_{m}^{d-1} \cap W.$$ It is straightforward to check that one can parametrize $H_W$ with a quadratic form and hence the principal curvatures are constant along the surface. Let $V$ the $k$-dimensional subspace such that $G_0(\omega) \in V$ if and only if $\omega \in V_0$. 

\begin{lemma}\label{lem:eigenvalues} Let $I_{K,b} = (-1 + K^{-b}, 1 - K^{-b})$. Let $m(H_W)$ denote the minimum between the number of positive and negative principal curvatures of $H_W$ which are outside the interval $I_{K,b}$, and let $r(H_W)$ denote the number of principal curvatures which are in $I_{K,b}$. Then $$m(H_W) + r(H_W) \leq m $$ 
\end{lemma}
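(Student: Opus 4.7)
The plan is to reduce the assertion to a spectral statement about a compression matrix and invoke Poincar\'e's separation theorem (a form of Cauchy interlacing). Fix an orthonormal basis $v_{1},\dots,v_{k-1}$ of $V_{0}$ and let $U$ be the $(d-1)\times(k-1)$ matrix with these vectors as columns. Writing $\eta=Ut$ with $t\in\R^{k-1}$, the surface $H_{W}$ becomes the graph of the quadratic form $\phi(t)=\langle MUt,Ut\rangle=t^{T}At$ over $V_{0}$, where $A:=U^{T}MU$. Under the paper's identification of the (constant) principal curvatures of the quadric $H_{W}$ with the eigenvalues of the defining matrix $A$, the lemma becomes a counting statement about the spectrum of $A$ relative to $I_{K,b}$. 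The passage from geometric principal curvatures to eigenvalues of $A$ involves only the metric and normal Jacobians on the compact domain $V_{0}\cap B^{d-1}(0,1)$, which are uniformly bounded above and below, so any discrepancy between "lies in $I_{K,b}$" for curvatures and for eigenvalues is absorbed by a harmless additive constant in $b$.

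Next I would invoke Poincar\'e separation for $A=U^{T}MU$. With $\mu_{1}\geq\dots\geq\mu_{d-1}$ the eigenvalues of $M$—so $\mu_{1}=\dots=\mu_{d-1-m}=1$ and $\mu_{d-m}=\dots=\mu_{d-1}=-1$—and $\lambda_{1}\geq\dots\geq\lambda_{k-1}$ those of $A$, interlacing yields
\[
\mu_{i}\ \geq\ \lambda_{i}\ \geq\ \mu_{i+d-k},\qquad 1\leq i\leq k-1.
\]
For $i\leq k-1-m$ the right index satisfies $i+d-k\leq d-1-m$, so $\mu_{i+d-k}=1$; combined with $\lambda_{i}\leq\mu_{i}=1$ this forces $\lambda_{i}=1$. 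Hence at least $k-1-m$ eigenvalues of $A$ equal $+1$, and therefore at most $m$ of the eigenvalues satisfy $\lambda_{i}<1-K^{-b}$.

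Translating back: the principal curvatures of $H_{W}$ that fail to lie in the "positive outside $I_{K,b}$" region (those either in $I_{K,b}$ or $\leq -1+K^{-b}$) number at most $m$. In the notation of the lemma, writing $p$ for the count of positive principal curvatures outside $I_{K,b}$ and $n$ for the negative ones, this reads $r(H_{W})+n\leq m$. Since $m(H_{W})=\min(p,n)\leq n$, we conclude
\[
m(H_{W})+r(H_{W})\ \leq\ n+r(H_{W})\ \leq\ m,
\]
as desired.

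The only delicate step is the first one, namely certifying that the geometric principal curvatures of $H_{W}$ correspond sufficiently tightly to the algebraic eigenvalues of $A$ for the quantitative $K^{-b}$-scale interval condition to survive. Because the second fundamental form and the induced metric on $H_{W}$ are real-analytic and nondegenerate over the compact parameter region, this is essentially bookkeeping; the identification is lossless up to a constant factor that can be folded into $b$. With that in hand, the substance of the lemma is just Poincar\'e separation applied to the compression $U^{T}MU$ of the indefinite form $M$.
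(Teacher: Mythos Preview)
Your proof is correct and essentially equivalent to the paper's. The paper proves directly, by a dimension-count contradiction, the two inequalities $n(H_W) + r(H_W) \leq n(\mathbb{H}_m^{d-1})$ and $p(H_W) + r(H_W) \leq p(\mathbb{H}_m^{d-1})$; this is exactly the content of Poincar\'e separation applied to the compression $A = U^T M U$, so you are invoking by name the theorem that the paper reproves in place. One remark: your hedging about the ``delicate step'' is unnecessary here. In the paper's convention the principal curvatures of $H_W$ \emph{are} the eigenvalues of $A$ --- the surface is the graph of a quadratic form and this identification is made explicitly in the setup preceding the lemma --- so there is no quantitative discrepancy between geometric curvatures and algebraic eigenvalues to absorb into $b$, and the interlacing argument goes through cleanly at the stated scale.
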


\begin{proof} The proof is the same as an argument given in the proof of Proposition 3.2 in \cite{BD2}. We apply a rotation to $\R^{d-1}$ to assume that we can write $$V_0 = \text{span}\{e_1, ..., e_{k-1} \}$$ where $e_{i}$ are the standard basis vectors in $\R^{d}$. This changes the defining matrix $M$ for $\mathbb{H}_{m}^{d-1}$ but of course does not change any geometric properties of the surface (and in particular the eigenvalues of the new $M$ are still $\pm 1$).
	
	Then there is a symmetric $(k-1)\times (k-1)$ matrix such that $$H_W= \{(\omega, \langle A\omega, \omega \rangle) : \omega \in V_0 \}, $$ and moreover \begin{equation} \label{eq:parametrization} \langle \omega , A \omega \rangle = \langle \omega , M \omega\rangle, \ \ \ \omega \in V_0 \end{equation} We can find an orthonormal basis of $(d-1)$ eigenvectors $\eta^i$ for $M$ in $\R^{d-1}$, and an orthonormal basis of $k-1$ eigenvectors $v^i$ for $A$ in $V_0$. Their eigenvalues are the respective principal curvatures. After relabeling we assume the eigenvectors $v^i$ are ordered based on their eigenvalues being positive and outside $I_{K,b}$, then negative and outside $I_{K,b}$, and then finally those in $I_{K,b}$ (which for all purposes we treat as if they were 0). We also assume the eigenvectors $\eta^i$ are ordered based on their eigenvalues being positive then negative. 
	
	Let $p(\mathbb{H}_{m}^{d-1})$ and $n(\mathbb{H}_{m}^{d-1})$ denote the number of positive and negative eigenvalues of $M$, respectively. Then $m = \min(p(\mathbb{H}_{m}^{d-1}) , n(\mathbb{H}_{m}^{d-1})).$ Define eigenspaces $$X_{+} = \text{span}\{\eta^i : 1 \leq i \leq p(\mathbb{H}_{m}^{d-1}) \}, \ \ \ \ X_{-} = \text{span}\{\eta^i : p(\mathbb{H}_{m}^{d-1}) + 1 \leq i \leq d-1 \} $$ and $$X^{W}_{+} = \text{span}\{v^i : 1 \leq i \leq p(H_W) \},$$ $$ X_{-}^{W} = \text{span}\{v^i : p(H_W) + 1 \leq i \leq p(H_W) + n(H_W) \},  $$ $$X_{0}^{W} = \text{span}\{v^i : p(H_W) + n(H_W) + 1 \leq i \leq k-1 \}. $$
	
	\noindent We claim that \begin{equation}\label{eq:eigenEq1} n(H_W) + r(H_W) \leq n(\mathbb{H}_{m}^{d-1}) 
	\end{equation} and \begin{equation}\label{eq:eigenEq2} p(H_W) + r(H_W) \leq p(\mathbb{H}_{m}^{d-1}) 
	\end{equation} 
	
	\noindent These follow by dimension counting. For example, suppose \eqref{eq:eigenEq1} is false. Then $(X_{-}^{W} \oplus X_{0}^{W}) \cap X_{+}$ must contain a unit vector $u$. Indeed note that dim $X_{+} = p(\mathbb{H}_{m}^{d-1})$ and the dimension of $X_{-}^{W} \oplus X_{0}^{W} $ is $n(H_W) + r(H_W)$. Then if \eqref{eq:eigenEq1} fails we have $$\text{dim}(X_{-}^{W} \oplus X_{0}^{W}) > n(\mathbb{H}_{m}^{d-1}),$$ and the claim then follows since $p(\mathbb{H}_{m}^{d-1}) + n(\mathbb{H}_{m}^{d-1}) = d-1.$ Since  $(X_{-}^{W} \oplus X_{0}^{W}) \cap X_{+}$ contains a unit vector $u$ it follows by definition that $$\langle Au , u \rangle \leq 1 - K^{-b}$$ and also $$\langle Mu , u \rangle = 1.$$ But $\langle Au , u \rangle = \langle Mu , u \rangle$ since $u \in V_0$, contradiction. The proof of \eqref{eq:eigenEq2} is similar. 
	
	Now suppose that $m(H_W) + r(H_W) > m$. Then $$p(H_W) + r(H_W) > m, \ \ \ \ \ \ n(H_W) + r(H_W) > m$$ and from \eqref{eq:eigenEq1} and \eqref{eq:eigenEq2} we then obtain $$n(\mathbb{H}_{m}^{d-1}) > m,  \ \ \ \ \ \ p(\mathbb{H}_{m}^{d-1}) >m.$$ This contradicts the definition of $m$ and so the result follows. \end{proof}

The following lemma allows us to use lower-dimensional cases of Proposition \ref{prop:narrowDec0} for intersections that have enough curvature. 

\begin{lemma}\label{lem:lowerDim} Fix $l$ with $2 \leq l \leq d$. Let $V$ be an $l$-dimensional subspace of $\mathbb{R}^d$, and suppose $F = \sum_{\tau} F_{\tau}$ is such that each $\widehat{F_{\tau}}$ has support in a $K^{-2}$ neighbourhood of $\tau$, such that the normal to $\tau$ is contained in a $K^{-1}$-neighbourhood of $V$. 
	
	Let $W  = G_0^{-1}(V) \times \mathbb{R}$. If
	\[ \mathbb{H}_m^{d-1} \cap W = \mathbb{H}_m^{d-1} \cap G^{-1}(V)\]
	is a smooth $(l-1)$-dimensional surface with nonvanishing Gaussian curvature, then 
	\[ \pi_V(\supp \widehat{F} ) \subseteq \mathcal{N}_{CK^{-2}}( \pi_V(\mathbb{H}_m^{d-1}  \cap G^{-1}(V))), \]
	and $\pi_V(\mathbb{H}_m^{d-1}  \cap G^{-1}(V))$ is a smooth $(l-1)$-dimensional surface in $V$ with nonvanishing Gaussian curvature. Moreover, the sets 
	\[\mathcal{N}_{CK^{-2}}(\pi_V(\supp \widehat{F_{\tau}} )) \]
	are essentially disjoint. All implicit constants depend only on the lower bound for the magnitude of the Gaussian curvature of $\mathbb{H}_m^{d-1}  \cap G^{-1}(V)$.
\end{lemma}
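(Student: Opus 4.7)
I would prove the three assertions in sequence, after setting up coordinates. Parameterize $\mathbb{H}_m^{d-1}$ by $\xi \mapsto (\xi, \langle M\xi,\xi\rangle)$, set $V_\sharp := \{v \in \R^{d-1} : (v,0) \in V\}$, and pick any $q \in \R^{d-1}$ with $(q,-1) \in V$; then $G_0^{-1}(V) = \tfrac{1}{2}M^{-1}q + U$ where $U := \tfrac{1}{2}M^{-1}V_\sharp$. The hypothesis that $\mathbb{H}_m^{d-1} \cap G^{-1}(V)$ is an $(l-1)$-dimensional smooth surface of nonvanishing Gaussian curvature translates to $\dim V_\sharp = l-1$ together with $\langle M\cdot,\cdot\rangle$ being nondegenerate on $U$. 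It is also useful to note that $V^\perp = \{(y, \langle y, q\rangle) : y \in V_\sharp^\perp\}$.

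For the inclusion $\pi_V(\supp\widehat{F}) \subseteq \mathcal{N}_{CK^{-2}}(\pi_V(\mathbb{H}_m^{d-1} \cap G^{-1}(V)))$, since each $\supp\widehat{F_\tau}$ lies in the $K^{-2}$-neighbourhood of $\tau$ and $\pi_V$ is $1$-Lipschitz, it is enough to prove the same inclusion for $\pi_V(\tau)$. Given $p = (\xi, \langle M\xi, \xi\rangle) \in \tau$ I would seek $p' = (\xi', \langle M\xi', \xi'\rangle)$ with $\xi' \in G_0^{-1}(V)$ so that
\begin{equation*}
p - p' = \bigl(w,\; 2\langle M\xi', w\rangle + \langle Mw, w\rangle\bigr), \qquad w := \xi - \xi',
\end{equation*}
has $V$-projection of size $O(K^{-2})$. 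The quadratic piece $\langle Mw, w\rangle$ contributes $O(|w|^2)$, so the goal is to pick $w$ of size $O(K^{-1})$ (possible since $\omega_\tau$ is $O(K^{-1})$-close to $G_0^{-1}(V)$) such that the affine piece $(w, 2\langle M\xi', w\rangle)$ lies in $V^\perp$. Using the formula for $V^\perp$ above together with the constraint $(2M\xi',-1) \in V$, this becomes a linear system for $w \in V_\sharp^\perp$ whose solvability amounts to the invertibility of $w \mapsto \pi_{V^\perp}((2Mw,0))$ from $V_\sharp^\perp$ to $V^\perp$; this invertibility is precisely what nondegeneracy of $\langle M\cdot,\cdot\rangle$ on $U$ provides.

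For the smoothness and nonvanishing curvature of $\pi_V(\mathbb{H}_m^{d-1} \cap G^{-1}(V))$, I would parameterize the intersection as $u \in U \mapsto \psi(u) := (\omega_0+u, \langle M(\omega_0+u), \omega_0+u\rangle)$ with $\omega_0 := \tfrac{1}{2}M^{-1}q$, and analyse $\phi := \pi_V \circ \psi$; differentiating and using nondegeneracy of $\langle M\cdot,\cdot\rangle$ on $U$ shows $d\phi(u'') \neq 0$ for every $u'' \in U \setminus \{0\}$, so $\phi$ is an immersion onto a smooth $(l-1)$-dimensional submanifold of $V$, and that image admits a graph representation over the affine subspace $\pi_V(\omega_0+U) \subseteq V$ whose leading normal component is a constant multiple of $\langle Mu, u\rangle$, giving a nondegenerate second fundamental form. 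The essential disjointness of $\mathcal{N}_{CK^{-2}}(\pi_V(\supp\widehat{F_\tau}))$ then follows from the $\gtrsim K^{-1}$ separation of the cap centres in $\R^{d-1}$ together with the bilipschitz behaviour of $\pi_V$ on a neighbourhood of the intersection surface. I expect the main obstacle to be the displacement step: one must choose $w$ carefully (the naive orthogonal projection of $\xi$ onto $G_0^{-1}(V)$ does not kill enough first-order terms in $V$), and the failure when $\mathbb{H}_m^{d-1} \cap G^{-1}(V)$ degenerates to a flat affine subspace shows that this step genuinely requires the nonvanishing-curvature hypothesis.
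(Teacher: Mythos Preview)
Your approach is correct but differs from the paper's. The paper argues geometrically: it first proves that $\pi_V\colon W\to V$ is bi-Lipschitz via a compactness/contradiction argument on the Gauss map of $\mathbb{H}_m^{d-1}\cap W$ (if $\pi_{W_0}|_V$ had nontrivial kernel, the image of the Gauss map $G_{W_0}$ would drop dimension, contradicting nonvanishing curvature), and this bi-Lipschitz property immediately yields both the curvature of the projected surface and the essential disjointness of the projected caps; for the support inclusion it picks $x\in C_2\tau$ with $G(x)\in V$ and chains inclusions through the tangent plane, using that $\pi_V(T_x\mathbb{H}_m^{d-1})=T_{\pi_V(x)}\pi_V(\mathbb{H}_m^{d-1}\cap W)$ when $G(x)\in V$. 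Your route is instead explicitly algebraic, identifying the curvature hypothesis with nondegeneracy of $\langle M\cdot,\cdot\rangle$ on $U$ and solving the displacement problem by linear algebra. Your method makes the dependence on the curvature bound transparent (it is the condition number of the restricted form), whereas the paper's compactness argument yields constants only implicitly and recovers the quantitative bound $|\pi_V(w)|\ge(\inf_j|\lambda_j|)|w|$ in a separate remark; on the other hand, the paper's tangent-plane argument would adapt to non-quadratic hypersurfaces with less change. One small correction to your outline: once $w\in V_\sharp^\perp$ and $\xi'\in G_0^{-1}(V)$, the condition $(w,2\langle M\xi',w\rangle)\in V^\perp$ is automatic (since $2M\xi'-q\in V_\sharp$ forces $2\langle M\xi',w\rangle=\langle q,w\rangle$), so the actual content of the displacement step is the transversality $U\oplus V_\sharp^\perp=\mathbb{R}^{d-1}$, which is indeed equivalent to your nondegeneracy hypothesis but is not quite the invertibility statement you wrote.
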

\begin{proof} Let $W_0$ be the unique $l$-dimensional subspace of $\mathbb{R}^d$ parallel to $W$. It will first be shown that $\pi_{W_0}: V \to W_0$ is bi-Lipschitz. 
	The composition $\pi_{W_0} \circ G$ is nonvanishing since $e_d \in W$, and so
	\[ G_{W_0} = \frac{ \pi_{W_0} \circ G}{|\pi_{W_0} \circ G|}, \]
	where $G_{W_0}: \mathbb{H}_m^{d-1} \cap G^{-1}(V) \to S^{d-1} \cap W_0$ is the Gauss map on $\mathbb{H}_m^{d-1} \cap G^{-1}(V)$. This will be used to show that
	\begin{equation} \label{biLipschitz} |\pi_{W_0}(v) | \sim |v|, \end{equation}
	for every $v \in V$. Suppose for a contradiction that \eqref{biLipschitz} fails. Then by compactness there exists $v \in V \cap S^{d-1}$ such that 
	\[ |\pi_{W_0}(v)| = 0. \]
	Then $\pi_{W_0}$ maps $V$ into a subspace $E \subseteq W_0$ of dimension $< l$, and so the image of $G_{W_0}: \mathbb{H}_m^{d-1} \cap G^{-1}(V) \to S^{d-1} \cap W_0$ is contained in $ S^{d-1} \cap E$, which has dimension $< l-1$. But $G_{W_0}$ is locally bi-Lipschitz since $\mathbb{H}_m^{d-1} \cap G^{-1}(V)$ has nonvanishing Gaussian curvature, so this is a contradiction. 
	
	This shows that $\pi_{W_0}: V \to W_0$ is bi-Lipschitz, and this implies that $\pi_V : W_0 \to V$ is bi-Lipschitz, since
	\[ |w|^2 = \left\langle  \pi_V(w), \left( \pi_{W_0}|_V \right)^{-1}(w) \right\rangle \lesssim |\pi_V(w)|  |w|, \]
	for any $w \in W_0$. Therefore $\pi_V : W \to V$ is bi-Lipschitz, and by compactness the bi-Lipschitz constant depends only on the lower bound of the Gaussian curvature of $\mathbb{H}_m^{d-1} \cap G^{-1}(V)$. This implies that $\pi_V(\mathbb{H}_m^{d-1}  \cap G^{-1}(V))$ is a smooth $(l-1)$-dimensional surface in $V$ with nonvanishing Gaussian curvature, and the sets 
	\[\mathcal{N}_{CK^{-2}}(\pi_V(\supp \widehat{F_{\tau}} )) \]
	are essentially disjoint. 
	
	For each cap $\tau$, there exists a point $x \in C_2\tau$ with $G(x) \in V$, since $G$ is locally bi-Lipschitz, where $C_2$ is a large constant. The tangent plane at $x$ satisfies 
	\begin{align*} T_{\pi_V(x)}\pi_V(\mathbb{H}^{d-1}_m \cap W) &= \pi_V( T_x(\mathbb{H}^{d-1}_m \cap W)) \\
	&= \pi_V( T_x(\mathbb{H}^{d-1}_m)). \end{align*}
	The second line follows from the fact that $\pi_V( T_x(\mathbb{H}^{d-1}_m))$ is only $(l-1)$-dimensional (since $G(x) \in V$), and contains $\pi_V(T_x(\mathbb{H}^{d-1}_m \cap W))$ which is also $(l-1)$-dimensional. Let $ T_x'(\mathbb{H}^{d-1}_m) = x+  T_x(\mathbb{H}^{d-1}_m)$. Then
	\begin{align*} \pi_V\left(\supp \widehat{F_{\tau}}\right) &\subseteq \pi_V( \mathcal{N}_{K^{-2}}(\tau)) \\
	& \subseteq \mathcal{N}_{K^{-2}}(\pi_V(\tau)) \\
	& \subseteq \mathcal{N}_{C_1K^{-2}}(\pi_V( T_x' \mathbb{H}^{d-1}_m \cap B(x,C_1K^{-1}) ) ) \\
	&\subseteq \mathcal{N}_{C_1K^{-2}}(\pi_V( T_x' \mathbb{H}^{d-1}_m) \cap B_V(\pi_V(x),C_1K^{-1}) ) \\
	&= \mathcal{N}_{C_1K^{-2}}(T_{\pi_V(x)}' \pi_V(\mathbb{H}^{d-1}_m \cap W) \cap B_V(\pi_V(x),C_1K^{-1}) ) \\
	&\subseteq \mathcal{N}_{CK^{-2}}(\pi_V(\mathbb{H}^{d-1}_m \cap W)) \\
	&= \mathcal{N}_{CK^{-2}}(\pi_V(\mathbb{H}^{d-1}_m \cap G^{-1}(V))). \end{align*} 
	This finishes the proof.  \end{proof} 

\begin{remark} It is possible that the intersection $\mathbb{H}_{m}^{d-1} \cap W$ has Gaussian curvature near 0, in which case the conclusion of Lemma \ref{lem:lowerDim} can fail. For example, suppose for simplicity we are in the case $d = 3$ and after applying a rotation assume the phase is of the form $\xi_3 = \xi_1 \xi_2$. Suppose the normals are contained in a $K^{-1}$-neighborhood of the vector space $$V = \{\xi \in \R^3 : \xi_1 = 0 \}.$$ Then if $$V_0 = \{\xi \in \R^2 : \xi_2 = 0  \}$$ it follows that the input function is supported in an $O(K^{-1})$-neighborhood of $V_{0}$. In this case the projection to $V$ of the support of $\widehat{Ef}$ may not be contained in an $O(K^{-2})$-neighborhood of $\pi_V(H_W)$ (where as before $W = V_0 \times \R$). For example, if $f \sim 1$ near the region where $|\xi_1| \sim 1$ then the projection of the support of $\widehat{Ef}$ is spread out in the interval where $|\xi_3| \leq K^{-1}$. Note however that in this case the intersection $H_W$ has zero Gaussian curvature and the projection $\pi_V$ is not bi-Lipschitz.  

If the intersection $H_W$ has Gaussian curvature bounded above by some $K^{-\sigma}$ then similar arguments show that the conclusion of Lemma \ref{lem:lowerDim} can fail. However Lemma \ref{lem:eigenvalues} will allow us to choose our slices so that the intersection has principal curvatures near 1 in absolute value, avoiding this issue. 
\end{remark}

\begin{remark}\label{rmk:lip} The dependence of the Lipschitz constant on the curvature in Lemma \ref{lem:lowerDim} can be made more quantitative. Assume $W$ is a subspace (which we can in application) and let $A$ be the symmetric $(k-1)\times (k-1)$ matrix such that (in appropriate coordinates) $$ \langle A \omega, \omega \rangle = \langle M \omega , \omega \rangle, \ \ \ \omega \in W \cap \R^{d-1}.$$ Now let $\{w_1,...,w_{k-1} \}$ be an orthonormal basis of $W\cap \R^{d-1}$ consisting of eigenvectors for $A$. Then $\{Mw_1, ..., Mw_{k-1}, e_d \}$ is an orthonormal basis for $V$ since $$Mw_{i} - e_d = G_0 (w_i) \in V.$$ 
	
	Since $\langle Aw, w \rangle = \langle Mw, w\rangle$ when $w \in W \cap \R^{d-1}$ we must have $$\langle M(w_i - w_j), w_i - w_j \rangle = \langle A(w_i - w_j), w_i - w_j \rangle $$ and hence $\langle Mw_j, w_i \rangle = \langle A w_j, w_i \rangle$. Now if $v = \sum_{i} v_i Mw_i + v_d e_d \in V$ then \begin{align*} \pi_{W}(v) &= \sum_{j}  \big(\sum_{i} v_i \langle Mw_i, w_j \rangle \big) w_j + v_d e_d \\ &= \sum_{j}  \big(\sum_{i} v_i \langle Aw_i, w_j \rangle \big) w_j + v_d e_d  \\ &= \sum_{j}  (\lambda_j v_j) w_j + v_d e_d   \end{align*} where the $\lambda_j$ are the eigenvalues. Since the eigenvalues are bounded in absolute value by 1 it follows that $$|\pi_{W}(v)|^2 = \sum_{j}\lambda_{j}^2 v_{j}^2 + v_{d}^2 \geq (\inf_{j} |\lambda_j| )^{2}|v|^2.$$ Therefore $$|\pi_{W}(v)| \geq (\inf_{j} |\lambda_j| ) |v| $$ where the $\lambda_j$ are the principal curvatures of $H_W$ in $W$. As a consequence the bi-Lipschitz constant of $\pi_V : W \rightarrow V$ is also bounded away from 0, and in particular $$|\pi_V (w)| \geq (\inf_{j} |\lambda_j|)|w|, \ \ \ w \in W.$$ 
\end{remark}

Finally we recall the following `trivial' decoupling result which allows us to eliminate directions with small curvature. 

\begin{prop}[Flat Decoupling] \label{prop:flatDecoupling} Suppose $\mathcal{T}$ is a collection of finitely-overlapping and parallel rectangles $S$ in $B^{d}(0,2)$. Let $F = \sum_{S} F_{S}$ with $\widehat{F}_{S}$ supported in $S$. Then one has $$\| F \|_{L^{p}(\R^d)} \leq C(\# \mathcal{T})^{\frac{1}{2} - \frac{1}{p}}  \big( \sum_{S \in \mathcal{T} } \|F_{S}\|^{2}_{L^{p}(\R^d) } \big)^{\frac{1}{2}}.$$
\end{prop}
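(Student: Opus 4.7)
The plan is to reduce to the two endpoints $p=2$ and $p=\infty$ and then interpolate, since the statement is a standard orthogonality-plus-Cauchy--Schwarz result with no geometric content beyond finite overlap of the Fourier supports.

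First, I would verify the case $p=2$. Because the rectangles $S \in \mathcal{T}$ are finitely overlapping and each $\widehat{F_S}$ is supported in $S$, Plancherel's theorem gives
\[
\|F\|_2^2 \;=\; \bigl\|\textstyle\sum_S \widehat{F_S}\bigr\|_2^2 \;\lesssim\; \sum_S \|\widehat{F_S}\|_2^2 \;=\; \sum_S \|F_S\|_2^2,
\]
where the implicit constant depends only on the overlap multiplicity. This is the desired inequality with $(\#\mathcal{T})^{1/2-1/2}=1$, so the endpoint $p=2$ is free.

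Next I would handle the case $p=\infty$. By the triangle inequality followed by Cauchy--Schwarz on the index set $\mathcal{T}$,
\[
\|F\|_\infty \;\le\; \sum_{S\in\mathcal{T}} \|F_S\|_\infty \;\le\; (\#\mathcal{T})^{1/2}\Bigl(\sum_{S\in\mathcal{T}} \|F_S\|_\infty^2\Bigr)^{1/2},
\]
which is the claimed inequality for $p=\infty$ with the sharp factor $(\#\mathcal{T})^{1/2}$.

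Finally I would interpolate. Define the linear operator $T:\ell^2_S(L^p_x) \to L^p_x$ by $T(\{F_S\}) = \sum_S F_S$, viewing the index $S$ as ranging over the fixed finite set $\mathcal{T}$. The two endpoint estimates above say that $\|T\|_{\ell^2(L^2)\to L^2} \lesssim 1$ and $\|T\|_{\ell^2(L^\infty)\to L^\infty} \lesssim (\#\mathcal{T})^{1/2}$. Vector-valued Riesz--Thorin interpolation (applied with $\ell^2_S$ as the fixed target/source sequence space, which is self-interpolating) then yields for any $p\in[2,\infty]$ with $\theta = 1 - 2/p$,
\[
\|T\|_{\ell^2(L^p)\to L^p} \;\lesssim\; 1^{1-\theta}\cdot (\#\mathcal{T})^{\theta/2} \;=\; (\#\mathcal{T})^{\frac{1}{2}-\frac{1}{p}},
\]
which is exactly the bound in the proposition. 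No step here is substantial; the only point requiring a moment's care is that the interpolation is vector-valued, but this is standard and the fact that the target sequence space is $\ell^2$ on both ends (so one only interpolates in the spatial variable) makes it a direct application of the scalar Riesz--Thorin theorem applied to each sequence of simple functions.
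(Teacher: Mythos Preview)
Your approach is essentially the same as the paper's: both dispatch $p=2$ by Plancherel, $p=\infty$ by Cauchy--Schwarz, and then interpolate in the vector-valued setting. There is, however, one technical point you gloss over that the paper handles explicitly. Your operator $T(\{F_S\}) = \sum_S F_S$, viewed as a map on \emph{all} of $\ell^2_S(L^2)$, does not satisfy the $O(1)$ bound at $p=2$: the Plancherel step used the hypothesis that $\widehat{F_S}$ is supported in $S$, but this constraint is not built into your $T$. On general inputs the best $\ell^2(L^2)\to L^2$ bound for your $T$ is $(\#\mathcal{T})^{1/2}$, which would kill the interpolation.

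The paper fixes this by defining instead $T(\{f_S\}) = \sum_S f_S \ast \phi_S$, where $\phi_S$ are Schwartz functions with $\widehat{\phi_S}=1$ on $S$ (and, implicitly, supported on slight enlargements of the $S$, so finite overlap is preserved). This forces the Fourier-support condition into the operator itself, so $T$ is genuinely bounded $\ell^2(L^2)\to L^2$ with constant $O(1)$, and Riesz--Thorin then applies cleanly on the full space. With that small adjustment your argument matches the paper's.
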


\begin{proof} The case $p = \infty$ is just the Cauchy-Schwarz inequality, and when $p =2$ the proposition follows from Plancherel's theorem. The remaining cases follow by vector-valued interpolation (as in Lemma 4.5 in \cite{De}, for example).
	
Indeed, notice that if $\phi_S$ are Schwartz functions with $\widehat{\phi_S} = 1$ on $S$ then $F = \sum_{S} F_{S} \ast \phi_{S}$. Let $\mathcal{S}$ denote the collection of families of functions of the form $f = \{f_S\}_{S \in \mathcal{T}}$. If we define an operator on $\mathcal{S}$ by $Tf = \sum_{S}f_{S} \ast \phi_{S}$ then the argument summarized above shows that $T$ is bounded from $\ell_{\mathcal{S}}^2 L^{\infty}$ to $L^{\infty}$ with operator norm $(\# \mathcal{T})^{\frac{1}{2}},$ and also bounded from $\ell_{\mathcal{S}}^2 L^{2}$ to $L^{2}$ with operator norm $O(1)$; moreover the operator norms only depend on the amount of overlap (which is $O(1)$) and are otherwise independent of the collection $\mathcal{T}$. Since $F = T\{ F_S\}_{S \in \mathcal{T}}$ the desired result then follows after interpolating the above estimates for $T$ in the vector-valued setting. 
\end{proof}

\noindent Suppose the rectangles $S$ are $O(K^{-1})$-neighborhoods of lower-dimensional rectangles in $\R^{d-1} \times \R$ and $F = Ef \cdot w_{Q}$, where $Q$ is a $K^{2}$-cube and $w_Q$ is a smooth weight adapted to $Q$. Then the above proposition can be localized to $Q$ by choosing $F_S$ to be concentrated in $Q$, and we will use this localized version below. This can be achieved for example by taking $F_S$ to consist of scale-$K$ wave packets which are concentrated near $S \cap \mathbb{H}^{d-1}_{m}$ in Fourier space and concentrated in $Q$ spatially.

\begin{proof}[Proof of Proposition \ref{prop:narrowDec}] Fix a $k$-dimensional plane $V$ as above and let $W$ be the $k$-plane $W = V_0 \times \R$, where $V_0 = G_{0}^{-1}(V)$. Using affine invariance assume that $V_0$ is a subspace. To simplify notation let $F$ denote $Ef\cdot w_{B_{K^2 }}$. We let $H_W = \mathbb{H}_{m}^{d-1} \cap W$ and define $m(H_W)$ and $r(H_W)$ as in the statement of Lemma \ref{lem:eigenvalues}. 
	
	We identify $W$ with $\R^{k}$. As above we can parameterize $H_W$ as the graph of a possibly degenerate quadratic form whose defining matrix $A$ is symmetric. We may find an orthonormal basis of $\R^k$ consisting of eigenvectors for $A$, and the respective eigenvalues $\lambda_{i}$ are the principal curvatures of $H_W$. We let $r(H_W)$ denote the number of these eigenvalues in the interval $I_{K,2}$. We perform a flat decoupling (Proposition \ref{prop:flatDecoupling}) in the directions of the eigenvectors with eigenvalues inside $I_{K,2}$, contributing a loss of $$K^{r(H_W)(\frac{1}{2} - \frac{1}{p} ) }.$$ We claim that by Lemma \ref{lem:eigenvalues} the resulting slices are $O(K^{-1})$-neighborhoods of $(k-r(H_W))$-planes $W_r$ such that $W_r \cap \mathbb{H}_{m}^{d-1} := H_{W_r}$ is a surface of dimension $k - r(H_W) - 1$ with principal curvatures bounded below in absolute value by $1- K^{-2}$.
	
	More precisely, after applying a rotation we can assume that the standard basis vectors $e_1, ... e_{k-1}$ are eigenvectors for $A$ and moreover by Lemma \ref{lem:eigenvalues} that $$\{\lambda_{1 }, ..., \lambda_{p(H_W) + n(H_W)} \} \subset I_{K,2}^{c}.$$ In these coordinates we let $W_{r,0}$ be the subspace $$W_{r,0} = \{\omega \in V_0 :   \omega_{p(H_W) + n(H_W) + 1} = ... = \omega_{k-1} = 0\}$$ and let $\{W_r\}$ be the family of $K^{-1}$-separated planes in $\R^{d-1}$ obtained by translating $W_{r, 0} \times \R$ in axis-parallel directions. Then after applying a flat decoupling in each of the $r(H_W)$ directions $e_{p(H_W) + n(H_W) + 1}, ..., e_{k-1}$ we can assume that $\widehat{F}$ is supported in a $K^{-1}$-neighborhood of one of the $W_r$. Since $\widehat{F}$ is supported in $B^{d-1}(0,2)$ this contributes a loss of $K^{r(H_W)(\frac{1}{2} - \frac{1}{p} ) }$ to our main estimate, as claimed above. Moreover, $H_{W_r}$ is parametrized by a quadratic form whose defining matrix has each of its $k - r(H_W) -1$ eigenvalues outside of $I_{K,2}$. Hence $H_{W_r}$ is a smooth surface of dimension $k - r(H_W) - 1$ and the claimed lower bounds on the principal curvatures follow. 
	
	Now let $F_{W_{r}}$ denote the Fourier restriction of $F$ to a $K^{-1}$-neighborhood of $W_r$. Let $V_r$ be the subspace of dimension $k - r(H_V)$ spanned by vectors in $G(W_r)$. By Lemma \ref{lem:lowerDim} we therefore know that if we restrict $F_{W_r}$ to a plane $V_{r}'$ parallel to $V_r$ then the Fourier transform of $(F_{W_r})|_{V_{r}'}$ is supported in an $O(K^{-2})$ neighborhood of the projection of $H_{W_r}$ to $V_r$. Moreover one checks using an argument similar to Remark \ref{rmk:lip} that the signs of the curvatures of the surface are preserved by the projection. Then by Proposition \ref{prop:narrowDec0} we can decouple the support of $\widehat{F}|_{N_{K^{-1}}(W_r) }$ into $K^{-1}$ caps with a loss of $$C_{\eta}K^{m(H_W)(\frac{1}{2} - \frac{1}{p}) + \eta } $$ as long as $$2 \leq p \leq \frac{ 2((k - r(H_W)) - m(H_W) + 1) }{(k - r(H_W)) - m(H_W) - 1}.$$ We can stitch together these steps in the usual way using Fubini's theorem and Minkowski's inequality to obtain 
	
	\begin{equation}\label{eq:narrowDecEq2}
	\|Ef\|_{L^p (Q)} \lesssim_{\eta} K^{[m(H_W) + r(H_W)]( \frac{1}{2} - \frac{1}{p} )  + \eta} \bigg( \sum_{\tau} \|Ef_{\tau}\|_{L^p (w_Q) }^{2}\bigg)^{\frac{1}{2}}
	\end{equation} as long as $p \leq \frac{ 2((k - r(H_W)) - m(H_W) + 1) }{(k - r(H_W)) - m(H_W) - 1}$ (for similar `slicing' arguments see for example Lemma 9.3 in \cite{G} or Section 2 in \cite{TH1}).
	
	The argument is complete if $m(H_W) + r(H_W) = m$, so suppose $$m(H_W) + r(H_W) < m$$ (this is the only remaining case by Lemma \ref{lem:eigenvalues}). Note that in this case $$\frac{2(k-m +1)}{k-m-1} >  \frac{ 2((k - r(H_W)) - m(H_W) + 1) }{(k - r(H_W)) - m(H_W) - 1}.$$By interpolating between \eqref{eq:narrowDecEq2} and the trivial Cauchy-Schwarz estimate $$ \|Ef\|_{L^{\infty} (Q)} \lesssim K^{ \frac{k-1}{2} } \bigg( \sum_{\tau \in V} \|Ef_{\tau}\|_{L^{\infty} (w_Q) }^{2}\bigg)^{\frac{1}{2}}$$ we obtain $$\|Ef\|_{L^p (Q)} \lesssim_{\eta} K^{ \frac{k-1}{2} - \frac{(k+1)}{p} + \eta} \bigg( \sum_{\tau} \|Ef_{\tau}\|_{L^p (w_Q) }^{2}\bigg)^{\frac{1}{2}}, \ \ \ \ p = \frac{2(k - m+ 1)}{k - m - 1}. $$ A bit of algebra then shows that $$\frac{k-1}{2} - \frac{(k+1)}{p} = m (\frac{1}{2} - \frac{1}{p} ) , \ \ \ \ \ p = \frac{2(k - m + 1)}{k - m - 1}.$$ This completes the proof.

\end{proof}

\section{Upper bounds for \texorpdfstring{$\beta(\alpha, \mathbb{H}^{d-1}_m)$}{beta}}
\label{three}
By Proposition \ref{prop:equiv}, counterexamples to localized weighted restriction estimates imply upper bounds on $\beta(\alpha, \mathbb{H}^{d-1}_m)$. It is convenient to work with a discretized version of the weighted restriction estimates. Let $X$ be a union of unit lattice cubes in $B^{d} (0, R)$. We abuse notation and write $Q \in X$ if $Q$ is a lattice unit cube with $Q \subset X$. We say $X$ is $\alpha$-dimensional if $$\gamma^{d}_{\alpha}(X) := \sup_{\substack{ B^{d}(z, r) \\ z \in \R^n, \  r \geq 1 }} \frac{\# \{ Q \in X : Q \subset B^{d}(z,r) \} }{r^{\alpha}} \in [c, C]$$ with $c,C$ independent of $R$. We let $\overline{s_{d}}(\alpha)$ be the infimum over all $s\geq 0$ such that $$\|Ef\|_{L^2(X)} \lesssim_s R^{s}\|f\|_{L^2}, \ \ \ \ \ \ \text{supp}(f) \subset B^{d-1}(0,2)$$ for all $\alpha$-dimensional $X$ contained in $B^{d}(0,R).$ 

\begin{lemma}\label{lem:latticeCube} One has $$s_{d}(\alpha) = \overline{s_d}(\alpha).$$
\end{lemma}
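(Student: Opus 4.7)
The strategy is to prove the two inequalities $\overline{s_d}(\alpha) \leq s_d(\alpha)$ and $s_d(\alpha) \leq \overline{s_d}(\alpha)$ separately.

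For the first inequality, given an $\alpha$-dimensional union of unit lattice cubes $X \subset B^d(0,R)$, I would define the measure $d\mu_R = c\,\chi_X\,dx$ with $c = R^\alpha/|X|$. Then $\mu_R$ has total mass $R^\alpha$ and hence corresponds via the scaling $\mu_R(A) = R^\alpha \mu(R^{-1}A)$ to a probability measure $\mu$ on $B^d(0,1)$. The upper bound on $\gamma^d_\alpha(X)$ implies $c_\alpha(\mu_R) \lesssim c$: at scales $r \geq 1$ this follows from the cube-counting bound, while at scales $r < 1$ one uses the density bound $c$ together with $\alpha \leq d$. Applying the definition of $s_d(\alpha)$ to $\mu$ and cancelling the common factor of $c$ on both sides yields $\|Ef\|_{L^2(X)}^2 \lesssim R^{2 s_d(\alpha)}\|f\|_2^2$, i.e.\ $\overline{s_d}(\alpha) \leq s_d(\alpha)$.

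For the reverse inequality, fix a probability measure $\mu$ on $B^d(0,1)$ with $c_\alpha(\mu) \lesssim 1$, partition $B^d(0,R)$ into unit lattice cubes $\{Q_j\}$, and set $a_j = \mu_R(Q_j) \lesssim 1$. Since $f$ is supported in $B^{d-1}(0,2)$, the function $|Ef|^2$ has Fourier support in a fixed compact set and is thus locally constant at unit scale: $|Ef(x)|^2 \lesssim (|Ef|^2 \ast \eta)(x)$ for some Schwartz $\eta$. This reduces the continuous integral to
\[ \int |Ef|^2 \, d\mu_R \lesssim \sum_j a_j \int_{Q_j^*} |Ef|^2 \, dy, \]
where $Q_j^*$ is a slight enlargement of $Q_j$. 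Dyadically pigeonhole the weights as $X_k = \{j : a_j \sim 2^{-k}\}$ for $k \in \{0, 1, \ldots, O(\log R)\}$, and pick a dominant level $k$ so that $X_k$ carries (up to a $\log R$ factor) the whole sum. The covering bound for $\mu_R$ gives $\gamma^d_\alpha(X_k) \lesssim 2^k$; splitting $X_k$ into $O(2^k)$ subsets each of $\alpha$-dimensional constant $O(1)$ and applying $\overline{s_d}(\alpha)$ to each piece produces
\[ \int |Ef|^2\, d\mu_R \lesssim (\log R) \cdot 2^{-k} \cdot 2^k \, R^{2 \overline{s_d}(\alpha)} \|f\|_2^2 \lesssim R^{2 \overline{s_d}(\alpha) + \varepsilon} \|f\|_2^2, \]
which gives $s_d(\alpha) \leq \overline{s_d}(\alpha)$.

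The main obstacle is the splitting of $X_k$ into $O(2^k)$ pieces with $\gamma^d_\alpha \lesssim 1$. I would resolve it by random coloring: assign each cube of $X_k$ independently and uniformly one of $\lceil 2^k \rceil$ colors. In any ball $B(z,r)$ with $r \geq 1$, the cube count in $X_k$ is controlled both by the $\gamma^d_\alpha$ bound ($\lesssim 2^k r^\alpha$) and trivially by volume ($\lesssim r^d$), so the expected per-color count is $\lesssim \min(r^\alpha, r^d/2^k) \lesssim r^\alpha$. Chernoff concentration in the range where $r^\alpha$ exceeds a logarithmic threshold, combined with the deterministic $O(1)$ bound at scale $r \sim 1$ and a union bound over a dyadic net of balls and all colors, produces with positive probability a coloring in which every color class simultaneously satisfies $\gamma^d_\alpha \lesssim 1$. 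An alternative is to bypass the splitting step by directly proving $\|Ef\|_{L^2(X)}^2 \lesssim \gamma^d_\alpha(X) R^{2\overline{s_d}(\alpha) + \varepsilon}\|f\|_2^2$ as a subadditive statement in $\gamma^d_\alpha$; this reduces to essentially the same combinatorial point but may be packaged more cleanly.
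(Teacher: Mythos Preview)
Your first direction agrees with the paper's: both build $\mu$ as rescaled Lebesgue measure on $X$ and check $c_\alpha(\mu)\lesssim 1$ by splitting into $r\ge 1$ (cube-count bound from $\gamma^d_\alpha(X)\lesssim 1$) and $r<1$ (density bound, using $\alpha\le d$).

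For $s_d(\alpha)\le \overline{s_d}(\alpha)$ the paper only says ``local constancy plus a standard pigeonholing, see Du--Zhang'', so your outline is already more detailed than theirs and follows the same route. The gap is in the random-coloring step. Your two regimes do not meet: Chernoff needs $r^\alpha\gtrsim\log(R^d\cdot 2^k)$ to survive the union bound over all balls and colors, while your ``deterministic $O(1)$ bound at $r\sim 1$'' is really the volume bound $\#\{Q\subset B_r\}\le Cr^d$, which at the intermediate scales $1\ll r\lesssim(\log R)^{1/\alpha}$ only yields $\gamma^d_\alpha(\text{color class})\lesssim(\log R)^{(d-\alpha)/\alpha}$, not $\lesssim 1$. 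Since the paper's definition of ``$\alpha$-dimensional'' requires $\gamma^d_\alpha$ bounded independently of $R$, you cannot invoke $\overline{s_d}(\alpha)$ on such a color class and then absorb the loss into $R^\varepsilon$. The paper's appendix does not confront this either; the Du--Zhang argument it cites carries an explicit $\gamma$ factor throughout rather than normalising to $\gamma\sim 1$. Your ``alternative'' of working directly with the $\gamma$-weighted inequality $\|Ef\|_{L^2(X)}^2\lesssim\gamma^d_\alpha(X)\,R^{2s+\varepsilon}\|f\|_2^2$ is exactly how this is handled in practice---with that in hand the pigeonholing closes with no splitting required---but deducing the $\gamma$-weighted form purely from the $\gamma\sim 1$ hypothesis is precisely the combinatorial splitting you flag as the main obstacle, and neither your sketch nor the paper's actually completes it.
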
 

\noindent A proof of this lemma can be found in the Appendix.

\subsection{A counterexample based on signs of principal curvatures} Recall that $p_M$ is the number of positive entries in $M$ and $n_M$ is the number of negative entries in $M$. Also recall $$m = \min(p_M, n_M).$$ We prove the following \begin{prop}\label{prop:upperBound} Suppose $\alpha \in [d-m-1, d -m]$. Then $$s_{d}(\alpha) \geq \frac{\alpha-m}{2(d-2m)}$$ and therefore $$\beta(\alpha, \mathbb{H}^{d-1}_m) \leq \alpha - \frac{\alpha - m}{d-2m}. $$
\end{prop}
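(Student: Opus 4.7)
By Proposition \ref{prop:equiv} and Lemma \ref{lem:latticeCube}, the claim reduces to exhibiting an $\alpha$-dimensional union $X$ of lattice unit cubes in $B^d(0,R)$ together with $f \in L^2$ supported in $B^{d-1}(0,2)$ satisfying $\|Ef\|_{L^2(X)}\gtrsim R^{(\alpha-m)/(2(d-2m))}\|f\|_{L^2}$. The plan is to construct this pair explicitly, exploiting the $m$-dimensional affine subspaces contained in $\mathbb{H}^{d-1}_m$. Introducing coordinates $u_i = \xi_i + \xi_{d-1-m+i}$, $v_i = \xi_i - \xi_{d-1-m+i}$ for $i=1,\ldots,m$, and keeping $\eta'' = (\xi_{m+1},\ldots,\xi_{d-1-m})$ unchanged, the height function becomes $\phi(u,v,\eta'')=\sum_{i=1}^m u_i v_i + |\eta''|^2$, so the $m$-dimensional flat subspaces on $\mathbb{H}^{d-1}_m$ correspond to $\{v=0,\eta''=0\}$ parametrized by $u$.

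First I would take $f=\mathbf{1}_T$ where $T$ is the Knapp-type slab neighborhood
\[T = \{|u_i|\le R^{-\rho},\; |v_i|\le R^{-1/2},\; |\eta''_j|\le R^{-1/2}\}\]
of a flat subspace, with $\rho\in[0,1/2]$ a parameter to be chosen depending on $\alpha$. Working in dual coordinates $(a,b,x_{\eta''},x_d)$ with $a$ dual to $u$ and $b$ dual to $v$, analyze
\[Ef(x)=c\int_T e^{i(a\cdot u+b\cdot v+x_{\eta''}\cdot\eta''+x_d(u\cdot v+|\eta''|^2))}\,du\,dv\,d\eta''\]
by stationary phase in the mixed term $x_d\,u\cdot v$. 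This yields two contributions to $|Ef|$: a \emph{near-origin} regime $|x_d|\lesssim R^{\rho+1/2}$ where $|Ef|\gtrsim|T|$ on the product box $|a_i|\lesssim R^\rho,\,|b_i|\lesssim R^{1/2},\,|x_{\eta''}|\lesssim R^{1/2}$, and a \emph{far} regime $|x_d|\in[R^{\rho+1/2},R]$ where stationary phase in $(u,v)$ gives $|Ef(x)|^2\gtrsim|x_d|^{-2m}R^{-(d-1-2m)}$ on a cone-like region $|a_i|\lesssim|x_d|R^{-1/2},\,|b_i|\lesssim|x_d|,\,|x_{\eta''}|\lesssim R^{1/2}$.

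Next I would choose $X$ as an $\alpha$-dimensional subset capturing contributions from both regimes. To drop the local dimension below $d$ one must restrict to $|a_i|\lesssim 1$, and the allowed extent in the $x_d$-direction is then dictated by the requirement $\gamma^d_\alpha(X)\lesssim 1$. A scale-by-scale cube-counting check at each of the characteristic scales $r\lesssim 1$, $1\lesssim r\lesssim R^{1/2}$, $R^{1/2}\lesssim r\lesssim R^{\rho+1/2}$, and $r\gtrsim R^{\rho+1/2}$ will fix $\rho=\rho(\alpha)$ as a linear function of $\alpha\in[d-m-1,d-m]$, with the critical-scale constraint arising at $r\approx R^{1/2}$. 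Summing the near-origin ``Knapp'' contribution and the far stationary-phase contribution to $\|Ef\|_{L^2(X)}^2$, and comparing with $\|f\|_{L^2}^2=|T|$, then yields the claimed ratio $R^{(\alpha-m)/(2(d-2m))}$.

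The main obstacle will be balancing the two contributions correctly. The near-origin Knapp contribution alone saturates well below the claimed exponent, so the argument depends crucially on the far stationary-phase regime; this regime exists precisely because of the indefinite hyperbolic signature of $\phi$ (giving genuine stationary points for the mixed term $x_d u\cdot v$ at arbitrarily large $|x_d|$) and has no analogue in the elliptic case. Quantifying this regime requires careful tracking of the $|x_d|^{-2m}$ decay of $|Ef|^2$ against the growth of the valid region in $(a,b)$ as $|x_d|$ increases, and the scale-by-scale dimension check at $r\approx R^{1/2}$ must match the parameter $\rho(\alpha)$ so that the exponents of $R$ in $\|Ef\|_{L^2(X)}^2$ and $\|f\|_{L^2}^2$ differ by exactly $(\alpha-m)/(d-2m)$.
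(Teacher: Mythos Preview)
Your construction has a genuine gap when $d-2m\geq 3$. Because your phase splits as $\phi(u,v,\eta'')=u\cdot v+|\eta''|^2$ and your input factors as $f(u,v,\eta'')=g(u,v)\,h(\eta'')$ with $h=\mathbf{1}_{|\eta''|\le R^{-1/2}}$, the extension factors as $Ef=\widetilde{E}g\cdot H$, where $H$ is the standard Knapp wave packet for the paraboloid in $\R^{d-2m}$. A direct calculation (in either your near or far regime) shows that the hyperbolic factor contributes only ratio $\approx 1$ to $\|Ef\|_{L^2(X)}/\|f\|_{L^2}$: the $|x_d|^{-2m}$ decay of $|\widetilde{E}g|^2$ in the far regime is exactly offset by the growth of the admissible $(a,b)$-region, so that stationary-phase regime buys nothing beyond what the near-origin Knapp box already gives. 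Consequently the exponent you can extract is governed entirely by the elliptic Knapp example, which for a $\sigma$-dimensional weight on the Knapp tube in $\R^n$ (here $n=d-2m$, $\sigma=\alpha-m$) yields at best $s_n(\sigma)\geq(\sigma-n+2)/4$. This matches the target $\sigma/(2n)$ only at $\sigma=n$ (i.e.\ $\alpha=d-m$) or when $n=2$; for $n=d-2m\geq 3$ and $\sigma\in[n-1,n)$ it is strictly weaker, so your scheme cannot reach $s_d(\alpha)\geq(\alpha-m)/(2(d-2m))$ on the full interval.

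The paper's proof proceeds differently: it uses the same tensor splitting but takes the hyperbolic input $g=\mathbf{1}_{([0,R^{-1}]\times[0,1])^m}$ together with an $m$-dimensional measure on the long directions, reducing the problem verbatim to the parabolic lower bound in $\R^{d-2m}$. It then invokes Du's theorem, which supplies (via a Bourgain-type construction, not a Knapp example) an $h$ and a $\sigma$-dimensional measure achieving $\|e^{it\Delta}\check{h}\|_{L^2(\nu_e)}/\|h\|_2\gtrsim R^{\sigma/(2n)}$. That nontrivial input is exactly what your explicit construction is missing. Your plan would succeed only at the endpoint $\alpha=d-m$, or in the special case $d-2m=2$; for the general statement you must either import Du's result for the elliptic factor or reproduce an equivalent number-theoretic construction.
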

Our proof uses a tensor product argument that takes advantage of some recent lower bounds in the elliptic case due to Du \cite{D}. We note however that if $\alpha \geq d-m$ the argument is simpler and one can take $h$ below to be a Knapp example.

 \begin{proof} After a change of variables we may assume that $$Ef(x,t) = \int_{\R^{d-1}}f(\xi) e^{2\pi i (x\cdot \xi + t\phi(\xi) ) } d\xi $$ where $$\phi(\xi) = \pm\xi_1 \xi_2 \pm \dotsb \pm \xi_{2m - 1}\xi_{2m} + \xi^{2}_{2m + 1} + \dotsb + \xi_{d-1}^{2}.$$ We let $$\Lambda = ([0, R^{-1}]\times [0,1])^{m}$$ and set $$g(\xi_1, \xi_2, \dotsc, \xi_{2m}) = \mathbbm{1}_{\Lambda}(\xi_1, \xi_2, \dotsc, \xi_{2m}).$$ We choose our input function so that $$f(\xi) = g(\xi_1,\dotsc,\xi_{2m})h(\xi_{2m +1}, \dotsc, \xi_{d - 1}).$$ If we let $\widetilde{x} = (x_1, \dotsc, x_{2m})$ and $x' = (x_{2m+1},\dotsc,x_{d-1})$ then we have $$Ef(x,t) = \widetilde{E}g(\tilde{x}, t)e^{it\Delta_{\R^{d-2m-1}}}\check{h}(x') ,$$ where $$\widetilde{E}g(\widetilde{x}, t) = \int_{\R^{2m}} g(\eta) e^{2\pi i (\widetilde{x} \cdot \eta + t( \eta_1 \eta_2 \pm \dotsb \pm \eta_{2m-1}\eta_{2m} ) )} d\eta.$$

Let $\widetilde{S}_R = ([0,cR]\times [0,c])^{m}$ for some small but uniform $c > 0$. For our choice of $g$ one easily checks that if $c$ is small enough (independent of $R$) then \begin{equation}\label{eq:lowerBound1} |\widetilde{E}g(\widetilde{x}, t)| \gtrsim R^{-m}, \ \ \ \ \ \ |t|\leq cR, \  \widetilde{x} \in \widetilde{S}_R.\end{equation} We choose our measure to be $\mu_{R} = \nu_{h} \times \nu_e$, where $\nu_{h}$ is a $\lambda$-dimensional measure on $\R^{2m}$ and $\nu_{e}$ is a $\sigma$-dimensional measure on $\R^{d-2m}$, with $\alpha = \lambda + \sigma$. Then $\mu_{R}$ is $\alpha$-dimensional. In fact it suffices to set $\lambda = m$ and let $\nu_{h}$ be a dilate of $m$-dimensional Lebesgue measure. In particular let $\widetilde{S} \subset \R^{2m}$ be the subspace spanned by $x_1, x_3,\dotsc,x_{2m-1}$. We define $\nu_{h}$ such that $\nu_{h}(Q) = 1$ for each lattice unit cube $Q$ in $\R^{2m}$ with $Q \cap \widetilde{S} \neq \emptyset,$ and $\nu_{h}(Q) = 0$ for all other lattice unit cubes. Note that we then have $\nu_{h}(\widetilde{S}_R) \sim R^{m}.$ 

By \eqref{eq:lowerBound1} we have $$\frac{ \|Ef\|_{L^{2}(\mu_R)} }{\|f\|_{L^2}} \gtrsim R^{-\frac{m}{2}} \nu_{h}(\widetilde{S}_R)^{\frac{1}{2}}\frac{ \|e^{it\Delta}\check{h}\|_{L^{2}(\nu_{e})} }{\|h\|_{L^2}} \gtrsim \frac{ \|e^{it\Delta}\check{h}\|_{L^{2}(\nu_{e})} }{\|h\|_{L^2}}.$$ We now appeal to the following lower bounds for the parabolic case due to Du:

\begin{theorem}[\cite{D}] There exists a function $h$ supported in $B^{n-1}(0,2)$ and a $\sigma$-dimensional measure $\nu$ on $\R^{n}$ supported in $B^n(0,R)$ such that $$\frac{\|e^{it\Delta_{\R^n}}\check{h}\|_{L^{2}(\nu)} }{\|h\|_{L^2}} \gtrsim R^{\frac{1 - 2\kappa(\sigma, n) }{2}},$$ where $$\kappa(\sigma, n) = \frac{n-\sigma}{2n}, \ \ \ \ \ \sigma \in [n-1, n].$$
\end{theorem}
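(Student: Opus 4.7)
The theorem is a sharpness (lower bound) statement for a weighted extension estimate on the paraboloid, so the natural approach is to construct an explicit pair $(h, \nu)$ saturating the rate $R^{(1 - 2\kappa)/2} = R^{\sigma/(2n)}$. My plan is a Knapp-plus-lattice counterexample in the style of Bourgain and Du: a coherent superposition of frequency-localised wave packets whose physical-space tubes are arranged to interfere constructively on a carefully chosen rational lattice, against which I test with a $\sigma$-dimensional measure.

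First I would take $h$ to be a superposition of Knapp packets at a common frequency scale $\rho$, namely
\[ h(\xi) = \sum_{j \in J} e^{-i v_j \cdot \xi}\,\chi\bigl((\xi - \xi_j)/\rho\bigr), \]
where $\chi$ is a fixed Schwartz bump, the frequency centres $\{\xi_j\}$ lie on a $\rho$-separated rational grid in $B^{n-1}(0,1)$ (so the supports in $\xi$ are essentially disjoint), and the physical shifts $v_j$ are chosen to align the individual extensions at a prescribed set of space-time points. By orthogonality in $\xi$, $\|h\|_2^2 \sim |J|\rho^{n-1}$, and each Knapp piece has Schrödinger extension of amplitude $\sim \rho^{n-1}$ along a tube of dimensions $\rho^{-1} \times \cdots \times \rho^{-1} \times \rho^{-2}$ whose long axis is normal to the paraboloid at $\xi_j$.

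Next I would exploit arithmetic periodicity. Choosing $\xi_j \in N^{-1}\Z^{n-1}$ with $N = \rho^{-1}$ and evaluating $u(x,t) = e^{it\Delta}\check{h}(x)$ at space-time lattice points of the form $\bigl(q/N,\, p/(2N^2)\bigr)$, Gauss-sum identities for the free Schrödinger evolution force the $|J|$ wave packets to add coherently on a large subset $\Lambda \subset B^n(0,R)$ of such points. I would then take $\nu$ to be the counting measure on $\Lambda$, smoothed to unit balls, and optimise $\rho$ and the lattice density jointly so that $\nu(B(x,r)) \lesssim r^{\sigma}$ holds while the amplitude of $u$ on $\Lambda$ remains as large as possible. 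The endpoint $\sigma = n$ reduces to a single Knapp packet and the trivial $L^2$ bound $R^{1/2}$, while $\sigma = n-1$ recovers the classical Bourgain construction; combining the two bookends will force the claimed rate $R^{\sigma/(2n)}$ for intermediate $\sigma$.

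The main obstacle is the Gauss-sum step: producing a set $\Lambda$ of exactly the right fractal cardinality on which the $|J|$ wave packets add coherently rather than destructively. For $\sigma$ strictly inside $(n-1, n)$ one has to interpolate between the purely number-theoretic mechanism driving the $\sigma = n-1$ example and the essentially geometric $\sigma = n$ Knapp example, tuning the lattice density so that $\nu$ has dimension exactly $\sigma$ and the quadratic phase alignment on $\Lambda$ is preserved; verifying that the resulting $\Lambda$ is both dense enough and arithmetically resonant is the deepest part of the argument.
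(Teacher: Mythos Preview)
Your plan is a faithful outline of Du's actual construction in \cite{D}, and the mechanism you describe---a coherent superposition of Knapp packets on a rational frequency lattice, Gauss-sum resonance at arithmetically distinguished space-time points, and a $\sigma$-dimensional measure supported on the resonant set---is exactly what produces the exponent $R^{\sigma/(2n)}$ in the range $\sigma \in [n-1,n]$. So as a blueprint for an \emph{ab initio} proof there is nothing wrong with it; the Gauss-sum alignment you flag as the hard part is genuinely the core of the argument, and Du handles it by the number-theoretic computation you sketch.

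The paper, however, does not reprove any of this. Its entire argument is the sentence ``This theorem follows from Theorem~1.2(a) in \cite{D} along with Proposition~\ref{prop:equiv}.'' Du states his result as an upper bound on $\beta(\sigma,P^{n-1})$ for the paraboloid (equivalently, a lower bound on the averaged decay exponent's complement), and the paper simply transports that bound to the weighted-extension formulation via the identity $s_n(\sigma) = (\sigma - \beta)/2$ of Proposition~\ref{prop:equiv}. So the comparison is: you are reconstructing the substance of the cited reference, while the paper treats the statement as a black box and only performs the translation step. Both are legitimate; your route is self-contained but substantially longer, and the paper's route is the appropriate one given that the construction is already in the literature.
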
 \noindent This theorem follows from Theorem 1.2(a) in \cite{D} along with Proposition \ref{prop:equiv}. 

Now recall that $\alpha \in [d-m-1, d-m]$, and therefore $\sigma \in [d-2m - 1, d-2m]$. We let $h$ and $\nu_{e}$ be the function and measure determined by the above theorem with $n = d - 2m$, thus obtaining  $$\frac{ \|Ef\|_{L^{2}(\mu_R)} }{\|f\|_{L^2}} \gtrsim R^{\frac{1 - 2\kappa(\sigma, d-2m) }{2}} = R^{\frac{\sigma}{2n}} = R^{\frac{\alpha -m}{2(d-2m)}}.$$

 \end{proof} 

As a corollary we see that if $\alpha \geq d-m$ then $s_{d}(\alpha) = \frac{1}{2} $ and therefore for such $\alpha$ we have $\beta(\alpha, \mathbb{H}^{d-1}_m) = \alpha - 1$. This follows from the monotonicity in $\alpha$ of the quantity $s_{d}(\alpha)$, along with the easy observation that $s_{d}(d) = \frac{1}{2}$ (which follows from Plancherel's Theorem). The monotonicity of $s_{d}(\alpha)$ can be seen directly from Lemma \ref{lem:latticeCube}, since monotonicity of $\bar{s}_{d}(\alpha)$ is obvious. 

\begin{remark} By considering the other cases covered by Theorem 1.1 in \cite{D} one can obtain further lower bounds for $s_{d}(\alpha)$ in the range $\alpha < d- m -1,$ and hence further upper bounds for $\beta(\alpha, \mathbb{H}^{d-1}_m)$ in this range. The argument is the same as we saw above, with the only change being the admissible value of $\kappa(\sigma, n)$ determined by Theorem 1.1 in \cite{D}. \end{remark}

\section{Lower bounds for \texorpdfstring{$\beta(\alpha, \mathbb{H}^{d-1}_m)$}{beta}: the Du-Zhang Method}
\label{four} We can obtain lower bounds for $\beta(\alpha, \mathbb{H}^{d-1}_m)$ by adapting the broad-narrow analysis of Du and Zhang \cite{DZ}. The set-up and structure of the argument are essentially the same as in \cite{DZ}, although there are a few important differences. We cannot use the stronger decoupling result for paraboloids, so we instead must adapt the argument to the weaker decoupling results that exist for surfaces with principal curvatures of mixed signs as summarized in Section \ref{sec:narrow}. On the other hand, in the `broad' case where one uses multilinear restriction estimates the curvature is less relevant and by following the argument in \cite{DZ} we actually get estimates for the broad term which are better than optimal; hence we can refine the argument by weakening the broadness assumption and consequently leaving more room to gain from narrow decoupling.

We will prove the following analogue of Proposition 3.1 in \cite{DZ}. Recall that $p_{m,k} = \frac{2(k -m + 1)}{k - m - 1}$.

\begin{sloppypar}\begin{prop}\label{prop:mainInductive} Fix any $\epsilon > 0$ and pick $\delta > 0$ with $\delta \ll \epsilon$ (say $\delta = \epsilon^4$). Let $K = R^{\delta}$ and let $\mathcal{Q} = \{Q_j\}_{j=1}^{M}$ be a collection of $K^2$-cubes in $B^d (0,R)$. Let $Y = \bigcup_{j=1}^{M}Q_j$ and $$\gamma = \sup_{\substack{ B^{d}(z,r) \\ r\geq K^2  } } \frac{\#\{Q \in Y : Q\subset B^{d}(z,r)  \}  }{r^{\alpha}}. $$ Fix $k \geq m+1$ and suppose that $\|Ef\|_{L^{p_{m,k}}(Q_j) }$ is dyadically constant as $Q_{j} \in \mathcal{Q}$ vary. 
	
 If $$ s(\alpha) \geq \max\bigg(\frac{(d - m) - \alpha}{2p_{m,k}} + \frac{\alpha - (d-m) +2 }{4}, \ \frac{\alpha}{2(k+1)} \bigg)$$ then there is $C_{\epsilon}$ such that \begin{equation}\label{eq:mainInductiveEst} \|Ef\|_{L^{p_{m,k}} (Y)} \leq C_{\epsilon} R^{s(\alpha) + \epsilon}M^{-(\frac{1}{2}  - \frac{1}{p_{m,k}})}\gamma^{\frac{1}{2}  - \frac{1}{p_{m,k}}}\|f\|_{L^{2}}\end{equation} whenever $f$ is supported in the unit ball. 
\end{prop}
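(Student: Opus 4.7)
\emph{Plan.} I would adapt the broad-narrow induction of Du and Zhang~\cite{DZ} to the exponent $p := p_{m,k}$, using Proposition~\ref{prop:narrowDec} as the replacement for the usual Bourgain-Demeter $\ell^2$-decoupling in the narrow step. First, decompose $f = \sum_{\tau} f_{\tau}$ into $K^{-1}$-caps on $\mathbb{H}_m^{d-1}$. For each $K^2$-cube $Q \subset Y$ call $Q$ narrow if there is a $k$-dimensional subspace $V \subset \mathbb{R}^d$ such that every cap $\tau$ with $\|Ef_{\tau}\|_{L^p(Q)}$ comparable to $\|Ef\|_{L^p(Q)}$ has normal $G(\omega_{\tau}) \in V$; otherwise call $Q$ $(k+1)$-broad. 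Split $Y = Y_b \cup Y_n$ accordingly, and bound $\|Ef\|_{L^p(Y)}^p \lesssim \|Ef\|_{L^p(Y_b)}^p + \|Ef\|_{L^p(Y_n)}^p$ up to an $R^{O(\delta)}$ factor absorbed into $R^{\epsilon}$.

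\emph{Broad term.} On each $Q \subset Y_b$ one can extract $k+1$ caps whose normals are $K^{-1}$-transverse, and apply multilinear Kakeya (à la Guth / Bennett-Carbery-Tao) to the corresponding wave packet tubes. Multilinear Kakeya is insensitive to the signature of the defining matrix $M$, so it applies uniformly to $\mathbb{H}_m^{d-1}$. Combining with the hypothesis that $Y$ has dimensional constant $\gamma$ and $\#\mathcal{Q} = M$, dyadic pigeonholing of $\|Ef_{\tau}\|_{L^p(Q)}$ over $\tau$ and $Q$ should yield
\[ \|Ef\|_{L^p(Y_b)} \lesssim R^{\epsilon/2}\, M^{-(1/2-1/p)}\, \gamma^{1/2-1/p}\, R^{\alpha/(2(k+1))}\, \|f\|_{L^2}, \]
which accounts for the threshold $s(\alpha) \geq \alpha/(2(k+1))$.

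\emph{Narrow term and induction.} For narrow $Q$, Proposition~\ref{prop:narrowDec} gives
\[ \|Ef\|_{L^p(Q)} \lesssim_\eta K^{m(1/2-1/p)+\eta}\bigg(\sum_{\tau}\|Ef_{\tau}\|_{L^p(w_Q)}^2\bigg)^{1/2}. \]
After summing over $Q \subset Y_n$, applying Minkowski, and parabolically rescaling each cap $\tau$, the operator $Ef_{\tau}$ becomes an extension on $\mathbb{H}_m^{d-1}$ at scale $R/K^2$ (parabolic rescaling preserves the signature of $M$, hence the surface). I would then apply the inductive hypothesis at scale $R/K^2$ to the rescaled collection $Y_{\tau}$, whose dimensional constant remains controlled by $\gamma$ up to harmless $K^{O(\delta)}$ factors, and conclude using the $L^2$-orthogonality $\sum_{\tau}\|f_{\tau}\|_{L^2}^2 \lesssim \|f\|_{L^2}^2$. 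Balancing the narrow-decoupling loss $K^{m(1/2-1/p)}$, the rescaling Jacobians, and the inductive gain at the smaller scale yields the threshold $s(\alpha) \geq \frac{(d-m)-\alpha}{2p_{m,k}} + \frac{\alpha-(d-m)+2}{4}$.

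\emph{Main obstacle.} The bottleneck is the narrow step: unlike for the elliptic paraboloid, where Bourgain-Demeter decoupling at the critical exponent is essentially lossless, Proposition~\ref{prop:narrowDec} unavoidably carries the factor $K^{m(1/2-1/p)}$ reflecting the affine $m$-planes contained in $\mathbb{H}_m^{d-1}$. This loss is what prevents one from simply taking $k = d-1$ and forces the delicate optimization in $k \in [m+1,d-1]$ that drives Theorem~\ref{thm:knownBounds}. A subtler bookkeeping point is ensuring that the dimensional constant $\gamma$ is inherited by each rescaled collection $Y_{\tau}$ and that the pigeonholing steps only degrade it by $R^{O(\delta)}$ across the $O(1/\delta)$ iterations needed to close the induction; this is where the hypothesis that $\|Ef\|_{L^p(Q)}$ is dyadically constant over $\mathcal{Q}$ is exploited.
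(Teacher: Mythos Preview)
Your overall strategy matches the paper's: split into $(k+1)$-broad and $k$-narrow cubes, handle the broad part via Bennett--Carbery--Tao $(k+1)$-linear restriction (the paper uses Theorem~\ref{thm:BCT} directly rather than multilinear Kakeya, but this is immaterial) to produce the threshold $\alpha/(2(k+1))$, and handle the narrow part by Proposition~\ref{prop:narrowDec} plus parabolic rescaling and induction on scale. Your identification of the $K^{m(1/2-1/p)}$ loss from narrow decoupling as the essential new feature over the elliptic case is also correct.

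However, your sketch of the narrow step---``sum over $Q$, apply Minkowski, parabolically rescale each cap $\tau$''---does not suffice to recover the factor $M^{-(1/2-1/p)}\gamma^{1/2-1/p}$, and this is not merely bookkeeping. Two things are missing. First, rescaling a cap $\tau$ alone sends $B^d(0,R)$ to an anisotropic box of dimensions roughly $(R/K)^{d-1}\times R/K^2$, not to a ball of radius $R/K^2$, so the inductive hypothesis cannot be invoked directly; one must first localize spatially as well, decomposing into boxes $\Box=\Box_{\tau,D}$ with $D$ a cube of side $R/K$, and then cover each $\Box$ by tubes $S$ that become $K_1^2$-cubes after rescaling. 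Second, and more seriously, Minkowski followed by induction only yields a factor $(\gamma_1/M_1)^{1/2-1/p}$ in the rescaled parameters, and there is no general inequality $\gamma_1/M_1 \lesssim K^{O(1)}\gamma/M$: a single $\Box$ may see arbitrarily few of the $M$ cubes. The paper instead applies H\"older after decoupling to pick up the multiplicity factor $\mu^{1/2-1/p}$ (see~\eqref{eq:dec1}), passes to an $\ell^p$ sum over $\Box$, pigeonholes so that $\|f_\Box\|_2\sim\beta$ is constant over $\Box\in\mathbb{B}$, and then uses the double-counting relations~\eqref{eq:inductiveParameter},
\[ \frac{\mu}{\#\mathbb{B}} \lesssim \frac{(\log R)^c\,M_1\eta}{M}, \qquad \eta \lesssim \frac{\gamma K^{\alpha+1}}{\gamma_1}, \]
to convert $(\mu,\#\mathbb{B},M_1,\gamma_1,\eta)$ back into $(M,\gamma)$. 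This H\"older-plus-multiplicity mechanism is the heart of the Du--Zhang induction and is precisely what propagates $M^{-(1/2-1/p)}$ through the scales; it cannot be replaced by a bare Minkowski step.
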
 \noindent  After pigeonholing and using H\"{o}lder's inequality Proposition \ref{prop:mainInductive} implies that if $X$ is any $\alpha$-dimensional collection of unit cubes in $B^{d}(0,R)$ then \begin{equation}\label{eq:DZest} \|Ef\|_{L^{2}(X)} \lesssim_{\epsilon} R^{s(\alpha) + \epsilon }\|f\|_{L^2}.
\end{equation} 

\noindent Via Proposition \ref{prop:equiv} this will imply the remaining claimed lower bounds for $\beta(\alpha, \mathbb{H}^{d-1}_m)$ in all cases except those in \eqref{bilinear}. This implication is discussed further below, after the proof of the proposition. \end{sloppypar}

\begin{remark} Du and Zhang introduce an extra parameter $\lambda$ to account for the number of unit cubes in $X$ that intersect a given lattice $R^{\frac{1}{2}}$ cube. This allows them to take advantage of certain refined Strichartz estimates from \cite{DGLZ}, although as remarked in \cite{DZ} the parameter $\lambda$ is not needed for the proof of their main estimate Corollary 1.7. Although one can prove certain weaker refined Strichartz estimates for $\mathbb{H}_{m}^{d-1}$ by following arguments from \cite{DGLZ} these do not improve any of our estimates for $s_{d}(\alpha)$ below. For this reason we have chosen to prove the weaker version of Proposition \ref{prop:mainInductive} without the parameter $\lambda$.  
\end{remark}

We fix $\epsilon > 0$ for the rest of the argument. Let $\delta > 0$ be another small parameter with $\delta \ll \epsilon$ and set $$K = R^{\delta}.$$ Let $\mathcal{T}$ be a collection of $K^{-1}$-cubes tiling the support of $f$ and use a partition of unity to decompose $f = \sum_{\tau}f_{\tau}$ with $f_{\tau}$ supported in (a small dilate of) $\tau$. Also let $\mathcal{S}$ be a collection of $K^2$-cubes tiling $B^{d}(0,R)$. Given a $K^2$-cube $Q$ in $\mathcal{S}$ we define its significant set $$\mathcal{S}_{p}(Q) = \left\{ \tau : \|Ef_{\tau} \|_{L^p (Q)} \geq \frac{1}{100 (\# \mathcal{T}) } \|Ef\|_{L^p (Q)} \right\}.$$ Note that  $$ \sum_{\tau \notin S_{p}(Q) } \|Ef_{\tau}\|_{L^{p} (Q)} \leq \frac{1}{100}\|Ef\|_{L^{p} (Q)}, $$ so we may absorb terms involving $\tau$ which are not significant to the left-hand side of our estimates of $\|Ef\|_{L^{p}(Q)}$ below. In particular we can restrict attention to caps in the significant set in the analysis below. 

We say that a  $K^{2}$-cube $Q$ is $k$-\textit{narrow }and write $Q \in \mathcal{N}(k)$ if there is a $k$-dimensional subspace $V$ such that $$\text{Angle}(G(\tau), V) \leq \frac{1}{100d K}$$ for all $\tau \in \mathcal{S}_{p}(Q)$, where $G(\tau)$ is the unit normal to the surface $\mathbb{H}_{m}^{d-1}$ above the center of $\tau$. If a cube $Q$ is not $k$-narrow then we say it is $(k+1)$-\textit{broad} and write $Q \in \mathcal{B}(k+1)$. 

We proceed by induction on $R$, the case $R \sim 1$ being easy. Since we are assuming that $\|Ef\|_{L^p (Q)}$ is dyadically constant as $Q \in \mathcal{Q}$ varies it suffices to consider separately the cases where all cubes are $k$-narrow and where all cubes are $(k+1)$-broad.

\subsection{The narrow case} We begin by assuming all of the cubes are $k$-narrow. The following lemma is the main estimate in this case. To simplify notation we will set $p = p_{m,k}$ for the rest of the section. 

 \begin{lemma}\label{lem:narrowEst} Suppose that all of the cubes in $Y$ are $k$-narrow and that Proposition \ref{prop:mainInductive} is true at scale $R/K^2$. Let $$ q(\alpha,p) = (\alpha + m + 1)\left(\frac{1}{2} - \frac{1}{p}\right) + \frac{d+1}{p} - \frac{d-1}{2} -2s(\alpha).$$ Then for $p = p_{m,k}$ one has 
\[  \|Ef\|_{L^p (Y)} \leq C_{\epsilon} (\log R)^c K^{\epsilon^4 - \epsilon} K^{q(\alpha,p)}R^{s(\alpha) + \epsilon} (\gamma M^{-1})^{\frac{1}{2} - \frac{1}{p} } \|f\|_{L^2}. \] \end{lemma}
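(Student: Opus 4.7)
The plan is to follow the narrow-case scheme of Du-Zhang \cite{DZ}, with Proposition \ref{prop:narrowDec} replacing the paraboloid decoupling. I will induct on $R$: the base case $R \lesssim 1$ is trivial and the inductive step combines $k$-narrow decoupling at scale $K^2$ with the inductive hypothesis (Proposition \ref{prop:mainInductive} at the smaller scale $R_1 := R/K^2$, which we are told to assume). The crucial advantage of $k$-narrow decoupling over standard $\ell^2$ decoupling for a surface with mixed-signature principal curvatures is that at $p = p_{m,k}$ it produces only the loss $K^{m(1/2-1/p)}$; this smaller loss is what ultimately allows us to obtain the form of $q(\alpha,p)$ claimed in the lemma.

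Since every $Q \in \mathcal{Q}$ is $k$-narrow, the significant caps $\tau \in \mathcal{S}_p(Q)$ have normals $G(\tau)$ concentrated in an $O(K^{-1})$-neighbourhood of a $k$-dimensional subspace, so Proposition \ref{prop:narrowDec} applies at exponent $p = p_{m,k}$ and yields
\[
\|Ef\|_{L^p(Q)} \lesssim_\eta K^{m(\frac12 - \frac1p) + \eta}\Bigl(\sum_\tau \|Ef_\tau\|_{L^p(w_Q)}^2\Bigr)^{1/2}.
\]
Raising to the $p$th power, summing over $Q \in \mathcal{Q}$, applying H\"older in $\tau$ (using $\#\{\tau\} \lesssim K^{d-1}$ and the $\ell^2 \hookrightarrow \ell^p$ embedding valid for $p \geq 2$), and then exchanging the order of summation produces
\[
\|Ef\|_{L^p(Y)}^p \lesssim_\eta K^{(m+d-1)(p/2 - 1) + O(\eta)}\sum_\tau \|Ef_\tau\|_{L^p(Y_\tau)}^p,
\]
where $Y_\tau$ is a mild thickening of $Y$ that the weight $w_Q$ absorbs.

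Next, for each $K^{-1}$-cap $\tau$ I apply the parabolic rescaling adapted to $\mathbb{H}^{d-1}_m$: an affine change of variables normalizes $\tau$ to the unit ball and rescales physical space anisotropically (after the usual shear recentering the cap at the origin), converting $Ef_\tau$ into $Eg_\tau$ at the smaller scale $R_1$. Standard computations give the Jacobian factors $\|g_\tau\|_2 = K^{(d-1)/2}\|f_\tau\|_2$ and $\|Ef_\tau\|_{L^p} = K^{-(d-1) + (d+1)/p}\|Eg_\tau\|_{L^p}$. The rescaled set $\tilde Y_\tau \subset B^d(0, R_1)$, after a dyadic pigeonholing in $\tau$ that contributes the $(\log R)^c$ factor, has $\alpha$-dimensional density $\tilde\gamma_\tau$ and cube count $\tilde M_\tau$ whose ratio $\tilde\gamma_\tau/\tilde M_\tau$ is controlled by $\gamma/M$ up to explicit powers of $K$ coming from the anisotropic rescaling. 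Proposition \ref{prop:mainInductive} at scale $R_1$ then yields
\[
\|Eg_\tau\|_{L^p(\tilde Y_\tau)} \leq C_\epsilon R_1^{s(\alpha)+\epsilon}\tilde M_\tau^{-(1/2-1/p)}\tilde\gamma_\tau^{1/2-1/p}\|g_\tau\|_2.
\]

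Assembling, using the orthogonality estimate $\sum_\tau \|f_\tau\|_2^p \leq \|f\|_2^p$ (valid for $p \geq 2$ by $\ell^p \hookrightarrow \ell^2$ and Plancherel), and tracking every factor of $K$ — the decoupling loss $K^{m(\frac12 - \frac1p) + \eta}$, the H\"older loss $K^{(d-1)(\frac12 - \frac1p)}$, the Jacobian factors $K^{-(d-1) + (d+1)/p}$ and $K^{(d-1)/2}$, the inductive gain $R_1^{s(\alpha)+\epsilon} = R^{s(\alpha)+\epsilon}K^{-2s(\alpha) - 2\epsilon}$, and the density/cube-count rescaling — a direct algebraic simplification reduces the $K$-exponent to exactly $q(\alpha, p) = (\alpha + m + 1)(1/2 - 1/p) + (d+1)/p - (d-1)/2 - 2s(\alpha)$, modulo an $O(\eta) - 2\epsilon$ slack that fits inside the stated $K^{\epsilon^4 - \epsilon}$ factor once $\eta$ is chosen (say $\eta = \epsilon^{8}$). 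The main obstacle is entirely bookkeeping: one must verify the precise scaling of the $\alpha$-dimensional density $\gamma$ and cube count $M$ under the anisotropic parabolic rescaling on $\mathbb{H}^{d-1}_m$ (the rescaling is not isotropic because the hyperbolic paraboloid phase scales the vertical variable by $K^2$), and check that all the $K$-exponents collapse neatly to $q(\alpha,p)$. There are no conceptual surprises beyond the replacement of standard decoupling by Proposition \ref{prop:narrowDec}, but the many factors of $\frac12 - \frac1p$ must be tracked carefully.
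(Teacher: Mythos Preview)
Your overall strategy is right---narrow decoupling via Proposition \ref{prop:narrowDec}, parabolic rescaling, and induction at scale $R_1 = R/K^2$---and this matches the paper. But your execution skips the structural core of the Du--Zhang scheme and replaces it with a step that does not deliver the claimed exponent.

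The gap is at the Hölder-plus-rescaling stage. You decouple into caps $\tau$ only, apply Hölder with loss $K^{(d-1)(1/2-1/p)}$ from the total cap count, and then assert that ``$\tilde\gamma_\tau/\tilde M_\tau$ is controlled by $\gamma/M$ up to explicit powers of $K$.'' This last claim is precisely where the $K^{(\alpha+1)(1/2-1/p)}$ factor in $q(\alpha,p)$ must come from, and it is not true in any usable form without additional structure. The paper (following \cite{DZ}) introduces the spatial localisation $D$, forming boxes $\Box = \Box_{\tau,D}$, and covers each $\Box$ by intermediate tubes $S$ of dimensions $KK_1^2 \times \cdots \times KK_1^2 \times K^2 K_1^2$ that become exactly $K_1^2$-cubes under rescaling. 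It then pigeonholes to fix the auxiliary parameters $\eta$ (number of $K^2$-cubes from $Y$ per $S$), $M_1$ (number of $S$ per $\Box$), $\gamma_1$ (rescaled density), the multiplicity $\mu = \#\{\Box : Q \subset Y_\Box\}$, and $\|f_\Box\|_2 \sim \beta$. The two parameter relations
\[
\frac{\mu}{\#\mathbb{B}} \lesssim \frac{M_1 \eta}{M}, \qquad \eta \lesssim \frac{\gamma K^{\alpha+1}}{\gamma_1},
\]
are what convert the Hölder loss $\mu^{1/2-1/p}$ plus the inductive factor $(\gamma_1 M_1^{-1})^{1/2-1/p}$ into $(K^{\alpha+1}\gamma M^{-1})^{1/2-1/p}(\#\mathbb{B})^{1/2-1/p}$; the dyadic constancy $\|f_\Box\|_2 \sim \beta$ then absorbs $(\#\mathbb{B})^{1/2-1/p}$ into $\|f\|_2$. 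Without the $\Box$-decomposition and these pigeonholed parameters, the relations cannot even be formulated, and your crude Hölder loss $K^{(d-1)(1/2-1/p)}$ combined with an unquantified density claim will not produce the needed $(\alpha+1)(1/2-1/p)$. There is also a secondary issue: to invoke Proposition \ref{prop:mainInductive} at scale $R_1$ you need $\|Eg_\tau\|_{L^p(\tilde Q)}$ dyadically constant over the rescaled $K_1^2$-cubes $\tilde Q$, and the tubes $S$ are exactly what set this up---your sketch provides no mechanism for it.
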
 

\noindent Lemma \ref{lem:narrowEst} implies the narrow case of Proposition \ref{prop:mainInductive} as long as
\begin{equation}\label{eq:narrowInduction} s(\alpha) = \frac{(d - m) - \alpha}{2p} + \frac{\alpha - (d-m) +2 }{4}, \end{equation} which is the best choice of $s(\alpha)$ for which the induction closes. 

If we set $p = p_{m, d-1}$, corresponding to the usual narrow case $Q \in \mathcal{N}(d-1)$ , we get $$s(\alpha) = \frac{\alpha}{2(d-m)}.$$ This is in general an improvement from what we can obtain from more elementary arguments. However, if we assume $Q \in \mathcal{N}(k)$ for $k < d-1$ then we can use Proposition \ref{prop:narrowDec} to decouple with larger $p = p_{m,k}$, and thus from \eqref{eq:narrowInduction} we will have better estimates in some cases. We defer this analysis until after we sketch the proof of Lemma \ref{lem:narrowEst} and consider the broad case of Proposition \ref{prop:mainInductive}. 

\subsubsection{Proof sketch of Lemma \ref{lem:narrowEst}} The argument is essentially the same as the proof of the narrow case in \cite{DZ}, with the exception that we use the $k$-narrow decoupling in Proposition \ref{prop:narrowDec} in place of $(d-1)$-narrow decoupling for the paraboloid.  

We break $B^{d-1}(0,R)$ into $R/K$ cubes $D$ and decompose $$Ef = \sum_{(\tau, D)} Ef_{\Box_{\tau, D}},$$ where $\widehat{Ef}_{\Box_{\tau,D}} $ is supported in a small dilate of $\tau$ and $Ef_{\Box_{\tau, D}}$ decays rapidly outside a (small dilate of) an $R/K \times \dotsm \times R/K \times R$ rectangle with long direction $G(\tau)$. Since we are in the narrow case it follows that the $\tau$ are supported in an $O(K^{-1})$ neighborhood of a $(k-1)$-plane $V_0$, and hence we can decouple using Proposition \ref{prop:narrowDec} and then hope to use the induction hypothesis and parabolic rescaling. As in \cite{DZ} we first need to pigeonhole to fix certain parameters before carrying out this argument. 

 Let $R_1 = R/K^2$ and $K_1 = R_{1}^{\delta}$. By induction we can assume Proposition \ref{prop:mainInductive} holds at scale $R_1$. To take advantage of this we cover each $\Box = \Box_{\tau, D}$ by $KK_{1}^2 \times \dotsm \times KK_{1}^2 \times K^2 K_{1}^2$ tubes $S$ with long direction parallel to $G(\tau)$. We may throw away the $S$ which do not intersect cubes from $Y$. After a parabolic rescaling note that the $S$ become $K_{1}^{2}$-cubes. Now fix one box $\Box$. After dyadic pigeonholing we may assume that $\|Ef_{\Box}\|_{L^{p}(S)}$ is dyadically constant as $S$ varies. We may also assume that each $S$ contains $\sim \eta$ narrow $K^{2}$ cubes from $Y$ and that the number of $S$ in $\Box$ is $\sim M_1$.

 After further pigeonholing we may assume that the fixed parameters from the last paragraph are the same as $\Box$ varies. We can also assume that $\|f_{\Box}\|_{L^{2}} \sim \beta$ as $\Box$ varies. Let $\mathbb{S}_{\Box}$ denote the remaining collection of tubes $S$. After pigeonholing one more time we can assume that for each $\Box$ $$ \max_{\substack{ T_r \subset \Box: r \geq K_{1}^2 } } \frac{ \# \{S \in \mathbb{S}_{\Box} : S\subset T_r\}  }{r^{\alpha}} \sim \gamma_1, $$ where $T_r$ are $Kr \times \dotsm \times Kr \times K^2 r$ tubes in $\Box = \Box_{\tau, D}$ running parallel to $G(\tau)$. Finally let $Y_{\Box}$ denote the narrow cubes contained in $\Box$. Sort the $Q \in \mathcal{N}(k)$ into groups according to the value of the multiplicity $$\# \{ \Box : Q \subset Y_{\Box}  \} \sim \mu.$$ We let $\mathbb{B}$ denote the collection of remaining $\Box$'s. It is straightforward to check as in \cite{DZ} that all this pigeonholing contributes an acceptable loss of $(\log R)^c$ to our estimates.
 
 With all these parameters fixed we apply Proposition \ref{prop:narrowDec} and then H\"{o}lder's inequality to obtain \begin{equation} \label{eq:dec1} \|Ef\|_{L^p (Q)} \leq C_{\epsilon}(\log R)^c  K^{m\left(\frac{1}{2} - \frac{1}{p}\right) + \epsilon^{4} }\mu^{\frac{1}{2} - \frac{1}{p} } \big(\sum_{\Box : Q \subset Y_{\Box}} \|Ef_{\Box}\|^{p}_{L^{p} (w_Q) } \big)^{\frac{1}{p}}.
 \end{equation} Moreover, since $\|Ef\|_{L^p (Q)}$ is dyadically constant as $Q$ varies and the number of $Q$ is $\lessapprox R^{d}$ we can assume that \eqref{eq:dec1} holds with the same pigeonholed parameters for each $Q$. Then applying \eqref{eq:dec1} for each such $Q$ and summing, we arrive at the estimate \begin{equation} \label{eq:dec2} \|Ef\|_{L^p (Y)} \leq C_{\epsilon}(\log R)^c  K^{m\left(\frac{1}{2} - \frac{1}{p}\right) + \epsilon^{4} } \mu^{\frac{1}{2} - \frac{1}{p} }\big(\sum_{\Box } \|Ef_{\Box}\|^{p}_{L^{p} (w_{Y_{\Box}}) } \big)^{\frac{1}{p}}.
 \end{equation} We are at the same point as (3.19) in \cite{DZ}, with the only differences being the additional loss of $K^{m\left(\frac{1}{2} - \frac{1}{p}\right)}$ coming from the weaker decoupling for $\mathbb{H}_{m}^{d-1}$, along with the admissible range of $p$. 
 
 We have the following relationships between our parameters. Each of these estimates is proved in exactly the same way as in the parabolic case (see (3.24) and (3.25) in \cite{DZ}).  
 
 \begin{equation}\label{eq:inductiveParameter} \frac{\mu}{\# \mathbb{B}} \lesssim \frac{ (\log R)^{c} M_1 \eta }{M}, \ \ \ \ \ \ \ \ \eta \lesssim \frac{\gamma K^{\alpha +1}}{\gamma_{1}}   \end{equation}

\noindent For each $\Box$ we will estimate the corresponding term in \eqref{eq:dec2} by parabolic rescaling and the induction hypothesis. Since the Fourier transform of $Ef_{\Box}$ is supported in $\tau$, we may find a function $g_{\Box}$ such that $\|g_{\Box}\|_{L^2} = \|f_{\Box}\|_{L^{2}}$ and \begin{equation}\label{eq:rescale} \|Ef_{\Box} \|_{L^{p}(w_{Y_{\Box} }   )} = K^{\frac{d+1}{p} - \frac{d-1}{2} } \|Eg_{\Box}\|_{L^{p}(w_{\widetilde{Y} } ) }, \end{equation} where $\widetilde{Y}$ is the image of $Y_{\Box}$ under the parabolic rescaling. Note that by construction $\widetilde{Y}$ is a union of $K_{1}^2$-cubes which are the images of the pigeonholed $S$ under the rescaling, and these cubes are contained in a ball of radius $R_1$. Moreover, the hypothesis of Proposition \ref{prop:mainInductive} is satisfied for $\widetilde{Y}$ at scale $R_{1}$, with $\# \widetilde{Y} = M_1$ and $\gamma_1$ playing the role of $\gamma(\widetilde{Y})$. Applying \eqref{eq:rescale} and then the induction hypothesis for each $\Box$ to \eqref{eq:dec2} we obtain
\begin{align*}
\|Ef\|_{L^p (Y)} \leq C_{\epsilon}(\log R)^c & K^{m\left(\frac{1}{2} - \frac{1}{p}\right) + \epsilon^{4} } \mu^{\frac{1}{2} - \frac{1}{p} }R^{s(\alpha) + \epsilon} K^{-2s(\alpha) - \epsilon} K^{\frac{d+1}{p} - \frac{d-1}{2}} (\gamma_1 M_{1}^{-1})^{ \frac{1}{2} - \frac{1}{p} } \\ &\cdot  \big(\sum_{\Box }  \|f_{\Box}\|^{p}_{L^{2}}  \big)^{\frac{1}{p}}.
\end{align*}

\noindent Then as a consequence of \eqref{eq:inductiveParameter} we obtain 

\begin{align}\label{eq:inductiveEst} \nonumber
\|E&f\|_{L^p (Y)} \leq C_{\epsilon}(\log R)^c  K^{m\left(\frac{1}{2} - \frac{1}{p}\right) + \epsilon^{4} } R^{s(\alpha) + \epsilon} K^{-2s(\alpha) - \epsilon} K^{\frac{d+1}{p} - \frac{d-1}{2}} (\eta  \gamma_1 M^{-1}\#\mathbb{B} )^{ \frac{1}{2} - \frac{1}{p} } \\ \nonumber& \hspace{50mm}  \cdot \big(\sum_{\Box }  \|f_{\Box}\|^{p}_{L^{2}}  \big)^{\frac{1}{p}}
\\ \nonumber &\leq C_{\epsilon}(\log R)^c  K^{m\left(\frac{1}{2} - \frac{1}{p}\right) + \epsilon^{4} } R^{s(\alpha) + \epsilon} K^{\frac{d+1}{p} - \frac{d-1}{2} -2s(\alpha) - \epsilon} (K^{\alpha +1}\gamma M^{-1})^{\frac{1}{2} - \frac{1}{p}} (\#\mathbb{B} )^{ \frac{1}{2} - \frac{1}{p} }  \\ & \hspace{50mm} \cdot \big(\sum_{\Box }  \|f_{\Box}\|^{p}_{L^{2}}  \big)^{\frac{1}{p}}. 
\end{align}

\noindent Finally, since we are assuming $\|f_{\Box}\|_{L^2} \sim \beta$ for each $\Box \in \mathbb{B}$ it follows that $$(\#\mathbb{B} )^{ \frac{1}{2} - \frac{1}{p} }  \big(\sum_{\Box }  \|f_{\Box}\|^{p}_{L^{2}}  \big)^{\frac{1}{p}} \lesssim \|f\|_{L^{2}}.$$ This completes the sketch of the proof of Lemma \ref{lem:narrowEst}.

\subsection{The broad case} We now suppose that all $Q$ are $(k+1)$-broad. Let $c_Q$ denote the center of each $Q$. By using the uncertainty principle as in the proof of the broad case in \cite{DZ} we arrive at an estimate of the form $$\|Ef\|^{p}_{L^p (Q)} \leq K^{O(1)} \int_{B(c_Q, 2)} \prod_{j=1}^{k+1} |Ef_{j}|^{\frac{p}{k+1}}, \ \ \ \ \ \ Q \in \mathcal{B}(k).$$ Here the $f_j$ are suitable modulations of $f_{\tau_{j}}$'s with $(k+1)$-transverse frequency supports. In particular $\|f_j\|_{L^2} \leq \|f\|_{L^2}$. 

We may pigeonhole to assume that $$\|\prod_{j=1}^{k+1} |Ef_{j}|^{\frac{1}{k+1}} \|_{L^{\infty}(B(c_Q, 2 ))} \sim A$$ for each $Q$. We also fix $q < p$. Then using the above estimates and Bernstein's inequality we obtain \begin{align*} \|Ef\|_{L^p (Y)} &\leq K^{O(1)} \|\prod_{j=1}^{k+1} |Ef|^{\frac{1}{k+1}}\|_{L^{p} (\bigcup_{Q} B(c_Q , 2) )} \\ &\leq K^{O(1)} M^{\frac{1}{p} - \frac{1}{q} } AM^{\frac{1}{q}} \\ &\leq K^{O(1)}  M^{\frac{1}{p} - \frac{1}{q} } \|\prod_{j=1}^{k+1} |Ef_j |^{\frac{1}{k+1}} \|_{L^q (\bigcup_{Q} B(c_Q , 2) ) }.
\end{align*} 

\noindent We now use the $(k+1)$-linear multilinear restriction in $\R^{d}$. Recall the following:

\begin{theorem}[\cite{BCT}] \label{thm:BCT} Suppose $f_{i} \in L^{2}(B^{d-1}(0,2))$ with $f_{i}$ supported in $U_i$ for $i = 1,2,\dotsc,a$ and $a \leq d.$ Also suppose that $U_{i}$ are $a$-transverse, in the sense that $$\inf_{v_i \in U_{i}} |G(v_1) \wedge G(v_2) \wedge \dotsb \wedge G(v_{a})| \gtrsim 1.$$ Then for $q \geq \frac{2a}{a-1}$ and any $\epsilon > 0$ $$\| \prod_{i=1}^{a} |Ef_i|^{\frac{1}{a}} \|_{L^{q}(B^{d}(0,R)) } \leq C_{\epsilon} R^{\epsilon} \prod_{i=1}^{a} \|f_{i}\|^{\frac{1}{a}}_{L^2}.$$
\end{theorem}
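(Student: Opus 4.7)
The plan is to establish the endpoint $q = \frac{2a}{a-1}$; the higher exponents follow by interpolation with the trivial $L^\infty$ bound $\|E f_i\|_\infty \le \|f_i\|_1 \lesssim \|f_i\|_2$. The strategy is induction on scales, fed by the multilinear Kakeya inequality (Guth's theorem) for transverse families of tubes.

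First I would perform a wave packet decomposition of each $f_i$ at the parabolic scale $R^{-1/2}$, writing $f_i = \sum_{\theta, v} f_{i,\theta,v}$ where $\widehat{f_{i,\theta,v}}$ is concentrated on a $R^{-1/2}$-cap $\theta \subset U_i$ and $f_{i,\theta,v}$ is essentially spatially supported on an $R^{1/2}$-ball $v$. Then each $E f_{i,\theta,v}$ is essentially supported on an $R^{1/2}\times\cdots\times R^{1/2}\times R$ tube $T_{i,\theta,v}$ pointing in the normal direction $G(\theta)$, and is locally constant on balls of radius $\lesssim R^{1/2}$. By $L^2$-orthogonality of the wave packets I may write $\|E f_i\|_{L^2(B)}^2 \approx \sum_T c_{i,T} \, |B \cap T|/|T|$ on $R^{1/2}$-scale balls $B$, with weights $c_{i,T} = \|f_{i,\theta,v}\|_{L^2}^2$ summing to $\|f_i\|_{L^2}^2$.

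Next I would implement a two-scale induction. Cover $B_R$ by $R^{1/2}$-balls $B$ and, on each such ball, apply the inductive hypothesis (valid at scale $R^{1/2}$) to control $\|\prod_i |E f_i|^{1/a}\|_{L^q(B)}$ in terms of the local $L^2$ norms of the wave packets meeting $B$. Summing over $B$ converts the left-hand side to an integral of the form
\[ \int_{B_R} \prod_{i=1}^{a}\Bigl(\sum_{T \in \mathbb{T}_i} c_{i,T}\chi_T\Bigr)^{1/(a-1)}, \]
where the $\mathbb{T}_i$ are families of $R^{1/2}$-thick tubes whose direction sets are $a$-transverse by hypothesis on the $U_i$. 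Applying the multilinear Kakeya inequality
\[ \int_{B_R} \prod_{i=1}^{a}\Bigl(\sum_{T \in \mathbb{T}_i} c_{i,T}\chi_T\Bigr)^{1/(a-1)} \lesssim_\epsilon R^{\epsilon}\prod_{i=1}^{a}\Bigl(\sum_{T \in \mathbb{T}_i} c_{i,T}\Bigr)^{1/(a-1)}, \]
and using $\sum_T c_{i,T} \lesssim \|f_i\|_{L^2}^2$, yields the desired estimate at scale $R$ with a single factor of $R^\epsilon$ from the Kakeya input and one more from the induction step.

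The main obstacle is bridging the wave packet/tube gap. Wave packets are oscillatory and can interfere destructively within a single direction, so replacing $|E f_i|^2$ by the weighted tube-indicator sum $\sum_T c_{i,T}\chi_T$ is only valid after averaging on $R^{1/2}$-scale balls. This is precisely what forces the two-scale induction: the fine-scale oscillations inside each $R^{1/2}$-ball must be resolved by the inductive multilinear restriction estimate, while the coarse $R$-scale combinatorics are controlled by multilinear Kakeya. A secondary bookkeeping issue is arranging the recursion so that the per-step loss iterates acceptably; the bound $C(R) \le C'_\epsilon R^{\epsilon/2} \, C(R^{1/2})$ telescopes over $\log\log R$ halvings of the scale to $C(R) \le C_\epsilon R^\epsilon$, which is why the $R^\epsilon$ loss in the statement is both unavoidable and sufficient for the induction to close.
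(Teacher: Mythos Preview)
The paper does not prove this theorem at all: it is quoted from \cite{BCT} as a black-box input and applied immediately in the broad case of Proposition~\ref{prop:mainInductive}. There is therefore nothing in the paper to compare your argument against.

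That said, your sketch is the standard Bennett--Carbery--Tao induction-on-scales argument (in the cleaned-up form that feeds in multilinear Kakeya), and the outline is essentially correct. A couple of minor comments. First, since the statement already carries an $R^{\epsilon}$ loss, you do not need Guth's endpoint multilinear Kakeya; the original BCT version with $R^{\epsilon}$ loss suffices, and invoking Guth here is overkill. Second, the step you flag as ``the main obstacle'' --- passing from $|Ef_i|^2$ on an $R^{1/2}$-ball $B$ to the tube-weighted sum $\sum_T c_{i,T}\chi_T(B)$ --- deserves a slightly more careful statement: what one actually uses is that on each $R^{1/2}$-ball $B$ the inductive hypothesis (after parabolic rescaling to scale $R^{1/2}$) bounds $\|\prod_i |Ef_i|^{1/a}\|_{L^q(B)}$ by $C(R^{1/2})\prod_i \|f_i^{(B)}\|_2^{1/a}$, where $f_i^{(B)}$ is the part of $f_i$ whose wave packets pass through $B$; it is $\|f_i^{(B)}\|_2^2$ that equals $\sum_{T \ni B} c_{i,T}$, and this is what converts the sum over $B$ into the multilinear Kakeya integrand. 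Your recursion $C(R) \le C_{\epsilon}' R^{\epsilon/2} C(R^{1/2})$ and its iteration are correct.
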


\noindent Applying this theorem with $a = k+1$ then yields $$\|Ef\|_{L^p (Y)} \leq C_{\epsilon}R^{\epsilon^2}K^{O(1)}M^{-\left(\frac{1}{2} - \frac{1}{p}\right)} M^{\frac{1}{2} - \frac{1}{q}} \|f\|_{L^2}.$$ We may assume $\gamma \geq K^{-c}$. Then $$M^{\frac{1}{2} - \frac{1}{q}} \leq (\gamma R^{\alpha})^{\frac{1}{2} - \frac{1}{q}} =  \gamma^{\frac{1}{2} - \frac{1}{q}} (R^{\alpha})^{\frac{1}{2(k+1)}} \leq K^{O(1)} \gamma^{\frac{1}{2} - \frac{1}{p}}R^{\frac{\alpha}{2(k+1) }}.$$ If $\delta$ is chosen small enough (depending on $\epsilon$) we therefore obtain \begin{equation}\label{eq:mainBroadEst} \|Ef\|_{L^{p}(Y)} \leq C_{\epsilon} R^{\epsilon} \gamma^{\frac{1}{2} - \frac{1}{p} }M^{-( \frac{1}{2} - \frac{1}{p} )} R^{\frac{\alpha}{2(k+1)}}\|f\|_{L^2}. \end{equation} 

\noindent This completes the proof of Proposition \ref{prop:mainInductive}. 

\subsection{Optimizing between the broad and narrow case}\label{sec:broad} As above we assume $\alpha < d-m$. Note that we get a better estimate for $s_{d}(\alpha)$ in the narrow case when $p = p_{m,k}$ is as large as possible, and hence $k$ is as small as possible (see \eqref{eq:narrowInduction}). However, in the broad case we get a better estimate for $s_{d}(\alpha)$ when $k+1$ is as large as possible, and hence $k$ is as large as possible (see \eqref{eq:mainBroadEst}). The goal now is to optimize between these two cases. From Proposition \ref{prop:mainInductive} we have 
\begin{equation} \label{sbound} s_{d}(\alpha) \leq \max\bigg(\frac{(d - m) - \alpha}{2p_{m,k}} + \frac{\alpha - (d-m) +2 }{4}, \ \frac{\alpha}{2(k+1)} \bigg), \end{equation}
where $$ p_{m,k} = \frac{2(k -m + 1)}{k - m - 1}, \ \ \ \ \  k = m+1, m+2, \dotsc, d-1.$$ Note that $$\frac{(d - m) - \alpha}{2p_{m,k}} + \frac{\alpha - (d-m) +2 }{4} = \frac{k+ 1 - d + \alpha}{2(k-m + 1)},$$ so that \eqref{sbound} becomes 
\begin{equation} \label{sbound2} s_{d}(\alpha) \leq \max\bigg(\frac{k+ 1 - d + \alpha}{2(k-m + 1)}, \ \frac{\alpha}{2(k+1)} \bigg). \end{equation}
Assume first that $m >1$. Write $k+1 = d-j$ and let $j \in [1,m-1]$ be the unique positive integer such that
\[ \alpha \in \left[ \frac{ (j-1)(d-j)}{m-1}, \frac{ j(d-(j+1))}{m-1} \right). \]
This is possible since the intervals on the right hand side form a left-to-right partition of $[0,d-m)$ as $j$ varies from $1$ to $m-1$. This choice of $j$ was found by some tedious algebra which we omit since it is not necessary to the proof. The bound in \eqref{sbound2} becomes 
\begin{align*} s_{d}(\alpha) &\leq \max\bigg(\frac{\alpha-j}{2(d-j-m)}, \ \frac{\alpha}{2(d-j)} \bigg) \\
&= \begin{cases} \frac{\alpha}{2(d-j)}, &\alpha \in \left[ \frac{(j-1)(d-j)}{m-1}, \frac{j(d-j)}{m} \right) \\
\frac{\alpha-j}{2(d-j-m)}, &\alpha \in \left[ \frac{j(d-j)}{m}, \frac{j(d-(j+1))}{m-1} \right), \end{cases} \end{align*} 
and therefore 
\begin{equation}\label{eq:mainBetabound} \beta(\alpha, \mathbb{H}_{m-1}^d ) \geq	\begin{cases} \frac{\alpha(d-j-1)}{d-j}, &\alpha \in \left[ \frac{(j-1)(d-j)}{m-1}, \frac{j(d-j)}{m} \right) \\
\alpha - \frac{\alpha-j}{d-j-m}, &\alpha \in \left[ \frac{j(d-j)}{m}, \frac{j(d-(j+1))}{m-1} \right). \end{cases} 
\end{equation}
This finishes the proof if $m >1$. The simpler case $m=1$ can be handled by making the choice $k= d-2$. 




\subsection{\texorpdfstring{$k$}{k}-broad estimates and failure of transverse equidistribution}

In the argument above we have used the $k$-linear restriction estimates proved in \cite{BCT}. These estimates have nothing to do with the curvature of $\mathbb{H}_{m}^{d-1}$ since they only depend on the transversality of the support of the input functions. Any improvement over the $k$-linear Bennett-Carbery-Tao estimate that takes into account the curvature of $\mathbb{H}_{m}^{d-1}$ will lead to improved lower bounds for $\beta(\alpha, \mathbb{H}^{d-1}_m)$, at least if $k$ is large enough.   

One possible route towards such an improvement would be proving analogues of Guth-type $k$-broad estimates as in \cite{G}. These estimates for the paraboloid are weaker than the conjectured $k$-linear restriction estimates, but still strong enough to yield improved estimates for the extension operator after applying a broad-narrow argument.   

We recall the basic set-up from \cite{G} for $k$-broad norms. On each ball $B_{K^2} \subset B_R$ Guth defines $$\mu_{Ef}(B_{K^2}) := \min_{V_1, \dotsc ,V_A} \big( \max_{\tau \notin V_a} \int_{B_{K^2}} |Ef|^p  \big), $$ where the minimum is over $(k-1)$-dimensional subspaces of $\R^d$ and the maximum is over $\tau$ such that Angle($G(\tau), V_a) \geq \frac{1}{100dK}$ for all $a$ (abbreviated by `$\tau \notin V_a$'). Then the $k$-$broad$ norm (which is not actually a norm) is defined to be $$\|Ef\|^{p}_{\text{BL}^{p}_{k, A}}  := \sum_{B_{K^2} \subset B_R}  \mu_{Ef}(B_{K^2}).$$ For the paraboloid Guth proves that \begin{equation} \label{eq:kBroad} \|Ef\|_{\text{BL}^{p}_{k, A}}  \lesssim_{\epsilon, A} R^{\epsilon}\|f\|_{L^2}\end{equation} for $p$ in exactly the same range as the conjectured sharp $k$-linear restriction bounds, and uses this to deduce new  $L^{p}$ bounds for $Ef$. A key tool used in Guth's argument is a \textit{transverse equidistribution estimate}, which is a certain manifestation of the uncertainty principle when $Ef$ is concentrated on a neighborhood of a lower-dimensional variety. In particular, if $Z$ is an $l$-dimensional variety in $\R^d$ then Guth's transverse equidistribution estimate says that for $\rho < R$ we have \begin{equation}\label{eq:transverseEqui}\int_{N_{\rho^{1 + \delta}} (Z) \cap B_R} |Ef|^{2} \lesssim_{\epsilon} R^{\epsilon} \bigg(\frac{\rho}{R}\bigg)^{d-l} \int_{2B_R} |Ef|^2  \ + \text{RapDec}(R)\|f\|_{L^2}. \end{equation} So the operator cannot concentrate too much on a small neighborhood of $Z$. The curvature of the paraboloid plays an essential role in the proof of \eqref{eq:transverseEqui}, and indeed \eqref{eq:transverseEqui} can fail for hyperboloids for certain $Z$. We explain why below using a slight elaboration on an example found in \cite{GHI} (see example 8.8). This creates an obstacle towards proving $k$-broad estimates for $\mathbb{H}_{m}^{d-1}$, since the estimates \eqref{eq:transverseEqui} play an important role in closing the induction in the proof of \eqref{eq:kBroad} for the paraboloid.

Recently Hickman and Iliopoulou have proved weakened versions of the transverse equidistribution estimates for $\mathbb{H}_{m}^{d-1}$. These imply improved $k$-broad estimates for certain $k$, depending on the signature of the surface (\cite{HiI}). The idea is that even though \eqref{eq:transverseEqui} can fail in general, as long as the dimension of $Z$ is small enough relative to the signature of the surface it is possible to obtained weaked versions of \eqref{eq:transverseEqui} with a smaller power of $\rho/R$. Unfortunately the results in \cite{HiI} do not imply improved $k$-broad estimates in the range we have used in Section \ref{sec:broad} ($k \geq d-m$). In this range the estimates in \cite{HiI} are essentially the same as Theorem \ref{thm:BCT} (for our purposes), and one can check that the $k$-broad estimates in the range $k < d-m$ do not improve the results we have obtained above. Indeed, Theorem 1.5 in \cite{HiI} allows us to replace $\frac{\alpha}{2(k+1)}$ in \eqref{sbound2} by $\frac{\alpha}{d - m + k +1}$ when $m \leq k \leq d-m - 1,$ after a suitable modification of the argument in the broad case. However when $k \leq d-m-1$ we have $\frac{\alpha}{d - m + k +1} \geq \frac{\alpha}{2(d-m)},$ and all the estmates obtained above were at least this good. 

\subsubsection{The example}
We consider the special case $d = 4, m = 1$. After changing coordinates we may assume without loss of generality that $$Ef(x,t) = \int_{\R^3} f(\omega)e^{2\pi i( x\cdot \omega + t(\omega_1 \omega_2 + \omega_{3}^2))} d\omega.$$ As in \cite{G} we will work with a scale $R^{\frac{1}{2}}$ wave packet decomposition 
\[ Ef = \sum_{\theta, \nu} Ef_{\theta, \nu}, \] 
(see Section \ref{preliminaries}).

Below we will identify $\R^3$ with all tuples of the form $(\omega_1, \omega_2, \omega_3, 0).$ If $v = (v_1, v_2, v_3, v_4) \in \R^4$ we also let $\bar{v}$ denote the projection of $v$ onto $\R^3$. Now let $V$ be a three-dimensional subspace of $\R^{4}$ determined by the unit normal vector $$\bar{n} = (n_1, n_2, n_3, 0).$$ We pick $g \in L^{2}(B(0,2))$ such that $Eg$ is essentially concentrated along $V$ in the following sense. Let $\mathcal{V}$ denote the collection of wave packets $T_{\theta, \nu}$ such that$$\text{Angle}(T_{\theta, \nu }, V) < R^{-\frac{1}{2}} \ \ \  \text{and } \ \ T_{\theta, \nu} \subset N_{cR^{\frac{1}{2}}}(V) $$ whenever $T_{\theta, \nu} \in \mathcal{V}.$ Then $$\sum_{ (\theta, \nu) \notin \mathcal{V}}  Eg_{\theta, \nu} = \text{RapDec}(R)\|g\|_{L^2}.$$ 
 
 \noindent Recall that for our operator if $\omega$ is the center of $\theta$ then $T_{\theta, \nu}$ points in the direction $$G_0(\omega) = (-\omega_2, -\omega_1, -2\omega_3, 1).$$ Also note that $g$ must be essentially supported in an $O(R^{-\frac{1}{2}})$ neighborhood of the affine space $$\Omega(V) = \{\omega \in \R^3 : G_0 (\omega) \in V \}.$$ If $$\bar{n}_{\Omega} = (n_2, n_1, 2n_3, 0)$$ then one easily calculates that $$\Omega(V) = \{\omega \in \R^3 : \omega \cdot \bar{n}_{\Omega} = 0 \}.$$  In particular $\Omega(V)$ is a vector space, and we can assume without loss of generality that $|\omega \cdot \bar{n}_{\Omega}| \lesssim R^{-\frac{1}{2}}$ in the support of $g$. 
 
 We now assume that $V$ has been chosen so that $$n_1 n_2 + n_{3}^2 = 0.$$ It follows that $\bar{n}_{\Omega} \in V$ and $\bar{n} \in \Omega(V),$ which we will see is the main obstacle towards proving \eqref{eq:transverseEqui}. Fix a ball $B = B^{4}(0, R^{\frac{1}{2}})$ and a parameter $\rho \in [R^{\frac{1}{2}}, R]$. We wish to prove a lower bound on $$\int_{B \cap N_{c\rho^{\frac{1}{2}}}(V)} |Eg|^2 .$$ Let $$u = \bar{n} \times \bar{n}_{\Omega},$$ so that $u \in \Omega(V)$ and $\{\bar{n}, u,  \bar{n}_{\Omega}  \}$ is an orthonormal basis for $\R^3$. We choose coordinates $\xi$ such that $$\omega = \xi_1 \bar{n} + \xi_2 u + \xi_{3} \bar{n}_{\Omega}.$$ In particular if $A$ is the orthonormal matrix $$A = \begin{bmatrix} \bar{n} & u & \bar{n}_{\Omega} \end{bmatrix}$$ then $\omega = A\xi.$ Let $g_{A}(\xi) = g(A\xi)$ and choose $g$ such that $g_{A}(\xi_1, \xi_{2}, \xi_3 ) = g_{A}(\xi_1, \xi_2)$ for $|\xi_3| \leq R^{-\frac{1}{2}}$ and $g_{A}(\xi_1, \xi_2, \xi_3)  = 0$ for $|\xi_3| > R^{-\frac{1}{2}}.$
 
 Define a linear operator $S$ on $\R^3$ by $$S(v_1, v_2, v_3, 0) = (v_2, v_1, 2v_3, 0).$$ Then for any $v \in \R^3$ one has $$v_1 v_2 + v_3^2 = \frac{1}{2} v \cdot Sv.$$ One can now check that $$\int_{B \cap N_{c\rho^{\frac{1}{2}}}(V)} |Eg(x, t)|^2  = \int_{B \cap N_{c\rho^{\frac{1}{2}}}(A^{T}V)} |\widetilde{E}g_{A} (y, t)|^2, \ \ \ \ y = A^{T}x,$$ where $\widetilde{E}f$ be the operator $$\widetilde{E}f(x,t) = \int_{\R^3} f(\xi) e^{2\pi i (x \cdot \xi + \frac{1}{2}t(\xi_{2}^2 (u \cdot Su) + \xi_1 \xi_3 (\bar{n} \cdot S\bar{n}_{\Omega} + |\bar{n}_{\Omega}|^2) + b)) }d\xi$$ and $b = O(R^{-\frac{1}{2}})$ on the support of $g$ and is independent of $\xi_1$.  
 
 Note that $\vec{e}_1$ is normal to $A^{T}V$. We now let $\phi(\xi_1)$ be a bump function with bounded support and choose $g$ such that $g_{A}(\xi_1, \xi_2, \xi_3) = \phi(\rho^{\frac{1}{2} - \delta} \xi_1 )\phi(\xi_2)$ when $|\xi_3| \lesssim R^{-\frac{1}{2}}$. Since $$\xi_3 (\bar{n} \cdot S\bar{n}_{\Omega} + |\bar{n}_{\Omega}|^2) = O(R^{-\frac{1}{2}})$$ a standard stationary phase argument shows that $\widetilde{E}g_A$ rapidly decays if $|y_1| \geq \rho^{\frac{1}{2}}.$ Since $\vec{e}_1$ is normal to $A^{T}V$ it follows that \begin{align*}\int_{B \cap N_{c\rho^{\frac{1}{2}}}(A^{T}V)} |\widetilde{E}g_{A} (y, t)|^2 &= \int_{B \cap N_{cR^{\frac{1}{2}}}(A^{T}V)} |\widetilde{E}g_{A} (y, t)|^2 + \text{RapDec}(R)\|g\|_{L^2} \\ &=\int_{B \cap N_{cR^{\frac{1}{2}}}(V)} |Eg (x, t)|^2 + \text{RapDec}(R)\|g\|_{L^2} ,\end{align*} and therefore  $$\int_{B \cap N_{c\rho^{\frac{1}{2}}}(V)} |Eg|^2 = \int_{B \cap N_{cR^{\frac{1}{2}}}(V)} |Eg|^2 + \text{RapDec}(R)\|g\|_{L^2}$$ despite the fact that $Eg$ is essentially tangent to $V$. It follows that there can be no transverse equidistribution estimate on any neighborhood of $V$. 
 
 %

 In the next section we give a different lower bound using a refined bilinear argument that takes the weight $\mu$ into account. In the example case $d=4$ and $m=1$ this argument avoids trilinear and 3-broad estimates for $\mathbb{H}_{1}^{3}$, and the bound obtained is sharp when $\alpha \in (2,3) = (d/2, d-m)$.

\section{A bilinear argument in the case \texorpdfstring{$m < \frac{d-1}{2}$}{m< (d-1)/2}}
\label{five}
We now give different lower bounds for $\beta(\alpha, \mathbb{H}^{d-1}_m)$ using a bilinear method similar to the approach to weighted restriction estimates for the paraboloid in \cite{E2}. New ideas are needed, however, to deal with the more complicated transversality assumption one needs to assume to have good bilinear estimates for $\mathbb{H}_{m}^{d-1}$. We need the following two preliminary theorems, the first of which is due to Lee in dimension $d \geq 3$ and Vargas independently in dimension 3 (see \cite{L} and \cite{V}).

\begin{theorem} \label{wolffdecomp} Fix $R \geq 1$ and $\delta >0$. Let $\{\Box\}$ be a finitely overlapping cover of $[-1,1]^d$ by cubes $\Box$ of side length $R^{-\delta}$. Suppose that $F$ and $G$ are such that $\widehat{F}$ and $\widehat{G}$ each have Fourier transform supported in $\mathcal{N}_1(R\mathbb{H}_{m}^{d-1})$, and suppose the supports of $\widehat{F}$ and $\widehat{G}$ are such that if
	\[ \tau_1 := \frac{1}{R} \proj_{\mathbb{R}^{d-1}}\supp \widehat{F}, \quad \tau_2 := \frac{1}{R} \proj_{\mathbb{R}^{d-1}}\supp\widehat{G}, \]
	then 
	\[ \inf_{\substack{ \xi, \overline{\xi} \in \tau_1 \\ 
			\eta, \overline{\eta} \in \tau_2}} \left\lvert \langle (\xi- \eta), M(\overline{\xi} - \overline{\eta} ) \rangle \right\rvert \gtrsim 1. \] Then for each $\Box$, $F$ and $G$ can be decomposed as 
	\[ F = F_{\Box} + F_{\Box^c}, \quad G = G_{\Box} + G_{\Box^c}, \] 
	with $F_{\Box}, F_{\Box^c}, G_{\Box}, G_{\Box^c}$ all supported in $\mathcal{N}_{C}(R\mathbb{H}_{m}^{d-1})$ for some absolute constant $C$, such that for $\epsilon \ll \delta$, 
	\[ \sum_{\Box} \left\lVert F_{\Box} \right\rVert_2^2 \lesssim_{\epsilon} R^{\epsilon} \|F\|_2^2, \quad \sum_{\Box} \left\lVert G_{\Box} \right\rVert_2^2 \lesssim_{\epsilon} R^{\epsilon} \|G\|_2^2, \]
	and 
	\[ \left\lVert \widehat{F_{\Box}} \widehat{G_{\Box^c} } \right\rVert_{L^2(\Box)}, \quad  \left\lVert \widehat{F_{\Box^c}} \widehat{G_{\Box} } \right\rVert_{L^2(\Box)}, \quad  \left\lVert \widehat{F_{\Box^c}} \widehat{G_{\Box^c} } \right\rVert_{L^2(\Box)} \lesssim_{\epsilon,\delta} R^{\frac{d-2}{4}+c\delta}\lVert F\rVert_2 \lVert G \rVert_2, \]
	where the constant $c$ is independent of $\delta$ and $\epsilon$. 
\end{theorem}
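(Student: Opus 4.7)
The plan is to adapt the Wolff--Lee--Vargas bilinear decomposition scheme (in the form developed in \cite{L,V} and refined in \cite{Ba}), which extracts an almost-orthogonal decomposition of $F,G$ from a base bilinear restriction inequality. The underlying engine is the basic $L^2$ bilinear estimate for the hyperboloid: under a transversality hypothesis like $|\langle \xi - \eta, M(\bar\xi - \bar\eta)\rangle|\gtrsim 1$ on $\tau_1,\tau_2$, Plancherel plus the fact that the bilinear Jacobian for the addition map on $\mathbb{H}_m^{d-1}$ is controlled by exactly this quantity yields the global $L^2(\R^d)$ bilinear bound of order $R^{(d-2)/4+\epsilon}\|F\|_2\|G\|_2$ (this is essentially the content of \cite{L,V}). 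The goal of the theorem is to upgrade this global estimate to a $\Box$-localized version, at the cost of a small almost-orthogonal splitting of $F$ and $G$.

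First I would set up a Whitney decomposition of $\tau_1 \times \tau_2$: for every dyadic scale $2^{-j}$ (ranging down to roughly $R^{-\delta}$), partition $\tau_1$ and $\tau_2$ into subcubes and single out Whitney pairs $(\tau^\alpha,\tau^\beta)$ with $|\tau^\alpha|\sim|\tau^\beta|\sim\mathrm{dist}(\tau^\alpha,\tau^\beta)\sim 2^{-j}$. A smooth partition of unity then gives $F=\sum_\alpha F^\alpha,\; G=\sum_\beta G^\beta$ with $\supp \widehat{F^\alpha}, \supp \widehat{G^\beta}$ lying in $\mathcal{N}_{C}(R\mathbb{H}_m^{d-1})$ and their $\R^{d-1}$-projections in the Whitney subcubes. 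The global transversality hypothesis passes to each Whitney pair with a uniform constant, so the base bilinear estimate applies to each pair with constant independent of $j$. For each physical cube $\Box$ of side $R^{-\delta}$, I would then define $F_\Box$ to collect those Whitney pieces $F^\alpha$ whose partner region $\tau^\beta$ lies within the $R^\delta$-enlargement (the dual scale of $\Box$) of $\tau^\alpha$, i.e.\ the pieces capable of contributing non-transverse interaction on $\Box$, and set $F_{\Box^c}:=F-F_\Box$; symmetrically for $G$. A key combinatorial point is that each Whitney piece $F^\alpha$ appears in $F_\Box$ for only $O(R^{C\delta})$ cubes $\Box$. Combined with the almost-orthogonality of the Whitney pieces under Plancherel, this yields
\[ \sum_\Box \|F_\Box\|_2^2 \lesssim R^{C\delta}\|F\|_2^2 \lesssim_\epsilon R^\epsilon \|F\|_2^2, \]
once $\epsilon\gg\delta$, and similarly for $G$.

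The three mixed bilinear bounds then reduce to the observation that in each case at least one of the two factors consists only of Whitney pieces well-separated from their partners relative to the dual scale of $\Box$. For such a well-separated pair $(\tau^\alpha,\tau^\beta)$, the convolutions $F^\alpha\ast G^\beta$ have Fourier supports that are disjoint up to $R^{c\delta}$-overlap on $\Box$, so local $L^2$-orthogonality on $\Box$ plus the pairwise base bilinear estimate gives the claimed $R^{(d-2)/4+c\delta}\|F\|_2\|G\|_2$ bound, with $R^{c\delta}$ absorbing the overlap and dyadic summation losses. The main technical obstacle is coordinating the Whitney scales with the physical scale $R^{-\delta}$ so that the orthogonality on $\Box$ is genuine; this is where the signature $m$ enters, since for hyperboloids the addition map $\mathbb{H}_m^{d-1}+\mathbb{H}_m^{d-1}$ can degenerate along certain directions when transversality is lost, and the hypothesis on $|\langle \xi-\eta,M(\bar\xi-\bar\eta)\rangle|$ is precisely what keeps the bilinear Jacobian of controlled size and thereby makes the orthogonality quantitative.
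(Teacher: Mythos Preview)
The paper does not give its own proof of this theorem; it simply cites Section~2 of Lee~\cite{L}. Lee's argument (following Wolff and Tao) is wave-packet based: one decomposes $F$ and $G$ into wave packets at an intermediate scale, and for each physical cube $\Box$ one takes $F_{\Box}$ to be the sum of those wave packets whose tubes pass through a fixed dilate of $\Box$, with $F_{\Box^c}$ the remainder. The almost-orthogonality $\sum_{\Box}\|F_{\Box}\|_2^2 \lesssim R^{\epsilon}\|F\|_2^2$ then comes from the fact that each tube meets only $R^{O(\delta)}$ of the cubes, and the mixed bilinear bounds come from a bilinear Kakeya-type input combined with the base $L^2$ bilinear estimate applied to the packets that do not meet $\Box$.

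Your proposal tries instead to build $F_{\Box}$ out of a Whitney decomposition of $\tau_1\times\tau_2$ in \emph{frequency} space, and this is where it breaks. Your rule---collect those Whitney pieces $F^{\alpha}$ whose partner $\tau^{\beta}$ lies within the ``$R^{\delta}$-enlargement (the dual scale of $\Box$)'' of $\tau^{\alpha}$---depends only on the common scale $R^{-\delta}$ and not on which physical cube $\Box$ is under consideration. With that definition $F_{\Box}$ is literally the same object for every $\Box$, so $\sum_{\Box}\|F_{\Box}\|_2^2$ picks up the full count $R^{d\delta}$ of cubes, and your subsequent claim that ``each Whitney piece $F^{\alpha}$ appears in $F_{\Box}$ for only $O(R^{C\delta})$ cubes $\Box$'' contradicts your own construction. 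The missing ingredient is a mechanism that ties frequency pieces to physical location; that is exactly what the wave-packet decomposition supplies and what a pure Whitney splitting in frequency cannot. The Whitney machinery you describe is the standard device for passing from bilinear to linear restriction estimates (as in Tao--Vargas--Vega), not for the physical-space localization that Theorem~\ref{wolffdecomp} asserts. (A smaller point: you write ``once $\epsilon\gg\delta$'', but the theorem is stated under $\epsilon\ll\delta$.)
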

\noindent  This is proved in Section 2 of the paper \cite{L} by Lee.  

\begin{theorem} \label{bilinearestimate} Fix $R \geq 1$. Suppose that $F$ and $G$ are supported in $\mathcal{N}_1(R\mathbb{H}_{m}^{d-1})$, and suppose the supports of $F$ and $G$ are are such that if
	\[ \tau_1 := \frac{1}{R} \proj_{\mathbb{R}^{d-1}}\supp \widehat{F}, \quad \tau_2 := \frac{1}{R} \proj_{\mathbb{R}^{d-1}}\supp \widehat{G}, \]
	then 
	\[ \inf_{\substack{ \xi, \overline{\xi} \in \tau_1 \\ 
			\eta, \overline{\eta} \in \tau_2}} \left\lvert \langle (\xi- \eta), M(\overline{\xi} - \overline{\eta} ) \rangle \right\rvert \gtrsim 1. \]  Then for any measure $\mu$ supported on the unit ball and any $q \in [2,4]$,
	\[ \left( \int \left\lvert \widehat{F} \widehat{G} \right\rvert^{q/2} \, d\mu \right)^{2/q} \lesssim_a R^{2a} c_{\alpha}(\mu)^{2/q} \lVert F\rVert_2 \lVert G\rVert_2, \]
	for any 
	\[ a > \max\left\{ \frac{d-1}{2}- \frac{\alpha}{q}, \frac{3d}{8} - \frac{(\alpha+1)}{4} \right\}. \]
\end{theorem}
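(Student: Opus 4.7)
The plan is to prove the estimate at two endpoints, $q = 2$ and $q = 4$, and interpolate via H\"older's inequality. The $q = 2$ case follows from Cauchy--Schwarz and a standard fractal $L^2$ restriction estimate of Frostman type, which yields $\int |\widehat{F}|^2\,d\mu \lesssim_\epsilon c_\alpha(\mu)R^{d-1-\alpha+\epsilon}\|F\|_2^2$ in the relevant range of $\alpha$ and hence gives the target with $a > \frac{d-1}{2} - \frac{\alpha}{2}$.

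For $q = 4$, the main work, I would cover the unit ball by finitely overlapping cubes $\Box$ of side length $R^{-\delta}$ and for each $\Box$ apply Theorem~\ref{wolffdecomp} to decompose $F = F_\Box + F_{\Box^c}$ and $G = G_\Box + G_{\Box^c}$. Expanding
\[
\widehat{F}\widehat{G} = \widehat{F_\Box}\widehat{G_\Box} + \widehat{F_\Box}\widehat{G_{\Box^c}} + \widehat{F_{\Box^c}}\widehat{G_\Box} + \widehat{F_{\Box^c}}\widehat{G_{\Box^c}} \quad\text{on }\Box,
\]
each product $h$ above has Fourier support in a ball of radius $O(R)$, so the Plancherel--P\'olya (``locally constant at scale $R^{-1}$'') inequality combined with $\mu(B(x,R^{-1}))\leq c_\alpha(\mu)R^{-\alpha}$ yields the Lebesgue-to-$\mu$ conversion
\[
\int_\Box |h|^2\,d\mu \lesssim c_\alpha(\mu)R^{d-\alpha}\|h\|_{L^2(\Box^*)}^2
\]
for a slight dilate $\Box^*$ of $\Box$. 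For the three cross products Theorem~\ref{wolffdecomp} provides the $L^2(\Box)$ bound of order $R^{(d-2)/2+C\delta}\|F\|_2^2\|G\|_2^2$ per cube; summing over the $\sim R^{\delta d}$ cubes and choosing $\delta$ small relative to $\epsilon$ absorbs the loss and produces a contribution of $c_\alpha(\mu)R^{3d/2-\alpha-1+\epsilon}\|F\|_2^2\|G\|_2^2$, matching the target $a > \frac{3d}{8} - \frac{\alpha+1}{4}$ at $q = 4$.

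The main obstacle is the diagonal product $\widehat{F_\Box}\widehat{G_\Box}$, which is not controlled by Theorem~\ref{wolffdecomp}: the naive unweighted bilinear $L^2$ estimate combined with the almost orthogonality $\sum_\Box \|F_\Box\|_2^2 \lesssim_\epsilon R^\epsilon\|F\|_2^2$ overshoots the target by a factor of $R^{(d-2)/2}$. Resolving this is the content of the observations from~\cite{Ba} flagged in the introduction: the wave-packet structure underlying Theorem~\ref{wolffdecomp} (in which $F_\Box$ collects the wave packets of $F$ whose spatial tubes pass through $\Box$) gives the Fourier supports of $F_\Box, G_\Box$ additional concentration on lower-dimensional subsets of the transverse caps $\tau_1, \tau_2$, and a Lee-type bilinear $L^2$ bound on these concentrated supports yields the improved diagonal estimate that matches the cross terms. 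Once all four contributions are collected at $q = 4$, H\"older's inequality interpolates the $q = 2$ and $q = 4$ bounds to give the general $q\in[2,4]$ case.
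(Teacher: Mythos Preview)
Your treatment of the off-diagonal terms is essentially right and matches the paper. The genuine gap is the diagonal term $\widehat{F_\Box}\widehat{G_\Box}$, and the mechanism you propose for it is incorrect. The functions $F_\Box$, $G_\Box$ in Theorem~\ref{wolffdecomp} are built from the wave packets of $F$, $G$ whose spatial tubes pass through $\Box$, but this places no constraint on which \emph{directions} occur: the Fourier support of $F_\Box$ can still be all of $\tau_1$, not a lower-dimensional slice, so no ``Lee-type bilinear $L^2$ bound on concentrated supports'' is available to recover the missing $R^{(d-2)/2}$. The reference to \cite{Ba} in the introduction concerns the transversality extraction in Section~\ref{five} (passing from $(m+2)$-broadness to a pair of caps satisfying the hypothesis of Theorem~\ref{bilinearestimate}); it has nothing to do with the proof of Theorem~\ref{bilinearestimate} itself.

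The paper instead handles the diagonal term by \emph{induction on the scale $R$}, in the manner of Tao and Wolff. One rescales $\Box$ (side $R^{-\delta}$) to the unit ball; then $F_\Box$, $G_\Box$ become functions supported in $\mathcal{N}_{\rho^{-1}}(R\rho^{-1}\mathbb{H}_m^{d-1})$ with $\rho = R^\delta$, the measure becomes $\mu_\rho$ with $c_\alpha(\mu_\rho)\lesssim \rho^{-\alpha}c_\alpha(\mu)$, a localisation by a bump at unit scale loses a factor $\rho^{-1/2}$ in $L^2$ (from the thickness of the neighbourhood), and one applies the inductive hypothesis at scale $R/\rho$. The resulting factor is $\rho^{-2a + d - 1 - 2\alpha/q}$, which is $\ll 1$ precisely when $a > \frac{d-1}{2} - \frac{\alpha}{q}$; this is where the first term of the $\max$ comes from, while the second term $\frac{3d}{8} - \frac{\alpha+1}{4}$ comes from the off-diagonal pieces exactly as you computed. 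So the paper proves the estimate directly for every $q\in[2,4]$ by induction on $R$, rather than interpolating between endpoints. Note in particular that your claimed $q=4$ endpoint with $a = \frac{3d}{8} - \frac{\alpha+1}{4}$ is stronger than what the theorem asserts there (for $d\geq 3$ the first term of the $\max$ dominates at $q=4$), which is another sign that the diagonal contribution has not been accounted for.
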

\begin{proof} The argument is similar to the proof of the weighted bilinear estimates for the paraboloid and cone in \cite{E2} and \cite{ChHaLe1}. By induction it may be assumed that the result holds at scales smaller than $R/2$. Fix a small $\delta >0$ and break the unit ball into cubes $\Box$ of side length $R^{-\delta}$. Since $q \in [2,4]$,
	\begin{align} \notag  \left\lVert  \widehat{F} \widehat{G} \right\rVert_{L^{q/2}(\mu) }  &\leq \sum_{\Box} \left\lVert  \widehat{F} \widehat{G} \right\rVert_{L^{q/2}(\mu, \Box/2)} \\
	\notag &\lesssim \sum_{\Box} \left\lVert  \widehat{F_{\Box}} \widehat{G_{\Box}} \right\rVert_{L^{q/2}(\mu, \Box)}+ \left\lVert  \widehat{F_{\Box}} \widehat{G_{\Box_c}} \right\rVert_{L^{q/2}(\mu, \Box/2)} \\
	\label{fourparts} &\quad +\left\lVert  \widehat{F_{\Box_c}} \widehat{G_{\Box}} \right\rVert_{L^{q/2}(\mu, \Box/2)}+ \left\lVert  \widehat{F_{\Box_c}} \widehat{G_{\Box_c}} \right\rVert_{L^{q/2}(\mu, \Box/2)}.  \end{align} 
	Let $\rho = R^{\delta}$. For each $\Box$, let $\mu_{\rho} = \rho_{\#}(\mu \chi_{\Box} )$, which is supported in a ball of radius $\sim 1$. The first term is
	\begin{align} \notag  \left\lVert  \widehat{F_{\Box}} \widehat{G_{\Box}} \right\rVert_{L^{q/2}(\mu, \Box)} &= \left( \int \left\lvert \widehat{F}(x/\rho) \widehat{G}(x/\rho) \right\rvert^{q/2} \, d\mu_{\rho}(x) \right)^{2/q} \\
	\label{localise} &= \left( \int \left\lvert \widehat{F_{\rho}}(x) \widehat{G_{\rho}}(x) \right\rvert^{q/2} \, d\mu_{\rho}(x) \right)^{2/q}, \end{align}
	where $F_{\rho}$ is defined by $\widehat{F_{\rho}}(x) = \widehat{F_{\Box}}(x/ \rho)$. Then $F_{\rho}$ and $G_{\rho}$ are supported in $\mathcal{N}_{\rho^{-1}}(R \rho^{-1}\mathbb{H})$.  Let $\phi$ be a Schwartz function such that $\lvert \phi \rvert \sim 1$ in a ball of radius $\sim 1$ containing the support of $\mu_{\rho}$, and such that $\widecheck{\phi}$ is compactly supported in a ball around the origin. Then applying the induction hypothesis at scale $R/\rho$ yields  
	\begin{align} \notag \eqref{localise} &\lesssim \left( \int \left\lvert \widehat{\widecheck{\phi} \ast F_{\rho}}(x) \widehat{\widecheck{\phi} \ast G_{\rho}}(x) \right\rvert^{q/2} \, d\mu_{\rho}(x) \right)^{2/q} \\
	\label{improve} &\lesssim \left( \frac{R}{\rho} \right)^{2a} c_{\alpha}(\mu_{\rho})^{2/q} \left\lVert \widecheck{\phi} \ast F_{\rho} \right\rVert_2 \left\lVert \widecheck{\phi} \ast G_{\rho} \right\rVert_2. \end{align} 
	By Fubini,
	\[  \left\lVert \widecheck{\phi} \ast F_{\rho} \right\rVert_1 \lesssim \left\lVert F_{\rho} \right\rVert_1. \]
	The measure of a ball of radius 1 intersected with the $\rho^{-1}$-neighbourhood of the hyperboloid is $\lesssim \rho^{-1}$, so 
	\[  \left\lVert \widecheck{\phi} \ast F_{\rho} \right\rVert_{\infty} \lesssim \rho^{-1} \left\lVert F_{\rho} \right\rVert_{\infty}. \]
	By interpolation, 
	\[  \left\lVert \widecheck{\phi} \ast F_{\rho} \right\rVert_2 \lesssim \rho^{-1/2} \left\lVert F_{\rho} \right\rVert_2 = \rho^{\frac{d-1}{2}} \left\lVert F_{\Box} \right\rVert_2. \]
	Using this and the inequality $c_{\alpha}(\mu_{\rho}) \lesssim \rho^{-\alpha}c_{\alpha}(\mu)$ gives  
	\begin{align*} \eqref{improve} &\lesssim R^{2a} c_{\alpha}(\mu)^{2/q} \rho^{-2a+d-1-\frac{2\alpha}{q}} \left\lVert F_{\Box}\right\rVert_2 \left\lVert G_{\Box}\right\rVert_2 \\
	&\ll R^{2a} c_{\alpha}(\mu)^{2/q} \left\lVert F_{\Box}\right\rVert_2 \left\lVert G_{\Box}\right\rVert_2, \end{align*}
	by the assumption on $a$. 
	
	For the first off-diagonal term, let $\phi_R$ be a smooth bump function equal to $1$ on $B(0,10CR)$ and vanishing outside a ball with the same centre and twice the radius (here $C$ is the same constant from Theorem \ref{wolffdecomp}). By H\"{o}lder's inequality,
	\[ \left\lvert \widehat{F_{\Box}} \widehat{G_{\Box_c}} \right\rvert^{q/2} = \left\lvert \widehat{F_{\Box}} \widehat{G_{\Box_c}} \ast \widehat{\phi_R} \right\rvert^{q/2} \lesssim \left\lvert \widehat{F_{\Box}} \widehat{G_{\Box_c}}  \right\rvert^{q/2} \ast \left\lvert \widehat{\phi_R} \right\rvert. \]
	Hence,  
	\begin{align*} &\left\lVert  \widehat{F_{\Box}} \widehat{G_{\Box_c}} \right\rVert_{L^{q/2}(\mu, \Box/2)} \\
	&\quad = \left( \int_{\Box/2}  \left\lvert \widehat{F_{\Box}} \widehat{G_{\Box_c}} \right\rvert^{q/2} \, d\mu \right)^{2/q}  \\
	&\quad \lesssim \left( \int_{\Box/2}  \left\lvert \widehat{F_{\Box}} \widehat{G_{\Box_c}} \right\rvert^{q/2} \ast \left\lvert \widehat{\phi_R} \right\rvert \, d\mu  \right)^{2/q}  \\
	&\quad \lesssim_N \left( \int_{\Box}  \left\lvert \widehat{F_{\Box}}(y) \widehat{G_{\Box_c}}(y) \right\rvert^{q/2} \left(\left\lvert \widehat{\phi_R} \right\rvert \ast \mu\right)(y) \, dy  \right)^{2/q} + R^{-N}  \left\lVert F \right\rVert_2 \left\lVert G\right\rVert_2,   \end{align*} 
	for arbitrarily large $N$. By H\"{o}lder's inequality, the first term satisfies 
	\begin{align} \notag  &\left( \int_{\Box}  \left\lvert \widehat{F_{\Box}}(y) \widehat{G_{\Box_c}}(y) \right\rvert^{q/2} \left(\left\lvert \widehat{\phi_R} \right\rvert \ast \mu\right)(y) \, dy  \right)^{2/q} \\
	\label{holder} &\quad \leq \left\lVert \widehat{F_{\Box}} \widehat{G_{\Box^c} } \right\rVert_{L^2(\Box)} \left\lVert \left\lvert \widehat{\phi_R} \right\rvert \ast \mu \right\rVert_{\frac{4}{4-q} }^{2/q}.  \end{align} 
	The function $\left\lvert \widehat{\phi_R} \right\rvert \ast \mu$ is essentially supported in a ball of radius $\sim 1$, has $L^{\infty}$ norm bounded by $R^{d-\alpha+O(\delta)} c_{\alpha}(\mu)$, and has $L^1$ norm $\lesssim c_{\alpha}(\mu)$. Therefore 
	\begin{align*} \left\lVert \left\lvert \widehat{\phi_R} \right\rvert \ast \mu \right\rVert_{\frac{4}{4-q} } &\leq R^{\frac{q}{4} \left( d-\alpha+O(\delta)\right)} c_{\alpha}(\mu).  \end{align*} 
	Combining this with the bound from Theorem \ref{wolffdecomp} for the first term gives
	\[ \eqref{holder} \lesssim R^{\frac{d-2}{4} + c\delta}  R^{\frac{1}{2} \left( d-\alpha+O(\delta)\right)} c_{\alpha}(\mu)^{2/q} \left\lVert F_{\Box}\right\rVert_2 \left\lVert G_{\Box}\right\rVert_2 \ll R^{2a} c_{\alpha}(\mu)^{2/q} \left\lVert F_{\Box}\right\rVert_2 \left\lVert G_{\Box}\right\rVert_2, \]
	where the last inequality comes from the assumption on $a$. The bound on the other three diagonal terms is similar, so putting this into \eqref{fourparts}, applying Cauchy-Schwarz and then Theorem \ref{wolffdecomp} finishes the proof. 
\end{proof} 

\subsection{A bilinear version of the Du-Zhang method}  

The following is the main result of the section, which implies the lower bounds for $\beta(\alpha,\mathbb{H}_{m}^{d-1})$ claimed in \eqref{bilinear}. The argument combines the Du-Zhang approach from \cite{DZ} with some recent observations about Lee's and Vargas' bilinear estimates for the hyperboloid from \cite{Ba}. It can also be adjusted to work when $d$ is odd and $m= \frac{d-1}{2}$, but in that case the inequality obtained is worse than the trivial bound.  

\begin{theorem} Assume that $1 \leq m < \frac{d-1}{2}$. 
Fix $\alpha \in \left( \frac{d-1}{2},\frac{d}{2} + 1\right)$ and $0< \delta \ll \epsilon$ and set $K = R^{\delta}$. Let $Y = \bigcup_{j=1}^M Q_j$ be an $\alpha$-dimensional collection of $K^{2}$-cubes in $B^{d}(0,R)$, with $\alpha$-dimensional constant $\gamma$. Also suppose that $\|Ef\|_{L^{\infty}(Q_j)}$ is dyadically constant over $Q_j \subseteq Y$. Then 
	\[ \|Ef\|_{L^{\infty}(Y)} \lesssim_{\epsilon} \gamma^{1/2} M^{-1/2}R^{\frac{\alpha+1}{4} - \frac{d}{8}+\epsilon} \|f\|_2. \]
\end{theorem}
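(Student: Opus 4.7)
The plan is to combine the Du--Zhang broad--narrow pigeonholing with the weighted bilinear estimate Theorem \ref{bilinearestimate}, adapted to the hyperbolic transversality condition studied in \cite{Ba}. For each $Q_j \subseteq Y$ I would fix a point $x_j \in Q_j$ realizing (up to a constant) the supremum $A := \|Ef\|_{L^\infty(Q_j)}$, which is dyadically constant across $j$. After decomposing $f = \sum_\tau f_\tau$ over $K^{-1}$-caps, a standard broad--narrow pigeonholing supplies, for each $j$, a pair $(\tau_1^{(j)}, \tau_2^{(j)})$ satisfying the hyperbolic transversality
\[
\inf_{\xi,\bar\xi \in \tau_1^{(j)},\, \eta,\bar\eta \in \tau_2^{(j)}} \bigl|\langle \xi-\eta,\, M(\bar\xi-\bar\eta)\rangle\bigr| \gtrsim K^{-2}
\]
of Theorem \ref{bilinearestimate}, together with the bilinear domination $|Ef(x_j)|^2 \lesssim K^{O(1)}|Ef_{\tau_1^{(j)}}(x_j)\, Ef_{\tau_2^{(j)}}(x_j)|$. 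Pigeonholing over $(\tau_1,\tau_2)$-pairs then fixes a single pair serving a $K^{-O(1)}$-fraction of the $Q_j$'s, at acceptable cost $K^{O(1)} = R^{O(\delta)}$.

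To feed this pointwise information into Theorem \ref{bilinearestimate} I would build a measure $\mu = \sum_j \psi_j$ on $B^d(0,R)$, where $\psi_j$ is a smooth mass-$1$ bump on a $K$-ball centered at $x_j$. Since $Ef_{\tau_i}$ has spatial envelope essentially constant on $K$-balls (its Fourier transform lives in a $K^{-1}$-cap), one has $\sum_j |Ef_{\tau_1}(x_j) Ef_{\tau_2}(x_j)| \lesssim \int |Ef_{\tau_1} Ef_{\tau_2}|\, d\mu$, and a direct check using the $\alpha$-dimensional structure of $Y$ gives $c_\alpha(\mu) \lesssim \gamma$ (after absorbing $R^{O(\delta)}$ losses; the degenerate case $\gamma \lesssim R^{-O(\delta)}$ reduces to the trivial bound $\|Ef\|_\infty \lesssim \|f\|_2$). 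Apply Theorem \ref{bilinearestimate} with $q=2$: in our range $\alpha > (d-1)/2 > (d-2)/2$ the second term $3d/8 - (\alpha+1)/4$ dominates the maximum, so $a = 3d/8 - (\alpha+1)/4 + \epsilon$ is admissible. Unwinding the rescaling between the theorem's formulation (measure on $B_1$, $L^2$-functions $F,G$ with $\widehat F,\widehat G$ supported in $\mathcal N_1(R\mathbb H_{m}^{d-1})$) and ours (measure on $B_R$, extension operator on the unit hyperboloid)---which contributes $R^{-2(d-1)}$ from spatial scaling of the integrand, $R^\alpha$ from rescaling $c_\alpha$, and $R^{d-1}$ from $\|F\|_2 \sim R^{(d-1)/2}\|f\|_2$---produces
\[
\int |Ef_{\tau_1} Ef_{\tau_2}|\, d\mu \;\lesssim_\epsilon\; R^{(\alpha+1)/2 - d/4 + \epsilon}\, c_\alpha(\mu)\, \|f_{\tau_1}\|_2 \|f_{\tau_2}\|_2,
\]
where the exponent on $R$ simplifies from $-2(d-1) + 2a + (d-1) + \alpha$ as required.

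Chaining the preceding inequalities (with $c_\alpha(\mu) \lesssim \gamma$, $\|f_{\tau_i}\|_2 \leq \|f\|_2$, and $K^{O(1)} = R^{O(\delta)} \leq R^\epsilon$) yields $A^2 M \lesssim R^{(\alpha+1)/2 - d/4 + O(\epsilon)}\, \gamma\, \|f\|_2^2$, and taking square roots gives the claimed bound. The main obstacle is the initial broad--narrow step: unlike the paraboloid, bilinear transversality for $\mathbb H_{m}^{d-1}$ is the quadratic-form condition involving $M$ rather than geometric separation, so significant caps can cluster near the null cone of $M$ without yielding a transverse pair. The observations of \cite{Ba} address exactly this, exploiting the hypothesis $m < (d-1)/2$ to ensure that the null cone has enough complementary directions to always extract a usable transverse pair throughout the range $\alpha \in ((d-1)/2,\, d/2+1)$; this is what allows the paraboloid-style bilinear analysis of \cite{E2} to be ported to the hyperbolic setting with the same final exponent.
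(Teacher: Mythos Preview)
Your overall strategy---broad--narrow dichotomy at scale $K$ followed by the weighted bilinear estimate in the broad case---matches the paper's, and your execution of the broad step (building a measure from the pointwise data, applying Theorem~\ref{bilinearestimate} with $q=2$, tracking the rescaling and the exponent) is essentially equivalent to the paper's application to the rescaled weighted Lebesgue measure on $10Y/R$.

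The genuine gap is the narrow case, which your sketch does not treat and whose mechanism you misattribute. You write that $m<\frac{d-1}{2}$ ``ensure[s] that the null cone has enough complementary directions to always extract a usable transverse pair,'' but this is not how the argument runs. In the paper a cube is declared $(m+2)$-broad when its significant $K^{-1}$-caps do not all have normals near an $(m+1)$-plane; from the resulting $m+2$ multilinearly transverse caps one passes to $K^{-10}$-subcaps $\tau_i^*$ and proves by an algebraic contradiction (expanding $\widetilde{\xi}_{m+1}$ in the near-orthogonal system $\widetilde{\xi}_1,\dots,\widetilde{\xi}_m,M\widetilde{\xi}_1,\dots,M\widetilde{\xi}_m$ and showing the residual component must lie in $\ker(I-M)$, hence is $O(K^{-2})$) that some pair satisfies the hyperbolic bilinear separation. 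That step is valid for every $m$ and does not use $m<\frac{d-1}{2}$. The remaining $(m+1)$-narrow cubes genuinely may admit no transverse pair; the paper handles them by the induction-on-scales scheme of Section~\ref{four} with $k=m+1$ and $p=p_{m,m+1}=\infty$ (trivial $L^\infty$ decoupling over the $\lesssim K^m$ caps, then parabolic rescaling), giving the narrow exponent $\frac{m+2-d+\alpha}{4}$ from \eqref{sbound2}. The hypothesis $m<\frac{d-1}{2}$ enters only here, to guarantee $\frac{m+2-d+\alpha}{4}\le\frac{\alpha+1}{4}-\frac{d}{8}$ so that the narrow bound does not exceed the bilinear one. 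Since your proposal contains no induction on $R$ and no separate treatment of narrow cubes, it is incomplete as written; incorporating the $(m+1)$-narrow induction (and correcting where the signature hypothesis is invoked) closes the argument.
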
 
\begin{proof} Suppose first that more than half of the cubes in $Y$ are $(m+2)$-broad. Since $\|Ef\|_{L^{\infty}(Q)}$ is dyadically constant as $Q$ varies we have 
	\begin{equation} \label{equidist2} \left\lVert Ef \right\rVert_{L^{\infty}(Y)} \leq M^{- 1/2   } \left( \sum_{Q \subseteq Y } \left\lVert Ef \right\rVert_{L^{\infty}(Q)}^2 \right)^{1/2}. \end{equation}
	
	Then for each $(m+2)$-broad cube $Q$, there exist significant caps $\tau_1, \dotsc, \tau_{m+2}$ at scale $K^{-1}$, depending on $Q$, such that 
	\[ \|Ef\|_{L^{\infty}(Q)} \leq K^{O(1)} \left\lVert Ef_{\tau_i} \right\rVert_{L^{\infty}(Q)}, \]
	for every $i \in \{1,\dotsc,m+2\}$, and such that 
	\begin{equation} \label{transversality} \left\lvert n(\xi_1) \wedge \dotsb \wedge n(\xi_{m+2}) \right\rvert \geq K^{-1}, \quad \text{for all } \xi_i \in \tau_i, \end{equation}
where $n(\xi)$ is the normal to the hyperboloid at $(\xi, \langle \xi, M \xi \rangle)$ and $M$ is the diagonal matrix associated to $\mathbb{H}^{d-1}_m$. For each $i$ let $\tau_i^*$ be a $K^{-10}$-cap inside $\tau_i$ such that 
	\[ \left\lVert Ef_{\tau_i} \right\rVert_{L^{\infty}(Q)} \leq K^{O(1)} \left\lVert Ef_{\tau_i^*} \right\rVert_{L^{\infty}(Q)}. \]
	Suppose for a contradiction that for all $i \neq j$, there exist $\xi \in \tau_i^*$ and $\eta \in \tau_j^*$ with 
	\[ \left\lvert \left\langle M(\xi-\eta), \xi-\eta \right\rangle \right\rvert \leq 100K^{-10}, \]
Since the $\tau_i^*$ are $K^{-10}$-caps, it follows that for all $i \neq j$,
\[ \left\langle M(\xi_i-\xi_j), \xi_i-\xi_j \right\rangle  = O(K^{-10}). \]
	for all $\xi_i \in \tau_i^*$. Fix any such tuple and let $\widetilde{\xi}_i = \xi_i - \xi_{m+2}$ for all $1 \leq i \leq m+1$. Then the previous estimates yield
	\[ \left\langle M(\widetilde{\xi}_i-\widetilde{\xi}_j), \widetilde{\xi}_i-\widetilde{\xi}_j \right\rangle  = O(K^{-10}), \quad 1 \leq i, j \leq m+1, \]
	and
\begin{equation} \label{equidist} \left\langle M\widetilde{\xi}_i, \widetilde{\xi}_i \right\rangle  = O(K^{-10}) , \quad 1 \leq i \leq m+1. \end{equation}
	Since $M$ is self-adjoint, subtraction gives 
	\begin{equation} \label{orthogonal} \left\langle M\widetilde{\xi}_i,  \widetilde{\xi}_j \right\rangle = O(K^{-10}), \quad 1 \leq i,j \leq m+1. \end{equation}
	By \eqref{transversality}, 
	\[ \left\lvert (\xi_1, 1) \wedge \dotsb \wedge (\xi_{m+2}, 1) \right\rvert \gtrsim K^{-1}, \]
	which by the property $v \wedge v = 0$ implies that 
	\begin{equation} \label{placeholder} \left\lvert \widetilde{\xi}_1 \wedge \dotsb \wedge \widetilde{\xi}_m \right\rvert \gtrsim \left\lvert \widetilde{\xi}_1 \wedge \dotsb \wedge \widetilde{\xi}_{m+1} \right\rvert \gtrsim K^{-1}. \end{equation}
	Expand out $\widetilde{\xi}_{m+1}$ in the near-orthogonal basis
	\[ \widetilde{\xi}_{m+1} = v_1 + v_2 + v_3, \]
	with  
	\[ v_1 \in \spn\left\{\widetilde{\xi}_i : 1 \leq i \leq m\right\}, \quad  v_2 \in \spn\left\{M\widetilde{\xi}_i : 1 \leq i \leq m \right\}, \]
	and 
	\[ v_3 \in \spn\left\{\widetilde{\xi}_i, M\widetilde{\xi}_i : 1 \leq i \leq m\right\}^{\perp}. \]
	More specifically, $v_3$ is the orthogonal projection of $\widetilde{\xi}_{m+1}$ onto 
	\[ \spn\left\{\widetilde{\xi}_i, M\widetilde{\xi}_i : 1 \leq i \leq m\right\}^{\perp}, \] and $(v_1+v_2)$ is the orthogonal projection of $\widetilde{\xi}_{m+1}$ onto 
	\[ \spn\left\{\widetilde{\xi}_i, M\widetilde{\xi}_i : 1 \leq i \leq m\right\}. \] Hence there exist constants $\lambda_i$ and $\mu_i$ such that 
	\[ v_1 = \sum_{i=1}^m \lambda_i \widetilde{\xi}_i, \quad v_2 = \sum_{i=1}^m \mu_i M\widetilde{\xi}_i. \]
	By \eqref{placeholder}, the coefficients $\lambda_i$ and $\mu_i$ satisfy 
	\begin{equation} \label{pause17} \left( \sum_{i=1}^m \left\lvert \lambda_i \right\rvert^2 \right)^{1/2} \lesssim K \left\lvert v_1 \right\rvert, \quad \left( \sum_{i=1}^m \left\lvert \mu_i \right\rvert^2 \right)^{1/2} \lesssim K \left\lvert v_2 \right\rvert, \end{equation}
	and so by Cauchy-Schwarz and \eqref{orthogonal},
	\[ \left\lvert \left\langle v_1, v_2 \right\rangle \right\rvert \lesssim K^{-8} \left\lvert v_1 \right\rvert \left\lvert v_2 \right\rvert, \]
	which gives (for $K$ larger than a fixed constant)
	\[ 1 \gtrsim \left\lvert v_1+v_2\right\rvert^2  \gtrsim \left\lvert v_1 \right\rvert^2 + \left\lvert v_2 \right\rvert^2. \]
Hence by \eqref{pause17} the coefficients $\lambda_i$ and $\mu_i$ all have size $\lesssim K$. By \eqref{placeholder}, $\left\lvert \widetilde{\xi}_i \right\rvert, \left\lvert M\widetilde{\xi}_i \right\rvert \gtrsim K^{-1}$. Hence by \eqref{orthogonal},
	\[ |v_2|^2 = \left\langle v_2, \widetilde{\xi}_{m+1} - v_1 \right\rangle \lesssim K^{-8}. \] 
	Similarly, the $v_3$ term satisfies
	\begin{equation} \label{v3bound} \left\lvert \left\langle v_3, Mv_3 \right\rangle\right\rvert = \left\lvert \left\langle \widetilde{\xi}_{m+1} - v_1, M(\widetilde{\xi}_{m+1} - v_1) \right\rangle\right\rvert + O(K^{-4}) = O(K^{-4}). \end{equation}
	The range of $(I-M)$ is $m$-dimensional and contains the $m$ vectors $\widetilde{\xi}_i - M\widetilde{\xi}_i$ for $1 \leq i \leq m$. These $m$ vectors are linearly independent since otherwise the identity 
	\[ w := \widetilde{\xi}_j -\sum_{i \neq j} a_i \widetilde{\xi}_i =  M\widetilde{\xi}_j -\sum_{i \neq j} a_i M\widetilde{\xi}_i, \]
for some $1 \leq j \leq m$ and constants $a_i$ would give, by \eqref{orthogonal} and \eqref{placeholder}, the contradiction
	\[ K^{-1} \sup_j(1, |a_j|) \lesssim |w|  \lesssim \sup_j(1, |a_j|)K^{-5}. \]
Hence $v_3 \in \ran(I-M)^{\perp} = \ker(I-M)$ since $M$ is self-adjoint, and \eqref{v3bound} gives $|v_3| \lesssim K^{-2}$. Hence
	\[ \widetilde{\xi}_{m+1} = v_1 + O(K^{-2}). \]
	Returning to the old coordinates gives 
	\[ \xi_{m+1} = \sum_{i=1}^m \lambda_i \xi_i +\left( 1- \sum_{i=1}^m \lambda_i  \right) \xi_{m+2}  + O(K^{-2}). \]
	Using this and the property $v \wedge v =0$ of the wedge product yields 
\[\left\lvert n(\xi_1) \wedge \dotsb \wedge n(\xi_{m+2}) \right\rvert = O(K^{-2}), \]
	which contradicts \eqref{transversality} (provided $K$ is large enough).  This contradiction proves that there exists a pair $(i,j)$ with $i \neq j$ such that 
	\[ \left\lvert \left\langle M(\xi-\eta), \xi-\eta \right\rangle \right\rvert \geq 100 K^{-10}, \]
	for every $\xi \in \tau_i^*$ and $\eta \in \tau_j^*$ with $i \neq j$. Assume without loss of generality that $i=1$ and $j=2$. Then since the $\tau_1^*, \tau_2^*$ are $K^{-10}$-caps, 
	\begin{equation} \label{bisep} \inf_{\substack{\xi, \overline{\xi} \in \tau_1^* \\
			\omega, \overline{\omega} \in \tau_2^* }} \left\lvert \left\langle M(\xi-\eta), \overline{\xi}-\overline{\eta} \right\rangle \right\rvert \geq K^{-10}. \end{equation}
	For each $Q \subseteq Y$ there are $\lesssim K^{O(1)}$ corresponding pairs of caps $(\tau_1^*, \tau_2^*)$ defined above satisfying \eqref{bisep}, so by pigeonholing \eqref{equidist2} and Bernstein's inequality, there exists a fixed pair $(\tau_1^*, \tau_2^*)$ such that
	\begin{align} \notag \left\lVert Ef \right\rVert_{L^{\infty}(Y)} &\leq M^{-1/2 } K^{O(1)} \left( \sum_{Q \subseteq Y } \left\lVert \left\lvert Ef_{\tau_1^*} Ef_{\tau_2^*} \right\rvert^{1/2} \right\rVert_{L^2(10Q)}^2 \right)^{1/2} \\
	\label{nearlyfinished} &\leq M^{- 1/2 } K^{O(1)} \left\lVert \left\lvert Ef_{\tau_1^*} Ef_{\tau_2^*} \right\rvert^{1/2} \right\rVert_{L^2(10Y)}. \end{align}
	Define $f_R$ so that $Ef_R(x) = Ef(Rx)$. We can now apply Theorem \ref{bilinearestimate} after a minor localisation argument (see for example Section 2.2 in \cite{Ba}). This yields
	\begin{align*} \left\lVert \left\lvert Ef_{\tau_1^*} Ef_{\tau_2^*} \right\rvert^{1/2} \right\rVert_{L^2(10Y)} &= R^{\frac{\alpha}{2}} \left\lVert \left\lvert Ef_{\tau_1^*,R} Ef_{\tau_2^*,R} \right\rvert^{1/2} \right\rVert_{L^2\left(\frac{10Y}{R}, R^{d-\alpha} \, dm \right)} \\
	&\lesssim_a R^{\frac{\alpha}{2}+a} \left\lVert Ef_{\tau_1^*,R} \right\rVert_{L^2(B(0,100))}^{1/2} \left\lVert Ef_{\tau_2^*,R} \right\rVert_{L^2(B(0,100))}^{1/2} \\
	&\lesssim R^{\frac{\alpha}{2}-\frac{d}{2}+a} \left\lVert Ef_{\tau_1^*} \right\rVert_{L^2(B(0,100R))}^{1/2} \left\lVert Ef_{\tau_2^*} \right\rVert_{L^2(B(0,100R))}^{1/2}   \\
	&\lessapprox R^{ \frac{\alpha+1}{4}  - \frac{d}{8}} \|f\|_2,  \end{align*}
where the assumption $\alpha>\frac{d-1}{2}$ was used to take the second term of the max in Theorem~\ref{bilinearestimate}. Putting this into \eqref{nearlyfinished} gives 
	\[ \left\lVert Ef \right\rVert_{L^{\infty}(Y)} \lessapprox  M^{-1/2  } R^{\frac{\alpha+1}{4}  - \frac{d}{8}} \|f\|_2, \]
	which proves the inequality in the broad case.  The narrow case is no different than what we saw in Section \ref{four}. In particular from \eqref{eq:narrowDecEq} and the assumption $m < \frac{d-1}{2}$, the exponent $\frac{\alpha+1}{4}  - \frac{d}{8}$ is at least as large as the best possible that passes through $(m+1)$-narrow ($L^{\infty}$) decoupling (use $k=m+1$ in \eqref{sbound2}), so this finishes the proof. \end{proof}

\begin{cor} Assume that $1 \leq m < \frac{d-1}{2}$ and fix $\alpha \in \left(\frac{d-1}{2},\frac{d}{2} + 1\right)$. Then for any $\alpha$-dimensional set $X$ of unit cubes in $B^d(0,R)$, 
	\[ \|Ef\|_{L^2(X)} \lesssim_{\epsilon} R^{\frac{\alpha+1}{4} - \frac{d}{8}+\epsilon} \|f\|_2, \] 
and so
\[ \beta(\alpha, \mathbb{H}^{d-1}_m) \geq \frac{\alpha}{2} + \frac{d}{4} - \frac{1}{2}, \quad \alpha \in \left(\frac{d-1}{2} ,\frac{d}{2} + 1\right). \]
By Proposition \ref{prop:upperBound} this is sharp if $d \geq 4$ is even, $m = \frac{d}{2} -1$ and $\alpha \in (d-m-1,d-m)$. Consequently \[ \beta(\alpha, \mathbb{H}^{d-1}_m) = \frac{\alpha}{2} + \frac{d}{4} - \frac{1}{2}, \quad \alpha \in \left( d-m-1, d-m\right), \quad d \text{ even and } m = \frac{d}{2} -1. \]  
\end{cor}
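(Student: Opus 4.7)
The plan is to deduce the corollary from the preceding theorem by standard dyadic pigeonholing, and then convert the resulting weighted restriction estimate into a lower bound for $\beta$ using the equivalences from Section \ref{preliminaries}.

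First, I would cover $B^d(0,R)$ by $K^2$-lattice cubes with $K = R^\delta$ and $\delta \ll \epsilon$, and let $\mathcal{Q}^*$ be the collection of those $K^2$-cubes that meet $X$. Because $X$ is $\alpha$-dimensional, the $\alpha$-dimensional constant of $\mathcal{Q}^*$ is $O(1)$ and each $Q \in \mathcal{Q}^*$ contains at most $CK^{2\alpha}$ unit cubes of $X$. Estimating $L^2$ by $L^\infty$ cube-by-cube gives
\[ \|Ef\|_{L^2(X)}^2 \;\leq\; CK^{2\alpha}\sum_{Q \in \mathcal{Q}^*}\|Ef\|_{L^\infty(Q)}^2. \]

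Second, I would partition $\mathcal{Q}^*$ into $O((\log R)^c)$ dyadic classes on which $\|Ef\|_{L^\infty(Q)}$, the cardinality $M$, and the $\alpha$-dimensional constant $\gamma$ of the resulting subcollection $Y$ are all dyadically constant. Each such $Y$ then satisfies the hypotheses of the preceding theorem, which yields
\[ \|Ef\|_{L^\infty(Y)} \;\lesssim_\epsilon\; \gamma^{1/2}M^{-1/2}R^{s+\epsilon}\|f\|_2, \qquad s := \tfrac{\alpha+1}{4}-\tfrac{d}{8}. \]
Squaring and multiplying by $M$ (valid since the $L^\infty$ norms are dyadically constant on $Y$) produces $\sum_{Q\in Y}\|Ef\|_{L^\infty(Q)}^2 \lesssim \gamma R^{2s+2\epsilon}\|f\|_2^2 \lesssim R^{2s+2\epsilon}\|f\|_2^2$. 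Summing over the $O((\log R)^c)$ classes and absorbing the $K^{2\alpha} = R^{2\alpha\delta}$ loss into the $\epsilon$ exponent (possible since $\delta \ll \epsilon$) gives the claimed $L^2(X)$ bound.

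Third, Lemma \ref{lem:latticeCube} combined with Proposition \ref{prop:equiv} converts this into $\beta(\alpha, \mathbb{H}_m^{d-1}) \geq \alpha - 2s = \frac{\alpha}{2} + \frac{d}{4} - \frac{1}{2}$ on the stated range. For the sharpness claim, in the even case $d \geq 4$ with $m = \frac{d}{2}-1$ one has $d-2m = 2$ and $d-m-1 = \frac{d}{2}$, so Proposition \ref{prop:upperBound} furnishes the matching upper bound $\alpha - \frac{\alpha-m}{2} = \frac{\alpha}{2} + \frac{d}{4} - \frac{1}{2}$ on $\alpha \in (d-m-1, d-m) = (\frac{d}{2}, \frac{d}{2}+1)$, which lies inside $(\frac{d-1}{2}, \frac{d}{2}+1)$, giving equality there.

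I do not expect any serious obstacle in this deduction: every quantity being pigeonholed lies in a polynomial range of $R$, so only $O(\log R)$ dyadic scales contribute, and all the losses ($K^{2\alpha}$, $\log R$ factors, the $R^\epsilon$ from the theorem) fit comfortably within the $\epsilon$ budget. The substantive work has already been done in proving the $L^\infty$ estimate of the preceding theorem; here the only care needed is to bookkeep the pigeonholing so that the dyadic constancy hypothesis of that theorem is genuinely in force.
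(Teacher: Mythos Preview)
Your deduction is correct and is precisely the argument the paper has in mind: the corollary is stated without proof, but the passage from the preceding $L^{\infty}$ theorem to the $L^2(X)$ bound is the standard pigeonholing step the paper already spelled out after Proposition~\ref{prop:mainInductive} (``After pigeonholing and using H\"older's inequality\ldots''), and your conversion to a lower bound on $\beta$ via Lemma~\ref{lem:latticeCube} and Proposition~\ref{prop:equiv}, together with the arithmetic check against Proposition~\ref{prop:upperBound}, matches the paper exactly.
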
 

\begin{remark} Since we are working in $L^{\infty}$, our argument does not require the more sophisticated decoupling result in Proposition \ref{prop:narrowDec}. In particular the decoupling estimate we use in this case is a simple consequence of the Cauchy-Schwarz inequality. 
\end{remark}

 \section*{Appendix: Equivalence between decay and localized restriction estimates }
 
 In this appendix we explain how to prove Proposition \ref{prop:equiv} and Lemma \ref{lem:latticeCube}. These results are not new but we include arguments for the convenience of the reader. 
 
\begin{proof}[Proof of Proposition \ref{prop:equiv}] We only show that $s_{d}(\alpha) \leq \frac{\alpha - \beta(\alpha, \mathbb{H}^{d-1}_m) }{2},$ since the other inequality is more well-known and easily follows from duality and Plancherel (see for example \cite{DGOWWZ}). Our argument is essentially the same as in \cite{bbcr,R,H}, but we include the details since we are working with localized $s_{d}(\alpha)$ and the identity is different than in \cite{bbcr,R,H}. There is nothing special about the geometry of $\mathbb{H}_{m}^{d-1}$, and the argument will work for any smooth hypersurface.  
 
 The idea is to use $\|Ef\|_{L^1(d\nu)}$ bounds to control the measure of level sets 
\[ \mu_R \{|Ef| > \lambda \}, \]
 and use these estimates to bound $\|Ef\|_{L^2(\mu_R)}.$ Note that for any Borel measure $\nu$ supported on $B^{d}(0,R)$ we have, by duality, Plancherel, and Cauchy-Schwarz, \begin{equation}\label{eq:weightedL1est}|\int Ef d\nu| \lesssim \|f\|_{L^2 (\mathbb{H}_{m}^{d-1} )} \bigg( \int_{\mathbb{H}_{m}^{d-1}} |\widehat{\nu}(\xi)|^2 d\sigma(\xi) \bigg)^{\frac{1}{2}}.\end{equation} 
 
 \noindent For each $\lambda > 0$ we let $\mu_{R, \lambda}$ be the measure on $B_R$ defined by $$\mu_{R,\lambda}(F) = \frac{ \mu_R(F \cap \{|Ef| > \lambda \} )  }{\mu_R (\{|Ef| > \lambda \})  } = \frac{ \mu(R^{-1} (F \cap \{|Ef| > \lambda \} ))  }{\mu (R^{-1}\{|Ef| > \lambda \})  }.$$ We assume for now that $\mu_R$ is supported on the set where $Ef > 0$. Make the normalization $\|f\|_{L^2} = 1$ and apply \eqref{eq:weightedL1est} with $\nu = \mu_{R,\lambda}$ to obtain $$\int |Ef| d\mu_{R, \lambda} \lesssim \bigg(\int_{\mathbb{H}_{m}^{d-1}} |\widehat{\mu_{R, \lambda}}(\xi)|^2 d\sigma(\xi) \bigg)^{\frac{1}{2}}.$$ 
 
 \noindent Now let $\nu$ be the probability measure $$\nu = \frac{\mu|_{R^{-1} \{|Ef| > \lambda \} } }{ \mu(R^{-1} \{ |Ef| > \lambda \} ) }.$$ Then $$ \widehat{\mu_{R, \lambda}}(\xi) = \int_{\R^d} e^{-2\pi i (Rx\cdot \xi )}  d\nu(x) = \widehat{\nu}(R\xi),$$ and therefore we obtain  \begin{align}\label{eq:betaEst} \int |Ef| d\mu_{R, \lambda} &\leq \bigg(\int_{\mathbb{H}_{m}^{d-1}} |\widehat{\nu}(R\xi)|^2 d\sigma(\xi) \bigg)^{\frac{1}{2}} \\ \nonumber &\lesssim R^{-\frac{\beta}{2}} c_{\alpha}(\nu)^{\frac{1}{2}} \end{align} for any $\beta < \beta(\alpha, \mathbb{H}^{d-1}_m)$. 
 
 We claim that \begin{equation} \label{eq:nuEst} c_{\alpha}(\nu) \lesssim R^{\alpha}c_{\alpha}(\mu_{R}) \mu(R^{-1}\{|Ef| > \lambda \} )^{-1}.
 \end{equation} Indeed, let $B_r$ be a ball of radius $r$ such that $c_{\alpha}(\nu) \sim \frac{\nu(B_r)}{r^{\alpha}}.$ Then \begin{align*} c_{\alpha}(\nu) &\sim \frac{ \mu( B_r \cap R^{-1}\{|Ef| > \lambda \} ) }{r^{\alpha}} \mu(R^{-1}\{|Ef| > \lambda \})^{-1} \\ &= \frac{ \mu(R^{-1}( B_{Rr} \cap \{|Ef| > \lambda \} )) }{r^{\alpha}} \mu(R^{-1}\{|Ef| > \lambda \})^{-1} \\ &=R^{\alpha} \frac{\mu_{R, \lambda} (B_{Rr})}{(rR)^{\alpha}} &\\ &\leq R^{\alpha}c_{\alpha}(\mu_{R, \lambda}) \\& \leq R^{\alpha} c_{\alpha}(\mu_R) \mu_R(\{|Ef| > \lambda \})^{-1},  \end{align*} proving \eqref{eq:nuEst}. 
 
 It follows from \eqref{eq:betaEst}, \eqref{eq:nuEst}, and the definition of $\mu_{R,\lambda}$ that $$\lambda \lesssim R^{\frac{\alpha - \beta}{2}} \mu_R(\{|Ef| > \lambda \})^{-\frac{1}{2}}  c_{\alpha}(\mu_R)^{\frac{1}{2}},$$ and therefore \begin{equation}\label{eq:levelSet} \mu_R(\{|Ef| > \lambda \}) \lesssim \lambda^{-2} R^{\alpha - \beta} c_{\alpha}(\mu_R). 
 \end{equation} Recall that $$\int |Ef|^2 d\mu_{R} = 2 \int_{0}^{\infty} \lambda \mu_R (\{|Ef| > \lambda \}) d\lambda = 2 \int_{0}^{C} \lambda \mu_R (\{|Ef| > \lambda \}) d\lambda,$$ the latter identity following from the normalization $\|f\|_{L^2} =1$. Then $$\int_{0}^{R^{-\frac{\beta}{2}}} \lambda \mu_R (\{|Ef| > \lambda\}) d\lambda \leq R^{\alpha-\beta} $$ and as a consequence of \eqref{eq:levelSet} we also obtain \begin{align*}\int_{R^{-\frac{\beta}{2} }}^{C} \lambda \mu_R (\{|Ef| > \lambda\}) d\lambda &\lesssim c_{\alpha}(\mu_R) R^{\alpha - \beta} \int_{R^{-\frac{\beta}{2}}}^{C} \lambda^{-1} d\lambda \\ &\lesssim_{\delta} c_{\alpha}(\mu_R) R^{\alpha - \beta + \delta}  \end{align*} for any $\delta > 0$. 
 
 We have therefore shown that $$\|Ef\|_{L^2 (\mu_R )} \lesssim_{\delta} c_{\alpha}(\mu_R)^{\frac{1}{2}} R^{\frac{\alpha - \beta + \delta}{2}}\|f\|_{L^2}$$ for any $\delta > 0$ and for any $\beta < \beta(\alpha, \mathbb{H}^{d-1}_m)$ when $\mu_R$ is supported in the set where $Ef > 0$. In the general case we decompose $$Ef = g_1 - g_2 + i(g_3 - g_4), \ \ \  \ g_j \geq 0$$ and let $G_{j}$ denote the support of $g_j$. Then we set $\mu_{R}^{j} = \mu_{R}|_{G_j}$ and repeat the above argument to obtain \begin{align*}\|Ef\|_{L^2 (\mu_R )}  &\lesssim_{\delta} \big(\sum_{j=1}^{4} c_{\alpha}(\mu_{R}^{j}) \big)^{\frac{1}{2}}R^{\frac{\alpha - \beta + \delta}{2}}\|f\|_{L^2} \\ &\lesssim_{\delta} c_{\alpha}(\mu_{R})^{\frac{1}{2}}R^{\frac{\alpha - \beta + \delta}{2}}\|f\|_{L^2}.  \end{align*} It follows that $$s_{d}(\alpha) \leq \frac{\alpha - \beta}{2} + \frac{\delta}{2}$$ for any $\delta > 0$ and $\beta < \beta(\alpha, \mathbb{H}^{d-1}_m)$. Letting $\beta \rightarrow \beta(\alpha, \mathbb{H}^{d-1}_m)$ and $\delta \rightarrow 0$ completes the proof. \end{proof} 
 
\begin{proof}[Proof of Lemma \ref{lem:latticeCube}] We first show $s_{d}(\alpha) \geq \overline{s_d}(\alpha).$  Fix $\delta > 0$ and choose $s < s_{d}(\alpha)$ such that for all $\alpha$-dimensional $\mu$ supported in $B^{d}(0,1)$ $$\|Ef\|_{L^2 (\mu_R; B^{d}(0,R))} \lesssim_{\delta} c_{\alpha}(\mu_R)^{\frac{1}{2}} R^{s + \delta} \|f\|_{L^2}$$ whenever supp$(f) \subset B^{d-1}(0,2).$ 

Let $X$ be a union of lattice unit cubes in $B^{d}(0,R)$ such that $\gamma_{\alpha}(X) \sim 1$. We define a measure $\mu$ on $B^{d}(0,1)$ such that $\mu(E\cap R^{-1} Q) = \frac{|RE \cap Q|}{\# X}$ if $Q \in X$ and such that $\mu(L) = 0$ for every other lattice $R^{-1}$-cube $L$. Then $\mu$ is a probability measure. For any ball $B_r$ with $r \geq R^{-1}$ we have $$\frac{\mu(B_r)}{r^{\alpha}} \lesssim \frac{1}{\# X} \frac{\#\{Q \in X : Q \subset B_{cRr} \}}{r^{\alpha}} = \frac{R^{\alpha}}{\# X} \frac{\#\{Q \in X : Q \subset B_{cRr} \}}{(Rr)^{\alpha}}.$$ Now $\frac{R^{\alpha}}{\# X} \lesssim 1$ since $\gamma^{d}_{\alpha}(X) \sim 1$ and therefore we can conclude that $$\frac{\mu(B_r)}{r^{\alpha}} \lesssim 1 $$ when $r \geq R^{-1}$. On the other hand if $r < R^{-1}$ then $Rr < 1$ and we have $$\mu(B_r) \lesssim R^{-\alpha} (Rr)^d \lesssim R^{-\alpha}(Rr)^{\alpha} = Cr^{\alpha}.$$ It follows that  $$c_{\alpha}(\mu) \lesssim 1$$ and so $\mu$ is $\alpha$-dimensional. As a consequence $$\|Ef\|_{L^2(\mu_R; B^{d}(0,R))} \lesssim_{\delta} R^{s + \delta} c_{\alpha}(\mu_R)^{\frac{1}{2}}\|f\|_{L^2}$$ whenever supp$(f) \subset B^{d-1}(0,2)$. From the definition of $\mu_R$ it follows that $$\|Ef\|_{L^2(X)} \lesssim_{\delta} \big(\frac{\# X}{R^{\alpha}} c_{\alpha}(\mu_R) \big)^{\frac{1}{2}} R^{s+\delta}\|f\|_{L^2}.$$ Finally we choose $r > 0$ and a ball $B_r$ such that $c_{\alpha}(\mu_R) \sim \frac{\mu_R (B_r)}{r^{\alpha}}.$ If $r \geq 1$ we have $$\frac{\# X}{R^{\alpha}} c_{\alpha}(\mu_R) \sim \#X \cdot  \frac{\mu(R^{-1} B_r) }{r^{\alpha}} \lesssim \frac{\#\{Q: Q \subset B_{cr}\} }{r^{\alpha}} \lesssim \gamma^{d}_{\alpha}(X).$$ On the other hand, if $r < 1$ then we can directly estimate $$\frac{\# X}{R^{\alpha}}c_{\alpha}(\mu_R) \sim \# X \cdot \frac{\mu(R^{-1}B_{r})}{r^{\alpha}} \lesssim \frac{|B_{r}|}{r^{\alpha}} \lesssim 1 \lesssim \gamma^{d}_{\alpha}(X).$$ It follows that $$\overline{s_d}(\alpha) \leq s + \delta < s_{d} (\alpha) + \delta.$$ Letting $\delta \rightarrow 0$ completes the proof.

To prove the reverse inequality one uses the fact that $|Ef|$ is essentially constant on cubes of side-length 1 (since $f$ is supported in a ball of radius 2) along with a standard pigeonholing argument to discretize the measure (see for example \cite{DZ}). \end{proof}

\end{document}